\documentclass[a4paper,12pt]{amsart}
\usepackage[mathscr]{eucal}
\usepackage{amssymb}
\usepackage{latexsym}
\usepackage{enumerate}

\theoremstyle{plain}
\newtheorem{thm}{Theorem}[section]
\newtheorem{prop}[thm]{Proposition}
\newtheorem{lem}[thm]{Lemma}
\newtheorem{cor}[thm]{Corollary}
\newtheorem{conj}[thm]{Conjecture}
\theoremstyle{definition}
\newtheorem{defn}[thm]{Definition}
\newtheorem{rem}[thm]{Remark}

\textwidth 150mm
\textheight 230mm

\evensidemargin\paperwidth
\advance\evensidemargin-\textwidth
\oddsidemargin.5\evensidemargin
\advance\oddsidemargin-1in
\evensidemargin\oddsidemargin

\topmargin\paperheight
\advance\topmargin-\textheight
\topmargin.5\topmargin
\advance\topmargin-1in

\pagestyle{headings}

\begin{document}

\title[Fixed point property and bounded cohomology]{Fixed point properties and second bounded cohomology of  universal lattices on Banach spaces}
\author{Masato MIMURA}
\date{}
\thanks{The author is 
supported by JSPS Research Fellowships for Young Scientists No.20-8313. }

\maketitle

\begin{abstract}
Let $B$ be any $L^p$ space for $p\in (1,\infty)$ or any Banach space isomorphic to a Hilbert space, and $k \geq 0$ be integer. We show that if $n\geq 4$, then the universal lattice $\Gamma =SL_n (\mathbb{Z}[x_1, \ldots , x_k])$ has property $(\mathrm{F}_B)$ in the sense of Bader--Furman--Gelander--Monod. Namely, any affine isometric action of $\Gamma$ on $B$ has a global fixed point. The property of having $(\mathrm{F}_B)$ for all $B$ above is known to be strictly stronger than Kazhdan's property $(\mathrm{T})$. We also define the following generalization of property $(\mathrm{F}_B)$ for a group: the boundedness property of all affine quasi-actions on $B$. We name it property $(\mathrm{FF}_B)$ and prove that the group $\Gamma$ above also has this property modulo trivial part. The conclusion above implies that the comparison map in degree two $H^2 _b (\Gamma; B) \to H^2(\Gamma; B)$ from bounded to ordinary cohomology is injective, provided that the associated linear representation does not contain the trivial representation.

\end{abstract}

\section{\textbf{Introduction and main results}}\label{sec:Intro}
Kazhdan's property (T), which was first introduced in \cite{Kazh}, represents 
certain forms of \textit{rigidity} of a group, and now plays an important role in wide range of mathematical fields: for instance, see a book of Bekka--de la Harpe--Valette \cite{BHV}. Property (T) is 
initially defined in terms of unitary representations. 
Recall that a group $\Gamma$ is defined to have \textit{property} $(T)$ 
if any unitary representation 
$(\pi , \mathcal{H})$ of $\Gamma$ does not admit almost invariant vectors 
in $\mathcal{H}_{\pi (\Gamma)} ^{\perp}$, which means the orthogonal complement of the subspace of all $\pi (\Gamma)$-invariant vectors. Here a representation $\rho$ of a group $\Gamma$ on a Banach space $B$ is said to admit 
\textit{almost invariant vectors} if for any compact subset  
$F\subset \Gamma$ and any $\varepsilon >0$, there exists a vector $\xi$ in the unit sphere $S(B)$ of $B$ 
such that $\sup_{s \in F} \| \rho (s)\xi -\xi \| \leq \varepsilon$ holds.   P. Delorme \cite{De} and A. Guichardet \cite{Gu} 
have shown that for any locally compact and second countable group $\Gamma $, 
property (T) is equivalent to Serre's \textit{property} $(FH)$: 
a group $\Gamma$ is said to have (FH) if any affine isometric action of $\Gamma$ on a 
Hilbert space has a global fixed point.

In 2007, Bader--Furman--Gelander--Monod \cite{BFGM} investigated similar 
properties in the broader framework of general Banach spaces $B$. 
They named 
the Kazhdan type property and the fixed point property respectively 
$\mathit{(T_B)}$ and $\mathit{(F_B)}$. (We will recall the precise definitions in Section~\ref{sec:Pre}.) 
For convenience, we shall use the following symbols for certain classes of Banach spaces. 
\begin{itemize}
   \item  The symbol $L^p$ denotes the class of all $L^p$ spaces (on arbitrary measures). 
   \item  The symbol $[\mathcal{H}]$ denotes the class of all Banach spaces which are isomorphic to Hilbert spaces (, namely, which have compatible norms to ones of Hilbert spaces). 
\end{itemize}
In \cite{BFGM}, Bader--Furman--Gelander--Monod proved the following theorem and revealed that $(\mathrm{F}_B)$ is 
stronger than $(\mathrm{T}_B)$ in general. 
We note that the statement for $[\mathcal{H}]$ in $(iii)$ in the theorem is due to Y. Shalom  in his unpublished work. (We mention that the original statement requires $\sigma$-finiteness of measure spaces in $(ii)$, but the argument in \cite[\S 4.b]{BFGM} works for general cases.)
\begin{thm}\label{thm:BFGM}$($\cite[Theorem A and Theorem B]{BFGM}$)$
Let $G$ be a locally compact and second countable group. 
\begin{enumerate}[$\mathrm{(}$i$\mathrm{)}$]
   \item  For any  Banach space $B$, 
        property $(\mathrm{F}_B)$ implies property $(\mathrm{T}_B)$.
   \item  Property $(\mathrm{T})$ is equivalent to property 
   $(\mathrm{T_{\mathit{L^p}}})$, where $p \in (1, \infty)$. 
   It is also equivalent to 
    property $(\mathrm{F_{\mathit{L^p}}})$, 
    where $p \in (1, 2]$.   
    \item  Suppose that $G= \Pi_{i=1}^{m} \mathbf{G}_i(k_i)$, where $k_i$ are 
       local fields and $\mathbf{G}_i(k_i)$ are $k_i$-points of Zariski 
       connected simple $k_i$-algebraic groups. If each simple factor 
       $\mathbf{G}_i(k_i)$ has $k_i$-rank $\geq 2$, 
       then $G$ and the lattices in $G$ have property 
       $(\mathrm{F_{\mathit{L^p}}})$ for $1<p<\infty$ and 
       property $(\mathrm{F}_{[\mathcal{H}]})$.
\end{enumerate}
\end{thm}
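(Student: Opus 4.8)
\smallskip
\noindent The three assertions can be approached as follows; $(i)$ and $(ii)$ follow the Hilbert-space template, while $(iii)$ rests on the structure theory of higher-rank groups.

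\smallskip
\noindent\emph{Part $(i)$.} The plan is to prove the contrapositive, i.e.\ to run the ``Delorme direction'' of the Delorme--Guichardet theorem over Banach spaces. If $G$ lacks $(\mathrm{T}_B)$, some continuous isometric representation $\rho$ on $B$ has almost invariant vectors but no nonzero invariant vector; fix exhausting compacta $F_n\subset G$, scalars $\varepsilon_n\downarrow0$, and unit vectors $\xi_n$ with $\sup_{s\in F_n}\|\rho(s)\xi_n-\xi_n\|\le\varepsilon_n$. The inner $1$-cocycles $b_n(s)=t_n\bigl(\rho(s)\xi_n-\xi_n\bigr)$, with $t_n\uparrow\infty$ chosen by a diagonal procedure over a countable dense subset of $G$ (here one uses that the $\xi_n$ are not genuinely invariant), can be made to satisfy $\sup_n\|b_n(s)\|<\infty$ for each fixed $s$ while $\|b_n(s_0)\|\to\infty$ for some $s_0$. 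Assembling the affine isometric actions $(\rho,b_n)$ into one affine isometric action on a Banach space built isometrically from copies of $B$ --- an ultrapower, or an $\ell^2$- or $\ell^p$-direct sum, which stays in the relevant class when $B$ is an $L^p$ space or Hilbertian --- yields an action with unbounded orbits, hence without a fixed point, contradicting $(\mathrm{F}_B)$.

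\smallskip
\noindent\emph{Part $(ii)$.} For $(\mathrm{T})\Leftrightarrow(\mathrm{T}_{L^p})$, $1<p<\infty$, the substantive implication is $(\mathrm{T})\Rightarrow(\mathrm{T}_{L^p})$, which I would prove by transporting spectral gaps along the Mazur map $M_{p,2}\colon L^p\to L^2$, $f\mapsto|f|^{p/2}\operatorname{sgn}(f)$. By the Banach--Lamperti description of surjective isometries of $L^p$ ($p\ne2$) as weighted composition operators, $M_{p,2}$ intertwines the isometry group of $L^p$ with a group of isometries of $L^2$; being a bijection fixing $0$ which is, together with its inverse, uniformly continuous on bounded sets with modulus depending only on $p$, it matches almost invariant vectors with almost invariant vectors and invariant vectors with invariant vectors. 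Thus a uniform spectral gap for unitary representations (property $(\mathrm{T})$) forces one for isometric $L^p$-representations; running the correspondence backwards, and using that Hilbert spaces are $L^2$-spaces so $(\mathrm{T}_B)=(\mathrm{T})$ for Hilbertian $B$, gives $(\mathrm{T}_{L^p})\Rightarrow(\mathrm{T})$. For $(\mathrm{T})\Leftrightarrow(\mathrm{F}_{L^p})$ with $p\in(1,2]$, one direction is $(\mathrm{F}_{L^p})\Rightarrow(\mathrm{T}_{L^p})\Rightarrow(\mathrm{T})$ by part~$(i)$ and the above. For the converse --- Shalom's argument --- I would use that for $0<p\le2$ the kernel $(f,g)\mapsto\|f-g\|_p^p$ on $L^p$ is conditionally negative definite (via $p$-stable random variables and Schoenberg's theorem), so the associated map $\Phi\colon L^p\to\mathcal{H}$ satisfies $\|\Phi(f)-\Phi(g)\|^2=\|f-g\|_p^p$ and --- the kernel being invariant under all surjective isometries of $L^p$, translations included --- intertwines the full isometry group of $L^p$ with a group of affine isometries of $\mathcal{H}$. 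An affine isometric $G$-action on $L^p$ then induces one on $\mathcal{H}$ along $\Phi$; by $(\mathrm{FH})=(\mathrm{T})$ the induced orbit of $\Phi(0)$ is bounded, so the $L^p$-orbit of $0$ is $\|\cdot\|_p$-bounded, and a bounded $G$-orbit in the uniformly convex space $L^p$ ($1<p<\infty$) has a unique Chebyshev center, which is $G$-fixed.

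\smallskip
\noindent\emph{Part $(iii)$.} By part~$(ii)$ (and \cite{BHV}) $G=\prod_i\mathbf{G}_i(k_i)$ has property $(\mathrm{T})$, hence $(\mathrm{T}_{L^p})$ for $1<p<\infty$. Given an affine isometric $G$-action $\alpha$ on $B$, where $B$ is an $L^p$ space or $B\in[\mathcal{H}]$, I would follow Shalom's higher-rank scheme, extended by Bader--Furman--Gelander--Monod to uniformly convex and $[\mathcal{H}]$-targets. Since each factor has $k_i$-rank $\ge2$, $G$ is boundedly generated by finitely many subgroups $Q_j$, each of the prototypical form $SL_2(k)\ltimes k^2$ with $U_j$ the normal $k^2$, such that the pair $(Q_j,U_j)$ has relative property $(\mathrm{T})$. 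For each $j$ one shows that $\alpha|_{Q_j}$ has a $U_j$-fixed point: reflexivity of $B$ gives a $Q_j$-equivariant splitting $B=B^{\rho(U_j)}\oplus B''$ on which relative $(\mathrm{T})$ yields a spectral gap for $\rho(U_j)|_{B''}$, so the $B''$-component of the cocycle is bounded, hence a $U_j$-coboundary; the $B^{\rho(U_j)}$-component is a homomorphism $U_j\to B^{\rho(U_j)}$ which a Mautner-type argument --- using that the $SL_2(k)$-module $k^2$ has no invariant functional --- shows is a coboundary as well. A Mautner-type propagation along the parabolic normalizers of the $U_j$ then gives $\bigcap_j\mathrm{Fix}(\alpha|_{U_j})\ne\varnothing$, and since the $U_j$ generate $G$, every point of this intersection is $G$-fixed (the vanishing of homomorphisms $G\to\mathbb{R}$, a consequence of $(\mathrm{T})=(\mathrm{FH})$, being absorbed here). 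Lattices $\Lambda<G$ inherit the property by inducing affine actions from $\Lambda$ to $G$: the induced action lives on $L^p(G/\Lambda,B)$, which stays in the class $L^p$ (resp.\ $[\mathcal{H}]$) since $G/\Lambda$ carries a finite invariant measure; applying the result for $G$ and evaluating the resulting equivariant section yields a $\Lambda$-fixed point. For $B\in[\mathcal{H}]$ one runs the same scheme, following Shalom's unpublished work, with $\alpha$ viewed as a uniformly bounded affine action on an ambient Hilbert space and the $L^p$-specific tools replaced by the superreflexivity of $B$.

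\smallskip
\noindent The main obstacle is Part~$(iii)$: redoing, over $L^p$ and $[\mathcal{H}]$, the Hilbert-space proof of vanishing of reduced first cohomology in higher rank. Orthogonal complements are gone, so the projection onto invariant vectors must be extracted from reflexivity or uniform convexity by a mean-ergodic/Chebyshev-center argument; the relative fixed-point property for the subgroups $SL_2(k)\ltimes k^2$ must be re-established in the Banach category; and one must check that bounded orbits genuinely produce fixed points in these spaces. The analogous but far milder difficulty in Part~$(ii)$ is verifying the equivariance and the uniform-homeomorphism estimates for the Mazur map, together with the conditional negative definiteness of $\|\cdot\|_p^p$ for $p\le2$.
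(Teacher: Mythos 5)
Note first that the present paper does not prove Theorem~\ref{thm:BFGM}; it is quoted from \cite{BFGM} (Theorems~A and~B, with the $[\mathcal{H}]$-part of $(iii)$ credited to unpublished work of Shalom). So the comparison below is with the argument in \cite{BFGM}, not with anything in this paper.

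Your Part~$(i)$ has a real gap. The assertion $(\mathrm{F}_B)\Rightarrow(\mathrm{T}_B)$ is for one \emph{fixed} Banach space $B$, but your construction produces a fixed-point-free affine action on an $\ell^p$-direct sum or an ultrapower of copies of $B$, which in general is not isomorphic to $B$. You notice this yourself (``stays in the relevant class when $B$ is an $L^p$ space or Hilbertian''), but the statement is for arbitrary $B$. The argument in \cite{BFGM} never leaves $B$: it is Guichardet's open-mapping argument. If every $\rho$-cocycle is a coboundary, the coboundary map $\bar d\colon B/B^{\rho(G)}\to Z^1(G;B,\rho)$ is a continuous linear bijection between Fr\'echet spaces, hence open, hence bounded below on a compact generating set; a sequence of unit vectors $\bar\xi_n$ in $B/B^{\rho(G)}$ with $\bar d\,\bar\xi_n\to0$ uniformly on compacta would contradict this. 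Your scaling-and-diagonalization step is in effect Gromov's ultralimit device (cf.\ Proposition~\ref{prop:scallim}); it is useful elsewhere and is exactly why Corollary~\ref{cor:reduced} needs a class stable under ultralimits, but it does not give the fixed-$B$ implication.

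Your Part~$(ii)$ follows \cite{BFGM} in substance: the Mazur map together with Banach--Lamperti for $(\mathrm{T})\Leftrightarrow(\mathrm{T}_{L^p})$, and for $p\in(1,2]$ the conditionally negative definite kernel $\|f-g\|_p^p$ (Schoenberg, $p$-stable variables) together with the Chebyshev-center lemma for $(\mathrm{T})\Rightarrow(\mathrm{F}_{L^p})$; the equivariance point you flag is again Banach--Lamperti. Your Part~$(iii)$, however, takes a genuinely different and underdeveloped route. As the paper itself notes in the discussion after Theorem~\ref{thm:BFGM}, \cite{BFGM} obtain $(\mathrm{F}_{L^p})$ for the algebraic group $G$ by combining $(\mathrm{T}_{L^p})$ with the \emph{Howe--Moore} property for Banach representations, and then pass to lattices by $L^p$-induction; there is no bounded-generation step. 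Your scheme --- bounded generation by copies of $SL_2(k)\ltimes k^2$, relative $(\mathrm{T})$ for those pairs, then a ``Mautner-type'' promotion to a fixed point --- is closer in spirit to Shalom--Kassabov and to what the present paper does for universal lattices, but it hides the central difficulty: deducing relative $(\mathrm{F}_B)$ from relative $(\mathrm{T}_B)$ for a pair like $SL_2(k)\ltimes k^2\triangleright k^2$ is precisely the kind of statement for which there is no soft Mautner proof; Theorem~\ref{thm:TtoF} of this paper exists specifically to prove such an implication for polynomial rings, and it is the technical core of the paper. You would also need the bounded generation of $G$ by those specific subgroups, and for lattices a mechanism replacing cocycle extension; \cite{BFGM} finesse both with Howe--Moore and $L^p$-induction, and the $[\mathcal{H}]$ case runs the same Howe--Moore scheme after a ucus renorming.
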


They also noted that both of the properties 
``$(\mathrm{F}_{L^p})$ for all $2<p<\infty$" 
and $(\mathrm{F}_{[\mathcal{H}]})$ are strictly stronger than $(\mathrm{T})$. 
In fact, 
G. Yu \cite{Yu} has proved that any hyperbolic group, including 
one with (T), admits a proper affine isometric action on 
some ${\ell}^p$ space. Existence of a proper affine action 
represents \textit{opposite} nature to rigidity for a group. 
Hence higher rank algebraic groups and lattices have 
stronger rigidity than hyperbolic (T) groups do. We also note that Shalom has announced 
that $Sp (n,1)$ fails to have $(\mathrm{T}_{[\mathcal{H}]})$. 
The author does not know whether there exists an infinite hyperbolic group 
with $(\mathrm{F}_{[\mathcal{H}]})$ or $(\mathrm{T}_{[\mathcal{H}]})$. (Note that $(\mathrm{F}_{[\mathcal{H}]})$ is equivalent to the property that any affine \textit{uniformly bi-Lipschitz} action on a Hilbert space has a global fixed point, and in \cite{BFGM} this property is written as \textit{property} $(\overline{F}_{\mathcal{H}})$. Similarly, in \cite{BFGM} $(\mathrm{T}_{[\mathcal{H}]})$ is written as \textit{property} $(\overline{T}_{\mathcal{H}})$.)

From the backgrounds above, 
 it seems to be a significant problem to establish 
 $(\mathrm{F_{\mathit{L^p}}})$ $(1<p<\infty )$ and 
$(\mathrm{F}_{[\mathcal{H}]})$ for certain groups. 
However as far as the author knows, the only known examples 
were the following: higher rank algebraic groups and lattices therein as in $(iii)$ of Theorem \ref{thm:BFGM}; and certain limit group of random groups \cite{NS} in the sense of M. Gromov. 
One of the main results of this paper is 
to provide a new example of groups with the properties above. Our example is of the form $SL_n(\mathbb{Z} [x_1, \ldots , x_k])$ ($n\geq 4$ and $k$ finite), and it is called the \textit{universal lattice} by Shalom \cite{Shal1}. We note that for $n\geq 3$, property (T) for universal lattices has been proved by Shalom \cite{Shal3} and L. Vaserstein\cite{Vas}. However, at the present we have to exclude the case of $n=3$ for establishing $(\mathrm{F_{\mathit{L^p}}})$ $(p>2)$ and $(\mathrm{F}_{[\mathcal{H}]})$.

\begin{thm}\label{thm:FSLnZ}
Let $k \geq 0$ be an integer. Then for $n \geq 4$, the universal lattice 
$SL_n(\mathbb{Z} [x_1, \ldots , x_k])$ has property $(\mathrm{F}_{\mathcal{C}})$. Here $\mathcal{C}$ stands for either the class $L^p$ $(1<p<\infty )$ or the class $[\mathcal{H}]$. 
\end{thm}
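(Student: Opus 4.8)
The plan is to follow Shalom's ``relativization'' method for property $(\mathrm T)$ of universal lattices, transplanted into the Banach framework of Bader--Furman--Gelander--Monod. Two of the cases are immediate: for $1<p\le 2$, property $(\mathrm T)$ of $SL_n(\mathbb Z[x_1,\dots,x_k])$ (known for $n\ge 3$ by Shalom and Vaserstein) together with Theorem~\ref{thm:BFGM}(ii) already gives $(\mathrm F_{L^p})$, and for $k=0$ the group $SL_n(\mathbb Z)$ is a higher-rank lattice, so Theorem~\ref{thm:BFGM}(iii) applies. So from now on put $\Gamma=SL_n(R)$ with $R=\mathbb Z[x_1,\dots,x_k]$ and $n\ge 4$, and let $\mathcal C$ be either $L^p$ with $p\in(2,\infty)$ or the class $[\mathcal H]$. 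Given an affine isometric action $\alpha$ of $\Gamma$ on a space $B\in\mathcal C$, with linear part $\pi$ and cocycle $c$, it suffices to show that the orbit of $0\in B$ is bounded: since $B$ is superreflexive, a bounded $\Gamma$-invariant closed convex subset of $B$ has a circumcenter fixed by $\alpha$ (for the class $[\mathcal H]$ one first replaces $B$ by the weakly compact closed convex hull of an orbit, on which the action by isometries of an equivalent Hilbert norm has a fixed point by a standard fixed-point theorem).

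The first ingredient is a relative fixed-point property, and this is where Shalom's ring-theoretic input enters: for every affine isometric action of $EL_2(R)\ltimes R^2$ on a space in $\mathcal C$, the abelian subgroup $R^2$ has a bounded orbit, hence a fixed point. This rests on relative property $(\mathrm T_{\mathcal C})$ of the pair $(EL_2(R)\ltimes R^2,R^2)$, which --- the normal subgroup being abelian --- reduces by the Bader--Furman--Gelander--Monod criterion to the statement that the $EL_2(R)$-action on the Pontryagin dual of $R^2$ admits no almost-invariant vectors concentrated near the trivial character; for $k=0$ this is the classical computation for $SL_2(\mathbb Z)\curvearrowright\widehat{\mathbb Z^2}$, and for $k\ge 1$ it is Shalom's theorem for universal lattices, whose proof uses the ring structure of $R$ essentially. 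To pass from ``no almost-invariant vectors'' to ``bounded cocycle on $R^2$'', split $B=B^{\pi(R^2)}\oplus B_0$ along the canonical invariant complement (available for isometric representations on superreflexive spaces): on $B_0$ the spectral gap of $\pi|_{R^2}$ away from $1$ bounds the $B_0$-component of $c|_{R^2}$, and the $B^{\pi(R^2)}$-component is an $EL_2(R)$-equivariant homomorphism $R^2\to B^{\pi(R^2)}$, which must vanish because the standard $EL_2(R)$-module $R^2$ has no nonzero quotient on which $EL_2(R)$ acts by uniformly bounded operators (its matrix entries are unbounded on $EL_2(R)$). Consequently every ``two-dimensional unipotent root subgroup'' $V\cong R^2$ of $\Gamma$ --- e.g.\ $\{1+ae_{13}+be_{23}:a,b\in R\}$ and all of its conjugates --- has a bounded $\alpha$-orbit and a fixed point, since each such $V$ lies in a conjugate of $EL_2(R)\ltimes R^2$.

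The remaining step is the globalization from these $V$'s to $\Gamma$ itself, which I expect to be the main obstacle; it is also the only place the hypothesis $n\ge 4$ is used. Because universal lattices are not known to be boundedly generated by elementary matrices, one cannot simply add up cocycle estimates along a bounded-length word; instead one propagates fixed points along a chain of subgroups by two soft moves: (a) if $H$ and $H'$ commute and each has a bounded orbit then $\langle H,H'\rangle=HH'$ has a bounded orbit and a fixed point; and (b) if $H'$ normalizes $H$ then $H'$ preserves the affine subspace $\mathrm{Fix}_\alpha(H)$ and acts on it with a bounded orbit, so $\langle H,H'\rangle$ has a common fixed point. For $n\ge 4$ the commuting and normalizing relations among the root subgroups of $SL_n$ are abundant enough --- for instance, in $SL_n$ with $n\ge 4$ the root subgroup $\{1+ce_{a3}:c\in R\}$ with $a\ge 4$ commutes with $\{1+ae_{13}+be_{23}\}$, which has no analogue in $SL_3$ --- to carry a single $V\cong R^2$ up to larger and larger subgroups and eventually to all of $\Gamma=EL_n(R)$, the commutator identities $[E_{ik}(1),E_{kj}(r)]=E_{ij}(r)$ ($n\ge 3$) guaranteeing that the relevant unipotent subgroups generate $\Gamma$. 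The delicate point is keeping the chosen fixed points compatible from one link of the chain to the next, and this is exactly what the rich commuting structure buys; in $SL_3$ the two-dimensional unipotent root subgroups are too sparse (no two distinct ones commute) for the scheme to close up, which is why $n=3$ must be excluded, and supplying a replacement for this compatibility in rank two is the principal difficulty this approach leaves open.
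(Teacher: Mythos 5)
Your proposal identifies the right building blocks but has two genuine gaps, and the second is based on a factual misconception.

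\textbf{Gap in the relative fixed-point step.} You claim that relative $(\mathrm{T}_{\mathcal C})$ of the pair $(EL_2(R)\ltimes R^2,R^2)$ forces every affine isometric action of $EL_2(R)\ltimes R^2$ on $B\in\mathcal C$ to have a bounded $R^2$-orbit, via a ``spectral gap of $\pi|_{R^2}$''. But relative $(\mathrm T_B)$ gives a Kazhdan-type gap for the representation of the \emph{whole} group $EL_2(R)\ltimes R^2$ on $B'_{\pi(R^2)}$, not for $\pi|_{R^2}$ alone; an infinite abelian group never has such a gap by itself. To convert the gap into a bound on $c_1|_{R^2}$, the standard device is to show that each $c_1(h)$, $h\in R^2$, is almost fixed by a \emph{full} generating set $F$ of the ambient group; the cocycle identity makes this automatic only when $h$ commutes with the elements of $F$, and in $EL_2(R)\ltimes R^2$ the subgroup $R^2$ does not commute with the $EL_2$-part. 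This is exactly why the paper's Theorem~\ref{thm:TtoF} passes to the pair $SL_3(A_k)\ltimes A_k^3\triangleright A_k^3$: inside $SL_4$ there is a one-dimensional unipotent $N_1$ which commutes with a generating set $F_0$ of a larger subgroup containing two copies of $EL_2\ltimes A_k^2$, so the commuting trick applies and one then bootstraps from $N_1$ to all of $A_k^3$. If the statement you assert for $(EL_2\ltimes R^2,R^2)$ were available, it would immediately yield $(\mathrm F_{\mathcal C})$ for $SL_3(A_k)$ through the same globalization; the paper explicitly records this as open (see the remark after Proposition~\ref{prop:bddgen}), so the claim should be treated as unproved. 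Note in passing that this, not sparseness of rank-two unipotents, is the actual reason $n=3$ is excluded: the Shalom-machinery input $H_i\cong A_k^{n-1}$ must have $n-1\ge 3$ for Theorem~\ref{thm:TtoF} to apply.

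\textbf{Misconception and gap in the globalization step.} You write that ``universal lattices are not known to be boundedly generated by elementary matrices'' and therefore design a fixed-point-propagation scheme to avoid bounded generation. In fact Vaserstein's theorem, which the paper quotes and uses, states that for $n\ge 3$ the group $SL_n(A_k)$ \emph{is} boundedly generated by the subgroup $SL_2(A_k)$ (sitting in a corner) and the elementary-matrix subgroups $E_{ij}(A_k)$ --- bounded generation in the sense of Definition~\ref{def:shalom}, i.e.\ by subgroups, not by cyclic groups --- and this is precisely the hypothesis (3) of the Shalom property. What replaces your ad-hoc propagation in the paper is Theorem~\ref{thm:Shalom}: one takes an ultralimit/scaling limit to extract a uniform action in a uc space (Corollary~\ref{cor:reduced}), minimizes the distance between an $H_1$-fixed point and an $H_2$-fixed point over the resulting class $\mathcal A$, shows the minimizing distance is $0$ using strict convexity and the $G$-normalization condition~(2), and then uses Vaserstein's bounded generation (3) to bound the $\Gamma$-orbit. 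The ``compatibility of fixed points'' you correctly identify as the crux is exactly what the ultralimit minimization buys; your commuting/normalizing moves do not by themselves close the loop, and you acknowledge this. So the globalization as written is a programme, not a proof, and it duplicates in a weaker form the structure that Shalom's machinery already provides.

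In short: both missing ingredients are supplied by the paper, respectively by the commuting-unipotent argument inside $SL_3(A_k)\ltimes A_k^3$ (Theorem~\ref{thm:TtoF}) and by Shalom's machinery with Vaserstein's bounded generation (Theorem~\ref{thm:Shalom}), and neither is recoverable from your outline as it stands.
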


We note that this theorem particularly implies property $(\mathrm{T}_{[\mathcal{H}]})$ 
of universal lattices with $ n\geq 4$. It follows from $(i)$ of Theorem 
\ref{thm:BFGM}. We also mention that Shalom call these groups universal lattices because for any unital, commutative finitely generated ring $A$, some universal lattice surjects onto the \textit{elementary linear group} $EL_n (A)$. Here we define $EL_n (A)$ as the multiplicative group of $n\times n$ matrices generated by elementary matrices. The Suslin stability theorem \cite{Sus} states that for $A_k =\mathbb{Z}[x_1,...,x_k]$, $EL_n (A_k)$ coincides with $SL_n (A_k)$ if $n\geq 3$. We note that universal lattices can\textit{not} be realized as an arithmetic lattice of any algebraic group because of the Margulis normal subgroup theorem. Interest in universal lattice from aspect of rigidity dates back to one by A. Lubotzky.

For our proof of Theorem \ref{thm:FSLnZ}, 
we need to deduce $(\mathrm{F}_B)$ from $(\mathrm{T}_B)$ even though this direction does not hold in general. 
There are the following two well-known cases in which the direction above 
is true: first, 
 the case of that $B=\mathcal{H}$ is proved by Delorme \cite{De} with 
the aid of theory of conditionally negative definite functions. Second, the 
case of a higher rank 
algebraic group is treated in \cite[\S 5]{BFGM}. In this case, 
the \textit{Howe--Moore property} for simple algebraic groups \cite[Appendix 9]{BFGM} is the key. (Bader--Furman--Gelander--Monod have employed this property to show higher rank algebraic groups have $(\mathrm{F}_{L^p})$, and then deduce the case of higher rank lattices by induction. Therefore their proof for higher rank lattices is not direct.) 
By making use of the \textit{relative} versions of $(\mathrm{T}_B)$ and 
$(\mathrm{F}_B)$, we have shown the 
following \textit{new} implication in this direction: 

\begin{thm}\label{thm:TtoF}
Let $k \geq 0$ be an integer and $A_k = \mathbb{Z} [x_1, \ldots , x_k]$. 
Suppose $B$ is a superreflexive Banach space.  If the pair 
$ EL_2 (A_k) \ltimes A_{k}^2 \triangleright A_{k}^2$ has relative property 
$(\mathrm{T}_B)$, then the pair $ SL_3 (A_k ) \ltimes A_{k}^3 \triangleright A_{k}^3$ 
has relative property $(\mathrm{F}_B)$.
\end{thm}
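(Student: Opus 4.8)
\emph{Strategy.} It suffices to show that every affine isometric action of $G:=SL_3(A_k)\ltimes A_k^3$ on $B$ has a bounded $A_k^3$-orbit: indeed, since $B$ is superreflexive, hence reflexive, the closed convex hull of such an orbit is a bounded, closed, convex, $G$-invariant set, hence weakly compact, and a Ryll--Nardzewski-type fixed point theorem (cf.\ \cite{BFGM}) then produces an $A_k^3$-fixed point, which is precisely relative property $(\mathrm{F}_B)$ for $G\triangleright A_k^3$. So let $\alpha$ be an affine isometric action of $G$ on $B$, write $\alpha(g)\xi=\rho(g)\xi+b(g)$ with $\rho$ the isometric linear part and $b$ the associated cocycle, and put $N=A_k^3$. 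Inside $N$ take the two coordinate $2$-planes $N_1=A_ke_1\oplus A_ke_2$ and $N_2=A_ke_1\oplus A_ke_3$; then $N=N_1+N_2$ and every $n\in N$ is a product $n=n_1n_2$ with $n_i\in N_i$. Since $b(n_1n_2)=b(n_1)+\rho(n_1)b(n_2)$ and $\rho$ is isometric, $\sup_{n\in N}\|b(n)\|\le\sup_{N_1}\|b\|+\sup_{N_2}\|b\|$, so the whole problem reduces to bounding $b$ on each $N_j$ separately.

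Fix $j$ and let $E^{(j)}\subset SL_3(A_k)$ be the copy of $EL_2(A_k)$ consisting of the elementary matrices supported on the two coordinates defining $N_j$; then $E^{(j)}$ normalizes $N_j$, acts on it as $EL_2(A_k)$ acts on $A_k^2$, and $L_j:=E^{(j)}\ltimes N_j\subseteq G$ is, as an abstract pair, isomorphic to $EL_2(A_k)\ltimes A_k^2\triangleright A_k^2$, hence has relative property $(\mathrm{T}_B)$ by hypothesis. Using superreflexivity, take the canonical $\rho$-invariant splitting $B=B^{\rho(N_j)}\oplus B_j$ of \cite{BFGM}; since $E^{(j)}$ normalizes $N_j$ this splitting is invariant under $\rho(L_j)$, and $b|_{N_j}$ decomposes accordingly as $\beta_j\oplus\gamma_j$. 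The summand $\beta_j\colon N_j\to B^{\rho(N_j)}$ is a group homomorphism (as $\rho(N_j)$ acts trivially there), and, by computing $b$ on conjugates and using that the projection onto $B^{\rho(N_j)}$ is $\rho(E^{(j)})$-equivariant, one gets $\beta_j(gng^{-1})=\rho(g)\beta_j(n)$ for $g\in E^{(j)}$. The crucial algebraic feature of $SL_{\ge3}$ now enters: $A_k^2$ is generated, with \emph{uniformly bounded} length, by ``commutator vectors'' $g\cdot w-w$ with $g\in EL_2(A_k)$ and $w$ a standard basis vector --- for instance $(a,b)=(E_{12}(a)\cdot e_2-e_2)+(E_{21}(b)\cdot e_1-e_1)$ --- so equivariance yields $\beta_j(a,b)=(\rho(E_{12}(a))-\mathrm{id})\beta_j(e_2)+(\rho(E_{21}(b))-\mathrm{id})\beta_j(e_1)$, which has norm $\le 2\|\beta_j(e_1)\|+2\|\beta_j(e_2)\|$ because $\rho$ is isometric. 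Hence $\beta_j$ is bounded, with no Kazhdan-type input required.

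It remains to bound $\gamma_j\colon N_j\to B_j$, and this is the heart of the matter: it is the implication ``$(\mathrm{T}_B)\Rightarrow(\mathrm{F}_B)$'', false for general groups and Banach spaces, obtained classically either from conditionally negative definite functions (Hilbert case) or from the Howe--Moore property (higher-rank case), neither available here. By construction $B_j$ has no nonzero $\rho(N_j)$-invariant vector, so relative $(\mathrm{T}_B)$ of $L_j\triangleright N_j$ forces $\rho|_{L_j}$ to have a uniform spectral gap on $B_j$. I would run a Mautner-type argument internal to $L_j=EL_2(A_k)\ltimes A_k^2$: assuming $\gamma_j$ unbounded, the fact that the $E^{(j)}$-action moves the generators of $N_j$ around in $A_k^2$ mixing them together should make the cocycle values spread out under the $L_j$-action, and from them one should extract almost $L_j$-invariant vectors in $B_j$, contradicting the gap; here relative $(\mathrm{T}_B)$ of $L_j\triangleright N_j$ plays the role that Howe--Moore mixing plays in the higher-rank argument. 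The main obstacle is precisely to carry out this extraction in the superreflexive setting, where weak convergence of $\rho(g)$ to $0$ is unavailable and the contradiction must be drawn directly from the uniform gap (passing, if necessary, to an ultrapower of $B$, which remains superreflexive and to which property $(\mathrm{T}_B)$ transfers by finite representability). Granting this, $b|_{N_1}$ and $b|_{N_2}$ are bounded, hence so is $b|_N$, and the proof is complete.
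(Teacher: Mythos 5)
Your reduction to bounding $b$ on the two coordinate planes $N_j$, and your treatment of the component $\beta_j$ landing in $B^{\rho(N_j)}$ via equivariance under conjugation by $E^{(j)}$, are both correct. But the step you defer --- bounding $\gamma_j$ in $B_j$ --- is not a technical loose end; it is the entire content of the theorem, and your strategy is structurally unable to reach it. Bounding $\gamma_j$ on $N_j$ amounts to proving that $L_j\triangleright N_j\cong EL_2(A_k)\ltimes A_k^2\triangleright A_k^2$ has relative $(\mathrm{F}_B)$, i.e.\ deriving relative $(\mathrm{F}_B)$ from relative $(\mathrm{T}_B)$ for the \emph{same} small pair, which is exactly the implication the theorem is designed to circumvent by passing to the larger group. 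Neither conditionally negative definite functions nor Howe--Moore mixing is available here, and a ``Mautner-type argument internal to $L_j$'' is not a substitute: from an unbounded cocycle on the abelian subgroup $N_j$ alone one cannot manufacture almost $\rho(L_j)$-invariant vectors in $B_j$ without some mixing input, and passing to an ultrapower does not supply one.

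What the ambient pair $SL_3(A_k)\ltimes A_k^3\triangleright A_k^3$ buys --- and what the paper actually exploits --- is the one-dimensional subgroup $Z=\{E_{1,4}(a):a\in A_k\}\subset N$, which \emph{commutes} with the generators of two differently placed copies $G_1$, $G_2$ of $EL_2(A_k)\ltimes A_k^2$ inside $G$, with normal $A_k^2$-parts $L\triangleleft G_1$ and $N_2\triangleleft G_2$. The paper splits $B=B^{\rho(N)}\oplus B'_{\rho(N)}$ with respect to the full $N$ (not $N_j$), handles the $B^{\rho(N)}$-part as you do, and observes that for $h\in Z$ and any generator $s$ of $G_1$ or $G_2$, commutation gives $\|\rho(s)c_1(h)-c_1(h)\|\le 2\|c_1(s)\|$, a bound \emph{independent of} $h$. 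Relative $(\mathrm{T}_B)$ applied to $G_1\triangleright L$ and to $G_2\triangleright N_2$ then places every $c_1(h)$ within a fixed distance of a $\rho(L)$-invariant vector and of a $\rho(N_2)$-invariant vector; since each element of $Z$ is a single commutator of an element of $L$ with an element of $N_2$, and $N=Z\cdot N_2$, one obtains a uniform bound for $\|\rho(h)\xi-\xi\|$ over all $h\in N$ and $\xi\in c_1(Z)$. If $c_1(Z)$ were unbounded, the Chebyshev center lemma would then produce a nonzero $\rho(N)$-invariant vector in $B'_{\rho(N)}$, a contradiction. The commuting direction $Z$, the two distinct embeddings of $EL_2(A_k)\ltimes A_k^2$, and the double use of relative $(\mathrm{T}_B)$ they permit are the ideas your proposal is missing.
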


To prove Theorem \ref{thm:FSLnZ}, we combine Theorem \ref{thm:TtoF} with 
the following relative $(\mathrm{T}_B)$, Shalom's argument in \cite{Shal3}, and Vaserstein's 
bounded generation in \cite{Vas}.

\begin{thm}\label{thm:Tsemi}
With the same notation as one in  Theorem \ref{thm:TtoF}, the pair 
$ EL_2 (A_k) \ltimes A_{k}^2 \triangleright A_{k}^2$ has relative 
property $(\mathrm{T}_{\mathcal{C}})$. Here $\mathcal{C}$ stands for 
$L^p$ $(1<p<\infty)$ or $[\mathcal{H}]$.
\end{thm}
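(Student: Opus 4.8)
\emph{Sketch of the intended argument.} The plan is to reduce the assertion, for both classes, to the ordinary (unitary, Hilbert-space) relative property $(\mathrm{T})$ for the pair $EL_2(A_k)\ltimes A_k^2\,\triangleright\,A_k^2$; put $\Gamma_0:=EL_2(A_k)\ltimes A_k^2$ and $N:=A_k^2$. This ordinary relative $(\mathrm{T})$ is available by the Burger--Shalom method --- for $k=0$ it is the classical relative $(\mathrm{T})$ of $SL_2(\mathbb{Z})\ltimes\mathbb{Z}^2\triangleright\mathbb{Z}^2$, and for $A_k=\mathbb{Z}[x_1,\dots,x_k]$ it is part of Shalom's work \cite{Shal3} --- and, crucially, it comes with a \emph{finite} Kazhdan set $F^{*}\subset\Gamma_0$ (one may take $F^{*}=\{\binom{1}{0},\binom{0}{1}\}\cup\{E_{12}(x_i),E_{21}(x_i):0\le i\le k\}$, with $x_0:=1$), this finiteness being precisely what lets the ring structure of $A_k$ be used. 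Its underlying mechanism: by residual finiteness of $A_k$ and the ``multiplication-by-$x_i$'' elementary moves, any sequence of probability measures on the dual group $\widehat{N}$ that concentrates near the trivial character $\mathbf{1}$ and is asymptotically invariant under the dual $EL_2(A_k)$-action must in fact put asymptotically full mass at $\mathbf{1}$. It then remains to pass from unitary representations to the classes $L^p$ $(1<p<\infty)$ and $[\mathcal{H}]$.

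\emph{The class $L^p$ $(p\neq2)$.} Let $\rho$ be an isometric representation of $\Gamma_0$ on some $L^p(\mu)$ with almost invariant vectors. By the Banach--Lamperti description each surjective isometry $\rho(g)$ is a weighted composition operator; replacing the ``$p$-weights'' by the corresponding ``$2$-weights'' while keeping the measure-class-preserving point transformations underneath yields a \emph{unitary} representation $\rho^{(2)}$ of $\Gamma_0$ on $L^2(\mu)$, and the Mazur map $M_{p,2}\colon f\mapsto|f|^{p/2}\operatorname{sgn}f$ is a $\Gamma_0$-equivariant bijection of the unit spheres of $L^p(\mu)$ and $L^2(\mu)$, with inverse $M_{2,p}$. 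As $M_{p,2}$ and $M_{2,p}$ are uniformly continuous, $\rho^{(2)}$ also has almost invariant vectors; ordinary relative $(\mathrm{T})$ gives a nonzero $N$-invariant $\eta\in L^2(\mu)$, and then $M_{2,p}(\eta/\|\eta\|_2)$ is a nonzero $N$-invariant vector for $\rho$ by equivariance and injectivity. The bookkeeping --- $\sigma$-finiteness in Banach--Lamperti (restrict to the $\rho(\Gamma_0)$-invariant $\sigma$-finite part carrying the relevant vectors) and the modulus of continuity of $M_{p,2}$, so that a Kazhdan pair transfers --- is routine; this is the mechanism behind Theorem \ref{thm:BFGM}$(ii)$ as carried out in \cite[\S4.b]{BFGM}. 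No restriction $p\le2$ is needed, the Mazur map being uniformly continuous for all exponents; and $p=2$ is the ordinary statement itself.

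\emph{The class $[\mathcal{H}]$.} An isometric representation on a space in $[\mathcal{H}]$ transports, through the Banach-space isomorphism with a Hilbert space, to a uniformly bounded representation $\rho$ of $\Gamma_0$ on a Hilbert space $\mathcal{H}$; assume it has almost invariant vectors. Since $N=A_k^2$ is abelian, hence amenable, $\rho|_N$ is unitarizable: conjugating by a bounded invertible operator (which affects neither the almost invariant vectors nor the space of $N$-invariant vectors, and only rescales the uniform bound) we may assume $\rho|_N$ is unitary, with projection-valued spectral measure $E$ on $\widehat{N}$. Take unit vectors $\xi_j$ that are $(F^{*},\delta_j)$-invariant, $\delta_j\to0$. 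Near-invariance under $\binom{1}{0},\binom{0}{1}\in F^{*}$, propagated by the $E_{12}(x_i),E_{21}(x_i)\in F^{*}$ --- note that, $N$ being abelian, each finitely generated layer $N_d\subset N$ is generated by bounded-length combinations of $EL_2(A_k)$-conjugates of $\binom{1}{0},\binom{0}{1}$ --- forces the probability measures $\mu_j(\cdot)=\langle E(\cdot)\xi_j,\xi_j\rangle$ to concentrate near $\mathbf{1}$. Near-invariance under $E_{12}(x_i),E_{21}(x_i)$ together with the semidirect relation $\rho(g)\rho(n)\rho(g)^{-1}=\rho(g\cdot n)$ should then force the $\mu_j$ to be asymptotically invariant under the dual $EL_2(A_k)$-action on $\widehat{N}$; plugging this into the dynamical heart of the ordinary relative $(\mathrm{T})$ yields $\mu_j(\{\mathbf{1}\})\to1$, so $E(\{\mathbf{1}\})\xi_j\neq0$ for large $j$ and $E(\{\mathbf{1}\})\mathcal{H}=\mathcal{H}^{\rho(N)}$ gives a nonzero $N$-invariant vector. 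The complemented-invariant-subspace proviso in the definition of relative $(\mathrm{T}_{[\mathcal{H}]})$ is satisfied because $\mathcal{H}^{\rho(N)}$ is orthogonally complemented for the unitarized inner product and is $\Gamma_0$-invariant as $N\triangleleft\Gamma_0$.

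\emph{Expected main obstacle.} The subtle point is the ``should'' above. After unitarizing $\rho|_N$, the operators $\rho(g)$ for $g\in EL_2(A_k)$ are still only uniformly bounded, so passing from $\rho(g)\rho(n)\rho(g)^{-1}=\rho(g\cdot n)$ to honest asymptotic $EL_2(A_k)$-invariance of the $\mu_j$ requires controlling the adjoints $\rho(g)^{*}$ against the $\xi_j$, and ``$\rho(g)\xi_j\to\xi_j$'' does not by itself give ``$\rho(g)^{*}\xi_j\to\xi_j$''. I would handle this by replacing $\xi_j$ with vectors averaged over a fixed finite piece of $EL_2(A_k)$, or by arguing directly with the bounded spectral idempotents $\rho(g)E(\cdot)\rho(g)^{-1}$ and the Boolean algebra they generate while tracking the uniform bound. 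The remaining ingredients --- the Banach--Lamperti and Mazur-map steps, the reduction to a fixed finite portion of $\Gamma_0$, and the appeal to the classical dynamical lemma --- are standard. (If instead one avoids citing the ordinary relative $(\mathrm{T})$ and argues from scratch, the obstacle relocates to that dynamical lemma on $\widehat{A_k^2}$: producing enough hyperbolicity near $\mathbf{1}$ from the polynomial ring structure, which is the core of \cite{Shal3}.)
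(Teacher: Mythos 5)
Your treatment of the $L^p$ case is essentially what the paper does: it cites the relative version of Theorem~\ref{thm:BFGM}$(ii)$, i.e.\ the Banach--Lamperti/Mazur-map transfer carried out in \cite[\S 4.b]{BFGM}, applied to Shalom's relative~(T) for $EL_2(A_k)\ltimes A_k^2\triangleright A_k^2$. That half is fine.

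For $[\mathcal{H}]$ you have correctly located the crux and then stopped at it. After Dixmier unitarization of $\rho|_N$ the operators $\rho(g)$, $g\in EL_2(A_k)$, are still only uniformly bounded, and if you set $\mu_j(\cdot)=\langle E(\cdot)\xi_j,\xi_j\rangle$ (a genuine probability measure because $E$ is a PVM), the identity $\rho(g)E(Z)\rho(g)^{-1}=E(\hat{g}Z)$ gives you $\mu_j(\hat{g}Z)=\langle E(Z)\rho(g)^{-1}\xi_j,\rho(g)^{*}\xi_j\rangle$, and ``$\rho(g)\xi_j\approx\xi_j$'' controls $\rho(g)^{-1}\xi_j$ but says nothing about $\rho(g)^{*}\xi_j$. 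Your suggested fixes --- averaging $\xi_j$ over a finite piece of $EL_2(A_k)$, or reasoning with the Boolean algebra of $\rho(g)E(\cdot)\rho(g)^{-1}$ --- are not carried out, and the averaging idea in particular does not obviously produce a unit almost-invariant vector with adjoint control without a new argument. So there is a real gap.

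The paper's resolution is precisely a device for handling this adjoint: it does \emph{not} build a positive measure from the Hilbert inner product. Instead it fixes a ucus compatible norm $\|\cdot\|_\rho$ (Remark~\ref{rem:superref}), lets $\xi^*\in S(B^*)$ be the image of $\xi$ under the duality mapping of this uniformly smooth norm (Lemma~\ref{lem:dual}), and sets
\begin{equation*}
\int_{\hat N} f\,d\mu \;=\; \bigl\langle\, T^{-1}\sigma(f)T\xi,\ \xi^*\,\bigr\rangle .
\end{equation*}
Since the duality mapping is equivariant, $(\rho(g)\xi)^*=\rho^{\dagger}(g)\xi^*$, and since it is uniformly continuous with modulus governed by the modulus of smoothness (Lemma~\ref{lem:Bl}), one gets the quantitative implication $\delta_\xi\to0\Rightarrow\delta^*_\xi\to0$. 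That is exactly the adjoint-side control you are missing. The cost is that $\mu$ is complex-valued, so one takes the Hahn--Jordan decomposition of $\operatorname{Re}\mu$ and re-derives the concentration estimates for the positive part $\mu_+$ (the inequalities $\mu_+(\hat N)\ge1$, $\mu_+(D_0)=O(\delta_\xi\delta^*_\xi)$, $|\mu_+(\hat g Z)-\mu_+(Z)|=O(\delta_\xi+\delta^*_\xi)$); this is what the Appendix does, with Kassabov's valuation decomposition of $\widehat{A_k^2}$ handling general $k$. Your sketch would become a proof if you replaced the pairing $\langle E(\cdot)\xi_j,\xi_j\rangle$ by the duality-map pairing above and ran the Jordan decomposition; as written, the ``should then force'' in your $[\mathcal{H}]$ paragraph is unsupported.
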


Further, we generalize property $(\mathrm{F}_B)$ by taking a \textit{quasification}, in a way similar to one by N. Monod \cite{Mon}. More precisely, we define that a group has \textit{property }$(FF_B)$ if any \textit{quasi-action} on $B$ of the group has bounded orbits. Here in this paper, the quasification means we become to allow uniformly bounded error from the original condition. 
Our next result is that the universal lattices for $n\geq 4$ have the following limited version of this property $(\mathrm{FF}_B)$ for $B \in \bigcup_{1<p<\infty } L^p \cup [\mathcal{H}]$: for any quasi-action on $B$ whose associated linear representation does \textit{not} have non-zero invariant vectors, any orbit is bounded.  We define the \textit{property} $(FF_B)/T$ (``\textit{property }$(FF_B)$ \textit{modulo trivial part}") as a certain slightly stronger form of the limited property above. The exact definition and precise arguments will be taken place in Section \ref{sec:FFB}. (We mention that the following result might be \textit{new} even for the case of that $B=\mathcal{H}$.)

\begin{thm}\label{thm:FFB}$($Main Theorem$)$
Let $k \geq 0$ be an integer and $n \geq 4$. Then 
universal lattice 
$SL_n (\mathbb{Z}[x_1 , \ldots , x_k])$ has property 
$(\mathrm{FF}_{L^p})/\mathrm{T}$ $(1<p< \infty)$ and property 
$(\mathrm{FF}_{[\mathcal{H}]})/\mathrm{T}$.
\end{thm}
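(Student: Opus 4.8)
The plan is to deduce property $(\mathrm{FF}_B)/\mathrm{T}$ from the fixed point property $(\mathrm{F}_B)$ of Theorem \ref{thm:FSLnZ} by upgrading from genuine affine isometric actions to affine quasi-actions, following the strategy that a quasi-cocycle on a group with enough rigidity must be at bounded distance from a genuine cocycle. First I would set up the formalism of Section \ref{sec:FFB}: an affine quasi-action on $B$ associated to a uniformly bounded (or, in the $L^p$ and $[\mathcal{H}]$ settings, isometric after renorming) linear representation $\rho$ is encoded by a quasi-cocycle $b\colon\Gamma\to B$, meaning $\sup_{s,t}\|b(st)-\rho(s)b(t)-b(s)\|<\infty$; property $(\mathrm{FF}_B)/\mathrm{T}$ asserts that when $\rho$ has no nonzero invariant vectors, every such $b$ is bounded (equivalently, every orbit of the quasi-action is bounded). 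The reduction to $(\mathrm{F}_B)$ rests on showing that the defect cocycle $\delta b(s,t)=b(st)-\rho(s)b(t)-b(s)$, which is a bounded $2$-cocycle, is a bounded $2$-coboundary — that is, $H^2_b(\Gamma;B)\to H^2(\Gamma;B)$ is injective on the relevant class, which is precisely the consequence the abstract already advertises.

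Concretely, the key steps in order are: (1) Reduce property $(\mathrm{FF}_B)/\mathrm{T}$ for the class $\mathcal C$ to the statement that every quasi-cocycle valued in a $\mathcal C$-space with fixed-point-free linear part is bounded. (2) Given such a quasi-cocycle $b$ with defect $\delta b$ bounded, invoke the boundedness of $\delta b$ together with property $(\mathrm{T}_B)$ (which follows from $(\mathrm{F}_B)$ by Theorem \ref{thm:BFGM}(i)) to find a genuine cocycle $c$ with $\sup_s\|b(s)-c(s)\|<\infty$; here the mechanism is an averaging/center-of-mass argument exploiting uniform convexity of $B$ (superreflexivity of $L^p$ and $[\mathcal H]$ spaces) applied to the bounded set $\{b(st)-\rho(s)b(t): t\in\Gamma\}$ for each fixed $s$, which produces $c(s)$ and forces $c$ to satisfy the exact cocycle identity. (3) Apply $(\mathrm{F}_B)$ (Theorem \ref{thm:FSLnZ}) to the affine isometric action defined by $(\rho,c)$: since $\rho$ has no invariant vectors, the cocycle $c$ is itself a coboundary, hence bounded — in fact zero up to the choice of fixed point. (4) Combine: $b$ is at bounded distance from the bounded cocycle $c$, so $b$ is bounded, giving $(\mathrm{FF}_B)/\mathrm{T}$; the ``$/\mathrm{T}$'' refinement is handled by decomposing the linear part into its invariant subspace and a complement, treating the complement by the above and the invariant part by the triviality of the quasi-action there up to bounded error.

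The main obstacle I expect is step (2): producing a genuine cocycle at bounded distance from $b$. The naive barycenter construction needs a canonical way to average the bounded ``orbit'' of defect values, and for general superreflexive $B$ one must use the unique circumcenter (Chebyshev center) of a bounded set, which exists by reflexivity but whose equivariance and the additivity needed to verify the cocycle identity are delicate — one really needs the combination of uniform convexity and the $(\mathrm{T}_B)$-type contraction estimate of \cite{BFGM} to control how circumcenters transform under $\rho$. An alternative, cleaner route that I would pursue in parallel is to bypass cocycles entirely and argue geometrically on the quasi-action: a quasi-action of a group with $(\mathrm{F}_B)$ can be ``straightened'' because any two orbits are at bounded Hausdorff distance, so it suffices to bound a single orbit; one then runs the fixed-point argument on the closed convex hull of an orbit, using uniform convexity to get a unique circumcenter which is quasi-fixed, and finally shows a quasi-fixed point forces a bounded orbit when the linear part has no invariant vectors. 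Either way, the real content is transferring the rigidity of genuine actions across the uniformly-bounded perturbation, and I would organize the exposition so that Theorem \ref{thm:FFB} follows from Theorem \ref{thm:FSLnZ} plus a general lemma of the form ``$(\mathrm{F}_B)\Rightarrow(\mathrm{FF}_B)/\mathrm{T}$ for superreflexive $B$,'' which is where all the analytic work is concentrated.
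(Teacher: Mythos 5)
Your proposal hinges on an alleged general lemma of the form ``$(\mathrm{F}_B)\Rightarrow(\mathrm{FF}_B)/\mathrm{T}$ for superreflexive $B$,'' and step (2) — producing a genuine cocycle $c$ at bounded distance from the quasi-cocycle $b$ — is exactly the content of such a lemma. This is not a theorem, and the paper does not prove it; indeed the text only records the reverse implication ($(\mathrm{FF}_B)/\mathrm{T}\Rightarrow(\mathrm{F}_B)$). The statement that every quasi-$\rho$-cocycle is a genuine cocycle plus a bounded map is precisely the assertion that $\Psi^2\colon H^2_b(\Gamma;B,\rho)\to H^2(\Gamma;B,\rho)$ is injective, which is the \emph{output} of Theorem~\ref{thm:FFB} (via Corollary~\ref{cor:bddcohom}), not an input derivable from $(\mathrm{T}_B)$. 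Property $(\mathrm{T}_B)$ controls $H^1$, not the kernel of the degree-two comparison map, and Monod's property $(\mathrm{TT})$ is a genuinely stronger notion than $(\mathrm{T})$: there are Kazhdan groups whose second bounded cohomology with nontrivial unitary coefficients is far from injecting into ordinary cohomology. Your circumcenter/averaging construction also does not deliver the cocycle identity: the set $\{b(st)-\rho(s)b(t):t\in\Gamma\}$ is just $b(s)$ shifted by the bounded defect $\{\delta b(s,t)\}_t$, so its Chebyshev center is $b(s)+c'(s)$ with $c'(s)$ the center of $\{\delta b(s,t)\}_t$, and nothing forces $c'$ to correct the defect into an exact cocycle for a non-amenable group. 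The alternative geometric route suffers from the same circularity: a circumcenter of an orbit requires the orbit to be bounded, which is the thing to prove.

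What the paper actually does is quite different and uses the specific algebraic structure of universal lattices. It first upgrades Theorem~\ref{thm:TtoF} to Theorem~\ref{thm:TtoFF}: relative $(\mathrm{T}_B)$ for $EL_2(A_k)\ltimes A_k^2\triangleright A_k^2$ yields relative $(\mathrm{FF}_B)$ for the pair $SL_3(A_k)\ltimes A_k^3 > A_k^3$; the point is that the quantitative contraction argument of Section~\ref{sec:TtoF} only uses the commutator structure of the unipotent subgroups and the $(\mathrm{T}_B)$ almost-invariance estimate, both of which tolerate a uniformly bounded defect, so the proof carries over verbatim to quasi-cocycles. This gives condition $(4')$ of the Shalom property for $(\mathrm{FF}_B)$ for the unipotent subgroups $H_1,H_2\cong A_k^{n-1}$ inside $\Gamma=SL_n(A_k)$. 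Then Proposition~\ref{prop:bddgen} combines this relative $(\mathrm{FF}_B)$ with normalization by $G\cong SL_{n-1}(A_k)$ to show that $b_1(G)$ consists of uniformly almost-invariant vectors, so $(\mathrm{T}_B)$ (which does follow from Theorem~\ref{thm:FSLnZ}) forces $b_1(G)$ bounded, and finally Vaserstein's bounded generation propagates boundedness from $G,H_1,H_2$ to all of $\Gamma$. Your proposal uses none of these ingredients — relative $(\mathrm{FF}_B)$ of the semidirect pairs, normalization, and bounded generation — and without them there is no way to bridge from fixed-point rigidity of honest actions to boundedness of quasi-actions.
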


For any superreflexive Banach space $B$, ``property $(\mathrm{FF}_B)/\mathrm{T}$" implies property $(\mathrm{F}_B)$. The author does not know whether our  argument can be extended to the case of that $B$ is not superreflexive. We note that V. Lafforgue \cite{Laf1}, \cite{Laf2}, Corollaire 0.7, Proposition 5.2 and Proposition 5.6, has shown that  $\mathrm{SL}_3 (F)$ $($, where $F$ is a non-Archimedean local field$)$ and cocompact lattices therein have $(\mathrm{F}_B)$ 
for any Banach space $B$ of type $>1$. He has asked whether these group have $(\mathrm{FF}_B)$ for a Banach space $B$ of type $>1$(or more generally, of finite cotype). We note that the boundedness property $(\mathrm{FF}_B)$ does \textit{not} necessarily imply the fixed point property $(\mathrm{F}_B)$ if $B$ is not superreflexive.

We apply Theorem \ref{thm:FSLnZ} and Theorem \ref{thm:FFB} to the following two objects: actions on the circle; and second bounded cohomology. We note that for the same class $\mathcal{C}$ as in Theorem \ref{thm:FSLnZ}$, (\mathrm{F}_\mathcal{C})$ and $(\mathrm{FF}_\mathcal{C})$ (and $(\mathrm{FF}_\mathcal{C})/\mathrm{T}$) pass to quotient groups and subgroups of finite indices. (For the heredity to subgroups of finite indices, one uses $p$-induction. See \cite[\S 8]{BFGM}). Therefore the corollaries below hold for $\Gamma=EL_n (A)$ or its subgroup of finite index, where $A$ stands for \textit{any} unital, commutative and finitely generated ring. (For instance, $\Gamma=SL_n(\mathbb{Z}[x,y, x^{-1},y^{-1}])$ and $\Gamma=SL_n(\mathbb{F}_q[x_1,\ldots ,x_k])$, where $q$ is a prime power and $\mathbb{F}_q$ denotes the finite field of order $q$.)

\begin{cor}\label{cor:circle}
Let $k \geq 0$ be an integer and $\mathcal{C}$ be the same class as one in 
Theorem \ref{thm:FSLnZ}. Let $n \geq 4$, and $\Gamma$ be any group of one of the following three forms: 
\begin{enumerate}[$\mathrm{(}$a$\mathrm{)}$]
  \item universal lattices $SL_n (\mathbb{Z} [x_1 , \ldots , x_k])$, 
  \item quotient groups of $($a$)$,
  \item subgroups of $($a$)$ or $($b$)$ of finite indices.
\end{enumerate}
Then for any $\alpha >0$, every homomorphism $\Phi \colon$$\Gamma \to \mathrm{Diff}^{1+\alpha}_{+} (\mathrm{S}^1)$ has finite image. 
\end{cor}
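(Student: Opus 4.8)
The plan is to combine the fixed point property on Hilbert spaces (that is, property $(\mathrm{T})$, already a consequence of Theorem~\ref{thm:FSLnZ}) with the classical Thurston--Reeb stability type argument and the Navas--Burger--Monod vanishing of bounded cohomology, promoted to the $L^p$-setting via Theorem~\ref{thm:FFB}. First I would recall that by a theorem of Navas, following work of Ghys, any action of a group $\Gamma$ with property $(\mathrm{T})$ by $C^{1+\alpha}$ diffeomorphisms of $\mathrm{S}^1$ with $\alpha > 1/2$ has finite image; more generally, for $\alpha \in (0,1/2]$ one needs extra input beyond property $(\mathrm{T})$. The standard route for the full range $\alpha > 0$ goes through the bounded Euler class: a $C^{1+\alpha}$-action on the circle with an infinite image, after passing to a finite-index subgroup and a finite cover if necessary, either fixes a point (reducing to an action on the interval, handled separately) or has a minimal invariant Cantor set or acts minimally, and in the minimal case one uses that the action is semi-conjugate to a minimal one and studies the $\Gamma$-equivariant properties of the associated measured lamination.

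The heart of the matter is to rule out infinite-image actions, and here is where I would invoke the new contribution of the paper. For a $C^{1+\alpha}$-diffeomorphism action one has a natural cocycle: the logarithmic derivative $c(\gamma, x) = \log \Phi(\gamma)'(x)$ defines a continuous cocycle, and for $\alpha > 0$ one can form, for a suitable background measure, the derivative cocycle landing in an $L^p$-space or in a Hilbert space of half-densities (the ``$L^2$-half-density'' construction of Reznikov, or the Liouville-current construction). Concretely, the plan is: if the image is infinite, the action has no finite orbit on an appropriate boundary, hence the associated orbital cocycle into an $L^p$-space (built from the Radon--Nikodym cocycle $d(\Phi(\gamma))_* \mu / d\mu$ raised to the power $1/p$, which is an affine isometric action on $L^p(\mathrm{S}^1,\mu)$ by the change-of-variables formula) is \emph{not} bounded, or more precisely the corresponding quasi-cocycle is unbounded, which contradicts property $(\mathrm{F}_{L^p})$ (or $(\mathrm{FF}_{L^p})/\mathrm{T}$). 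The regularity hypothesis $\alpha > 0$ is exactly what guarantees the relevant Radon--Nikodym derivatives lie in $L^p$ for $p$ close to $1$ (via a Hölder/Denjoy-type estimate on the distortion), so the cocycle is well-defined with values in the right Banach space.

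More precisely, I would argue as follows. Suppose $\Phi(\Gamma)$ is infinite. Replacing $\Gamma$ by a finite-index subgroup (allowed by the heredity remarks preceding the corollary) and conjugating, we may assume the action is either minimal or admits an invariant Cantor set; collapsing the gaps of the Cantor set, we reduce to a minimal action by homeomorphisms which, on each interval of the complement, is smooth. In the minimal case, fix the Lebesgue measure $\lambda$ on $\mathrm{S}^1$ and define $\pi(\gamma)\xi = \big((\Phi(\gamma)^{-1})'\big)^{1/p}\cdot (\xi \circ \Phi(\gamma)^{-1})$ on $L^p(\mathrm{S}^1,\lambda)$; this is an isometric linear representation, and $b(\gamma) = \pi(\gamma)\mathbf{1} - \mathbf{1}$ is a cocycle. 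The point is that $\|b(\gamma)\|_p^p = \int_{\mathrm{S}^1} |(\Phi(\gamma)^{-1})'(x)^{1/p} - 1|^p\, dx$ detects the distortion of $\Phi(\gamma)$, and unboundedness of $b$ is equivalent to the image being ``large'' in a quantitative sense; by property $(\mathrm{F}_{L^p})$ the cocycle $b$ is bounded, i.e.\ $\Phi(\Gamma)$ has uniformly bounded distortion, whence by a compactness argument (Arzel\`a--Ascoli in $C^{1}$, using the $C^{1+\alpha}$ bound to upgrade) the closure of $\Phi(\Gamma)$ in $\mathrm{Homeo}_+(\mathrm{S}^1)$ is a compact subgroup, hence conjugate into $\mathrm{SO}(2)$; but then $\Phi$ factors through an abelian quotient of $\Gamma$, and since $\Gamma = EL_n(A)$-type groups have finite abelianization, the image is finite --- a contradiction. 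The hard part is the technical estimate that turns the Hölder hypothesis into an honest $L^p$-bound and, in the Cantor-set case, controlling the behaviour on the collapsed intervals so that the cocycle remains well-defined on the quotient circle; this is where one must be careful, and where the reduction to the superreflexive setting of Theorem~\ref{thm:TtoF} and the vanishing of $H^2_b(\Gamma;\mathbb{R})$ (via property $(\mathrm{FF})$, to kill the bounded Euler class in the remaining semiconjugacy ambiguity) do the work.
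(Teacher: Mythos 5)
The paper's proof is a two-line citation: Theorem~\ref{thm:Navas}, quoted from Navas \cite{Nav2}, already states that a discrete group with property $(\mathrm{F}_{L^p})$ admits no infinite-image homomorphism into $\mathrm{Diff}^{1+\alpha}_{+}(\mathrm{S}^1)$ whenever $\alpha>1/p$. Since Theorem~\ref{thm:FSLnZ} supplies $(\mathrm{F}_{L^p})$ for every $p\in(1,\infty)$ (and the property passes to quotients and finite-index subgroups as noted before the corollary), given any $\alpha>0$ one simply chooses $p>1/\alpha$ and is done. Your proposal does eventually point at the Navas route, but it treats the $\alpha\le 1/2$ range as requiring a separate reconstruction (bounded Euler class, Cantor-set collapsing, Thurston--Reeb stability, Arzel\`a--Ascoli), all of which is unnecessary once Theorem~\ref{thm:Navas} is invoked as a black box.

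Moreover, the explicit cocycle you propose does not work. You set $\pi(\gamma)\xi=\bigl((\Phi(\gamma)^{-1})'\bigr)^{1/p}\,(\xi\circ\Phi(\gamma)^{-1})$ on $L^p(\mathrm{S}^1,\lambda)$ and $b(\gamma)=\pi(\gamma)\mathbf{1}-\mathbf{1}$; but $\mathbf{1}\in L^p(\mathrm{S}^1,\lambda)$ since the circle has finite measure, so $b$ is a genuine coboundary and satisfies $\|b(\gamma)\|_p\le \|\pi(\gamma)\mathbf{1}\|_p+\|\mathbf{1}\|_p=2\|\mathbf{1}\|_p$ for every $\gamma$. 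It is thus automatically bounded and can contradict nothing. The same objection applies to the Radon--Nikodym variant $b(\gamma)=\bigl(d(\Phi(\gamma))_*\mu/d\mu\bigr)^{1/p}-1$, which has $\|\cdot\|_p$ uniformly bounded by $2$. The cocycle Navas actually uses is the Liouville $L^p$ cocycle on $L^p(\mathrm{S}^1\times\mathrm{S}^1)$ displayed just before Corollary~\ref{cor:circle}; it is not a coboundary precisely because the kernel $|2\sin((\theta_1-\theta_2)/2)|^{-2/p}$ has a non-integrable singularity on the diagonal, so $\mathbf{1}$ has no analogue in $L^p$. The H\"older hypothesis $\alpha>1/p$ is what makes the \emph{difference} in the formula integrable. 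Note also that the dependence on $p$ is the opposite of what you wrote: for $\alpha$ close to $0$ one needs $p$ \emph{large} (so that $1/p<\alpha$), not $p$ close to $1$; this is exactly why having $(\mathrm{F}_{L^p})$ for the full range $1<p<\infty$, and not merely for $p$ near $2$, is essential.
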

\begin{cor}\label{cor:bddcohom}
Let $\mathcal{C}$ and $\Gamma$ be the same as in Corollary \ref{cor:circle}. Then the comparison map in degree $2$ 
$$
\Psi^2 \colon H_b^2(\Gamma; B,\rho ) \to 
H^2(\Gamma; B,\rho )
$$
 is injective, for any $B\in \mathcal{C}$ and any isometric representation $\rho$ on $B$ which does $\mathrm{not}$ contain the trivial representation. 

\end{cor}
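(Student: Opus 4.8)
The plan is to reduce the injectivity of $\Psi^2$ to the property $(\mathrm{FF}_B)/\mathrm{T}$ established in Theorem \ref{thm:FFB}, via the standard dictionary between degree-two bounded cohomology and (quasi-)cocycles. Recall that a class in $H^2(\Gamma; B,\rho)$ is represented by a $2$-cocycle $c \colon \Gamma \times \Gamma \to B$, and it lies in the image of $H^2_b$ precisely when $c$ is cohomologous to a bounded cocycle; thus $\ker \Psi^2$ consists of classes $[c]$ with $c$ bounded and $c = \delta^1 b$ for some (unbounded, a priori) $1$-cochain $b \colon \Gamma \to B$. So it suffices to show: if $b \colon \Gamma \to B$ is a $1$-cochain whose coboundary $\delta^1 b$ is bounded, then $b$ is cohomologous to a \emph{bounded} $1$-cochain, i.e.\ $b = b_0 + \delta^0 v$ with $b_0$ bounded and $v \in B$. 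This is exactly the statement that every \emph{affine quasi-cocycle} for $(\rho,B)$ is at bounded distance from an affine cocycle that is itself bounded.

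The key step is to recognize that a $1$-cochain $b$ with bounded coboundary is precisely the translation part of an \emph{affine quasi-action} of $\Gamma$ on $B$: setting $A(s)\xi = \rho(s)\xi + b(s)$, the defect $A(st) - A(s)A(t)$ equals the value $\delta^1 b(s,t)$, which is uniformly bounded by hypothesis; hence $A$ is an affine quasi-action whose linear part is $\rho$. Since $\rho$ does not contain the trivial representation, property $(\mathrm{FF}_B)/\mathrm{T}$ applies and yields that some (equivalently every) orbit of $A$ is bounded. Concretely, $\sup_{s \in \Gamma} \| A(s) \cdot 0 \| = \sup_{s} \|b(s)\| < \infty$ after correcting by a suitable translation; more precisely, boundedness of an orbit means there is $v \in B$ with $\sup_s \| A(s) v - v\| < \infty$, and expanding $A(s)v = \rho(s) v + b(s)$ gives $\sup_s \| b(s) - (v - \rho(s) v) \| < \infty$, i.e.\ $b - \delta^0 v$ is a bounded $1$-cochain. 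Therefore $[c] = [\delta^1 b] = [\delta^1(b - \delta^0 v)]$ is represented by the coboundary of a bounded cochain, so its image in $H^2(\Gamma; B,\rho)$ under a bounded representative is the class of the bounded cocycle $\delta^1(b-\delta^0 v)$ — which shows $[c]$ already vanishes, or rather that $\Psi^2$ is injective since any class killed by $\Psi^2$ arises this way.

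To make the last paragraph precise I would argue at the level of the comparison map directly. Let $\beta \in H^2_b(\Gamma; B,\rho)$ with $\Psi^2(\beta) = 0$; pick a bounded cocycle $c$ representing $\beta$, so $c = \delta^1 b$ in ordinary cohomology for some $1$-cochain $b$. As above, $A(s) = \rho(s)\cdot + b(s)$ is an affine quasi-action with linear part $\rho$; by $(\mathrm{FF}_B)/\mathrm{T}$ (using that $\rho$ has no non-zero invariant vector, so the "trivial part" is absent) the cochain $b$ is within bounded distance of $\delta^0 v$ for some $v\in B$. Then $b' := b - \delta^0 v$ is a bounded $1$-cochain with $\delta^1 b' = \delta^1 b = c$, so $c$ is the coboundary of a bounded cochain in the \emph{bounded} complex, whence $\beta = 0$ in $H^2_b$. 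The heredity remarks preceding the corollary (passage to quotients and to finite-index subgroups, the latter via $p$-induction as in \cite[\S 8]{BFGM}) transfer Theorem \ref{thm:FFB} to all $\Gamma$ of the listed forms, so the argument applies verbatim.

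The main obstacle is verifying that property $(\mathrm{FF}_B)/\mathrm{T}$ — which is stated in terms of boundedness of orbits of genuine affine quasi-\emph{actions} (where the defect in the \emph{multiplicative} relation is controlled) — is exactly what is needed to straighten a $1$-cochain with bounded \emph{additive} coboundary, i.e.\ checking that "bounded coboundary of $b$" and "$A(s)=\rho(s)\cdot+b(s)$ is an affine quasi-action" are the same condition, and that the two notions of "modulo trivial part" match (the hypothesis that $\rho$ omits the trivial representation must be shown to kill the ambiguity by invariant vectors that $(\mathrm{FF}_B)/\mathrm{T}$ otherwise permits). Superreflexivity of $B$ (true for $B \in \bigcup_{1<p<\infty} L^p \cup [\mathcal{H}]$) should be invoked where needed, exactly as in the deduction that $(\mathrm{FF}_B)/\mathrm{T}$ implies $(\mathrm{F}_B)$. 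Once that dictionary is in place the cohomological bookkeeping is routine.
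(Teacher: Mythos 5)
Your proof is correct and is essentially the paper's argument: the paper compresses it by citing the isomorphism $\mathrm{Ker}\,\Psi^2 \cong \{\text{quasi-}\rho\text{-cocycles}\}/(\{\rho\text{-cocycles}\}+\{\text{bounded maps}\})$ from Monod's book, and you rederive this dictionary explicitly before applying $(\mathrm{FF}_B)/\mathrm{T}$. One small simplification: since $\rho$ has no nonzero invariant vector, $B^{\rho(\Gamma)}=0$, so $(\mathrm{FF}_B)/\mathrm{T}$ directly gives that $b(\Gamma)$ itself is bounded (you may take $v=0$ rather than adjusting by a coboundary $\delta^0 v$).
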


These applications above shall be discussed in Section \ref{sec:appli}. Corollary \ref{cor:circle} states that the group $\Gamma$ cannot act on the circle in non-trivial way with certain regularity condition, and it can be seen as an extension of Navas' theorem \cite{Nav1} for Kazhdan groups for the case of $\alpha > 1/2$. In the case of subgroups of finite indices in $SL_n (\mathbb{Z})$, stronger result, which shows a similar rigidity for orientation preserving homeomorphisms on the circle, is proved by D. Witte \cite[Corollary 2.4]{Wit}. However, the proof needs the Margulis normal subgroup theorem, and one cannot apply the argument to the case of universal lattices. Also, we note that if $\Gamma$ has a torsion, then the proof of Corollary~\ref{cor:circle} is easier. But that by Selberg's lemma, one can take $\Gamma$ as in option (c) without torsions. 

Corollary \ref{cor:bddcohom} can be seen as 
some generalization of \cite[Theorem 21]{BM}. However, in the case of lattices $\Gamma$ of higher rank algebraic groups $G$, much stronger result is known. Indeed, Monod and Shalom \cite[Theorem 1.4]{MS} have proved the theorem that unless $\pi_1(G)$ is infinite and the local field is $\mathbb{R}$, the second bounded cohomology of $\Gamma$ with separable coefficient Banach modules always vanishes. By comparing Corollary \ref{cor:bddcohom} with the theorem above, the author would like to conjecture the following: 
\begin{conj}
Let $\Gamma$ be the same group as in Corollary $\ref{cor:circle}$. 
Then the second bounded cohomology 
of $\Gamma$ vanishes for every separable coefficient Banach module.
\end{conj}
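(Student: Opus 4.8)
The plan is to attack the conjecture through the boundary-theoretic machinery for bounded cohomology developed by Burger--Monod \cite{BM} and Monod--Shalom \cite{MS}. Recall that if a group $\Gamma$ admits a $\Gamma$-space $B$ which is \emph{amenable} in Zimmer's sense and on which the diagonal action $\Gamma\curvearrowright B\times B$ is doubly ergodic, then for every coefficient $\Gamma$-module $E=F^{*}$ with $F$ separable the bounded cohomology $H^{2}_{b}(\Gamma;E)$ is computed by the complex $0\to E^{\Gamma}\to L^{\infty}_{w*}(B;E)^{\Gamma}\to L^{\infty}_{w*}(B^{2};E)^{\Gamma}\to L^{\infty}_{w*}(B^{3};E)^{\Gamma}\to\cdots$; and when $B$ is a \emph{strong boundary}, i.e.\ $\Gamma\curvearrowright B\times B$ is doubly ergodic with coefficients in every such $E$, one expects, exactly as in the higher rank case of \cite{MS}, that this resolution forces $H^{2}_{b}(\Gamma;E)$ to reduce to a contribution of the invariant part $E^{\Gamma}$ that can be handled separately. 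So the plan splits into: (1) constructing a strong boundary for the universal lattice, and (2) an essentially algebraic vanishing statement for the $E^{\Gamma}$-part of degree-two cohomology.

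I would first reduce to $\Gamma=SL_n(\mathbb{Z}[x_1,\ldots,x_k])$ itself. For a finite-index subgroup one transfers the statement up and down by the induction (transfer) procedure in bounded cohomology, paralleling the $p$-induction used for $(\mathrm{F}_B)$ in \cite[\S 8]{BFGM}. The quotient case is more delicate, since universal lattices obey no normal subgroup theorem, so a separate input about $H^{1}_{b}$ of possible normal subgroups would be needed here (or one restricts to quotients for which this is available). Thus I concentrate on $\Gamma=SL_n(\mathbb{Z}[x_1,\ldots,x_k])$ with $n\geq 4$.

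For step (1), no ambient Lie group is available to supply a Furstenberg boundary, so the only candidate is the Poisson boundary $(B,\nu)$ of a generating, symmetric, spread-out probability measure $\mu$ on $\Gamma$. That $\Gamma\curvearrowright B$ is Zimmer-amenable is a theorem of Zimmer, and double ergodicity of $\Gamma\curvearrowright B\times B$ with real coefficients is due to Kaimanovich. The heart of the matter is to upgrade this to double ergodicity \emph{with coefficients in every separable dual Banach $\Gamma$-module}. This is where I would try to feed in the rigidity of the present paper: a $\mu$-stationary (harmonic) strengthening of property $(\mathrm{FF}_B)/\mathrm{T}$ — morally a Banach-space version of Monod's property $(\mathrm{TT})/\mathrm{T}$ \cite{Mon} — derived from the relative property $(\mathrm{T}_B)$ of Theorem \ref{thm:Tsemi} together with Shalom's argument and Vaserstein's bounded generation, should obstruct the existence of the nonconstant equivariant $L^{\infty}_{w*}(B^{2};E)$-sections whose absence is exactly double ergodicity with coefficients. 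Step (2) — the contribution of $E^{\Gamma}$, which for $E=\mathbb{R}$ is the ordinary $H^{2}(\Gamma;\mathbb{R})$ — would be treated using the Suslin-type description of the (co)homology of $SL_n$ over the polynomial ring $\mathbb{Z}[x_1,\ldots,x_k]$, together with property $(\mathrm{T})$, which already gives $H^{1}(\Gamma;\mathbb{R})=0$.

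The main obstacle is plainly the coefficient upgrade in step (1). Every Banach-space ingredient available here — Mazur maps, ultrapowers, and above all superreflexivity — is tied to the geometry of $L^p$ and Hilbert spaces, so there is at present no mechanism yielding double ergodicity with coefficients in an arbitrary separable dual Banach space, nor even a quasified fixed-point property $(\mathrm{FF}_B)/\mathrm{T}$ at that generality. I expect that a proof of the conjecture will require either a genuinely geometry-free argument for the relative property $(\mathrm{T}_B)$ — valid for all Banach $B$, or at least all $B$ of finite cotype, in the spirit of the questions of V.\ Lafforgue recalled above — or a direct structural analysis of the Poisson boundary of the universal lattice that bypasses the fixed-point route entirely.
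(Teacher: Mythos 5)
This statement is a \emph{conjecture}, explicitly left open in the paper; the author offers no proof of it, only motivation by analogy with the Monod--Shalom vanishing theorem for higher-rank lattices. So there is no ``paper's own proof'' to compare against. What you have written is a strategy outline, not a proof, and you yourself flag the decisive gap, so let me just make that gap precise and add one further obstruction you underweight.

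The gap you name --- upgrading Kaimanovich's double ergodicity of $\Gamma\curvearrowright B\times B$ (Poisson boundary, real coefficients) to double ergodicity with coefficients in \emph{every} separable dual Banach module --- is genuine and is not bridged by anything in the paper. The machinery here (relative $(\mathrm{T}_B)$ via Dixmier unitarization and a spectral measure, Shalom's reduced-cohomology/ultralimit argument, Vaserstein's bounded generation) produces $(\mathrm{FF}_B)/\mathrm{T}$ only for $B$ in $L^p$ $(1<p<\infty)$ or $[\mathcal{H}]$; superreflexivity is used at every step (duality mapping, Chebyshev center, the $B=B_0\oplus B_1$ splitting of Proposition~\ref{prop:decomp}, ultrastability of the relevant classes). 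None of this extends to an arbitrary separable dual module, and there is no ``harmonic'' or stationary version of $(\mathrm{FF}_B)/\mathrm{T}$ in the paper to feed into the $L^{\infty}_{w*}(B^2;E)^{\Gamma}$ resolution. So step (1) is not a reduction but the entire open problem.

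You also understate a second obstruction: even granting a strong boundary, the Monod--Shalom proof of degree-two vanishing for higher-rank lattices is not a formal consequence of double ergodicity with coefficients. It exploits the concrete structure of the Furstenberg boundary $G/P$ of the ambient algebraic group --- the Weyl group, minimal parabolics, and the product/irreducibility structure --- to kill the surviving cohomology in the resolution. Universal lattices have no ambient algebraic group (as the paper stresses, the Margulis normal subgroup theorem rules out an arithmetic realization), so the Poisson boundary of $SL_n(\mathbb{Z}[x_1,\ldots,x_k])$ carries no analogue of this structure, and there is no reason the final vanishing step goes through. Finally, your step (2) is also not closed: Corollary~\ref{cor:bddcohom} gives injectivity of $\Psi^2$ only when $\rho$ does \emph{not} contain the trivial representation, so knowing $H^2(\Gamma;\mathbb{R})=0$ would not by itself give $H^2_b(\Gamma;\mathbb{R})=0$. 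In short, your outline is a reasonable reading of why the author finds the conjecture plausible, but it does not constitute a proof and the paper does not contain one.
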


In addition, we have defined the relative Kazhdan constant for property 
$(\mathrm{T}_{[\mathcal{H}]})$ and performed a certain estimate. For details, see Appendix. 

\begin{prop}\label{prop:relconst}
With the same notation as one in  Theorem \ref{thm:TtoF}, let 
$G =EL_2 (A_k) \ltimes A_{k}^2 $ and $N=A_{k}^2$. Set $F$ be 
the set of all unit elementary matrices in 
$G$ $(\subset SL_3(A_k))$. 
Then the inequality 
\begin{equation*}
\overline{\mathcal{K}}(G , N ; F;  M) > 
(15k +100)^{-1}M^{-6} 
\end{equation*}
holds.  
In the case of that $k=0$, 
$
\overline{\mathcal{K}}(SL_2(\mathbb{Z})\ltimes \mathbb{Z}^2 , \mathbb{Z}^2 ; F;  M) > (21M^6)^{-1}
$ holds. Here the symbol $\overline{\mathcal{K}}(G , N ; F;  M)$ denotes the $\mathrm{generalized\ relative}$ $\mathrm{Kazhdan\ constant\ for}$ $\mathrm{uniformly\ bounded\ representations}$, which is defined in Definition \ref{def:extended const}. 
\end{prop}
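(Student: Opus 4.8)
The plan is to re-run the proof of Theorem~\ref{thm:Tsemi} for the pair $G:=EL_2(A_k)\ltimes A_k^2$, $N:=A_k^2$, this time keeping explicit track of every constant and, above all, of every power of the uniform bound $M$. Unwinding Definition~\ref{def:extended const}, the claim amounts to the following: for every representation $\rho$ of $G$ on a Hilbert space $H$ with $\sup_{g\in G}\|\rho(g)\|\le M$ and every $\xi\in H$, if $P_N$ denotes the orthogonal projection onto the $\rho(N)$-fixed vectors, then $\sup_{s\in F}\|\rho(s)\xi-\xi\|\ge (15k+100)^{-1}M^{-6}\,\|\xi-P_N\xi\|$, and $\ge (21M^6)^{-1}\|\xi-P_N\xi\|$ when $k=0$.

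The first move is to make $\rho|_N$ unitary. Since $N=A_k^2$ is abelian, hence amenable, averaging $\langle\cdot,\cdot\rangle$ against an invariant mean on $N$ yields a Hilbert inner product $\langle\cdot,\cdot\rangle_\#$ with $M^{-1}\|v\|\le\|v\|_\#\le M\|v\|$ for all $v$, with respect to which $\rho|_N$ is unitary and $\|\rho(g)\|_\#\le M^2$ for every $g\in G$. On $(H,\langle\cdot,\cdot\rangle_\#)$ the spectral theorem supplies a projection-valued measure $E$ on $\widehat N=\widehat{A_k^2}$ with $E(\{\mathbf 1\})H$ equal to the $\rho(N)$-fixed subspace. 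The structural point I would emphasise is that, because $\langle\cdot,\cdot\rangle_\#$ is $\rho(N)$-invariant and every $g\in EL_2(A_k)$ normalises $N$, conjugation by $\rho(g)$ is an algebra automorphism of the abelian von Neumann algebra $\rho(N)''$; it therefore sends the (idempotent, hence self-adjoint) projection $E(B)$ to another spectral projection, and one obtains $\rho(g)E(B)\rho(g)^{-1}=E(g_\bullet B)$ for the natural $EL_2(A_k)$-action $g_\bullet$ on $\widehat{A_k^2}$ --- even though $\rho(g)$ itself is not $\#$-unitary. This identity is the only ingredient the underlying argument really needs.

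The core is then a quantitative concentration statement on $\widehat{A_k^2}$: writing $\mu=\langle E(\cdot)\xi,\xi\rangle_\#$, if $\mu$ assigns mass $\ge\delta$ to $\widehat{A_k^2}\setminus\{\mathbf 1\}$ then $\sup_{s\in F}\|\rho(s)\xi-\xi\|_\#$ is bounded below by an explicit multiple of $\delta^{1/2}$, with a loss linear in $k$ and of bounded polynomial degree in $M$. Here I would follow the Burger--Shalom mechanism behind Theorem~\ref{thm:Tsemi}, driven by the Steinberg commutator relations $[e_{12}(\lambda),e_{23}(\mu)]=e_{13}(\lambda\mu)$ in $SL_3(A_k)$ together with the shears $e_{12}(\lambda)_\bullet$, $e_{21}(\lambda)_\bullet$ on $\widehat{A_k^2}$ for $\lambda$ ranging over the ring generators $1,x_1,\dots,x_k$: these $k+1$ generators are exactly what forces a spectral measure supported away from $\mathbf 1$ to be displaced by one of the finitely many matrices in $F$, and counting them is the source of the factor $15k+100$. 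For $k=0$ this step is the classical analysis of $SL_2(\mathbb{Z})\ltimes\mathbb{Z}^2\triangleright\mathbb{Z}^2$ acting on $\mathbb{T}^2$, re-run with the $\#$-norm bookkeeping, whence the sharper constant $21$. To keep the argument effective for a merely uniformly bounded $\rho$, every step must be reduced to displacements $\|\rho(s)\xi-\xi\|$ of the single fixed vector $\xi$ under $s\in F$: whenever the argument would conjugate an elementary matrix by another, one uses the Steinberg relations to rewrite the conjugate as a bounded-length product of elementary matrices, absorbing the uniform bound at a single factor of $M$ per reduction (using $\|\rho(g)\|\le M$ for all $g\in G$, independently of word length). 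The exponent $6$ then falls out of the bookkeeping, with dominant contributions $6=1+4+1$: one factor of $M$ in passing from $\|\cdot\|$ to $\|\cdot\|_\#$, another on the way back, and four from the fact that with respect to $\|\cdot\|_\#$ the matrices $e_{12}(\lambda),e_{21}(\lambda)$ act by $M^2$-bounded, not unitary, operators, so the comparison of $\mu$ with its $g_\bullet$-image is controlled only up to $M^4$.

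The step I expect to be the main obstacle is precisely this last one. In the classical unitary proof one simply pushes the spectral measure $\mu$ forward by the unitary $\rho(g)$ and compares it with $\mu$; here $\rho(g)$ is only $M^2$-bounded for $\langle\cdot,\cdot\rangle_\#$ and its $\#$-adjoint is not controlled by the hypothesis, so $g_\bullet\mu$ and $\mu$ cannot be compared directly. The way around is to never let the adjoint appear: from $\rho(g)E(g_\bullet^{-1}B)\xi-E(B)\xi=E(B)(\rho(g)\xi-\xi)$ one extracts only the one-sided inequality $\mu(g_\bullet^{-1}B)\ge M^{-4}\bigl(\mu(B)^{1/2}-\|\rho(g)\xi-\xi\|_\#\bigr)^2$, a weakened, factor-$M^4$ quasi-invariance, and feeds this into the Burger--Shalom ping-pong --- which, being a finite argument involving only bounded-length words in the elementary generators, still goes through. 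The delicate part is to organise these reductions so that exactly six powers of $M$, and no more, accumulate, and then to carry out the arithmetic that converts the classical constant, the slack in the inequality above, and the count of ring generators into the explicit bounds $15k+100$ and $21$.
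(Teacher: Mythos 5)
Your proposal departs from the paper's argument at the crucial structural step, and the departure opens a genuine gap. The paper never works with the Hilbert norm obtained by averaging over $N$; instead it builds a \emph{uniformly smooth non-Hilbert} renorm $\|\cdot\|_{\rho}$ of $\mathcal{H}$ as the dual of $\|\phi\|_{\rho*}:=\sup_{g\in\Gamma}\|\rho^{\dagger}(g)\phi\|$. This norm is compatible with the Hilbert norm with ratio $\le M$, makes $\rho$ isometric on \emph{all} of $G$, and has modulus of smoothness $r(\tau)\le M^2\tau^2/2$; the duality mapping and the quantitative Lemma~A.5 of Benyamini--Lindenstrauss then yield a dual vector $\xi^*$ that is $4M^3\varepsilon$-almost-invariant whenever $\xi$ is $M\varepsilon$-almost-invariant. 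Dixmier's similarity $T$ is used only to unitarize $\rho|_N$, and the measure of interest is the \emph{signed} measure $\int f\,d\mu=\langle T^{-1}\sigma(f)T\xi,\xi^*\rangle$, of which one takes the positive part of the real part. The payoff is that for $g\in F_2$ the comparison of $\mu_+$ with its shear-pushforward is \emph{additive}: $|\mu_+(\hat g Z)-\mu_+(Z)|\le 5M^6\varepsilon$, with a separate estimate $\mu_+(D_0)\le 4M^7\varepsilon^2$; this is what produces the final exponent $6$ and the constants $15k+100$ and $21$.

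Your route, by contrast, renorms only so as to make $\rho|_N$ unitary and accepts that the shears $\rho(g)$, $g\in F_2$, are merely $M^2$-bounded for $\langle\cdot,\cdot\rangle_\#$. You correctly notice that this destroys the usual two-sided spectral comparison and correctly extract the one-sided inequality $\mu(g_\bullet^{-1}B)\ge M^{-4}\bigl(\mu(B)^{1/2}-\|\rho(g)\xi-\xi\|_\#\bigr)^2$; the observation that $\rho(g)E(B)\rho(g)^{-1}$ is again a (self-adjoint) spectral projection, because idempotents in the abelian von Neumann algebra $\rho(N)''$ are automatically self-adjoint, is also correct. But this comparison is a \emph{multiplicative} quasi-invariance: squaring out gives $\mu(B)\le M^4\mu(g_\bullet^{-1}B)+O(M^2\varepsilon)$, so each shear in the Burger--Shalom ping-pong multiplies the accumulated bound by $M^4$ rather than adding an $O(M^6\varepsilon)$ error. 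Since the ping-pong on $\mathbb{T}^2$ (and a fortiori the $k$-variable valuation argument of Kassabov) uses several shear steps to push each wedge $D_i$ into $D_0$, these factors compound, and the terms of order $M^2\varepsilon$ get amplified by the accumulated powers of $M^4$. There is no way the final inequality comes out with the exponent $6$ by the $1+4+1$ bookkeeping you sketch; the dominant power of $M$ in your scheme would be strictly larger, and the numerical constants would not be $15k+100$ and $21$. You have identified the obstacle honestly, but the paragraph where you claim "the exponent $6$ then falls out" is precisely where the argument has not been supplied, and there is good reason to believe it cannot be supplied along this route. The missing idea is the paper's dual-norm renorming making $\rho$ isometric on all of $G$, which turns the shear comparison additive and is the mechanism that keeps the power of $M$ at $6$.
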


\textit{Organization of this paper}: In Section~\ref{sec:Pre} we recall defitions of $(\mathrm{T}_B)$ and $(\mathrm{F}_B)$, and other basic tools in this paper. Section~\ref{sec:TtoF} is devoted to the proof of Theorem~\ref{thm:TtoF}. In Section~\ref{sec:relT}, we present an outlined proof of Theorem~\ref{thm:Tsemi}. In Section~\ref{sec:FSLnZ}, we explain the idea, which is originally due to Shalom, of utilizing reduced cohomology, and deduce Theorem~\ref{thm:FSLnZ} from this idea, Vaserstein's bounded generation, Theorem~\ref{thm:Tsemi}, and Theorem~\ref{thm:TtoF}. Section~\ref{sec:FFB} is for quasifications of $(\mathrm{F}_B)$. There we define $(\mathrm{FF}_B)$ and $(\mathrm{FF}_B)/\mathrm{T}$, and prove Theorem~\ref{thm:FFB}. In Section~\ref{sec:appli}, we discuss certain applications (Corollary~\ref{cor:circle} and Corollary~\ref{cor:bddcohom}) of our results. In Appendix, we define the generalized Kazhdan constant for uniformly bounded representations, and provide a quantitative and detailed proof of Theorem~\ref{thm:Tsemi}, namely, that of Proposition~\ref{prop:relconst}.

\bigskip

\noindent
\textit{\textbf{Notation and convention.}}

Throughout this paper, we assume all rings are associative, all representations 
 and actions of a topological group are strongly continuous, and all subgroups of a topological 
group are closed. We also assume \textit{all topological groups in this paper are locally compact and second countable}. We let 
$\Gamma$, $G$ and $N$ be topological groups, $B$ be a Banach 
space, 
$\mathcal{C}$ be a class of Banach spaces, and 
$\mathcal{H}$ be an arbitrary Hilbert space. For a Banach space $B$, we define  $S(B)$ as the unit sphere, $\mathbb{B}(B)$ as the Banach algebra of all bounded linear operators on $B$, and $\langle  \cdot , \cdot \rangle$ as the duality  
$B \times B^* \to \mathbb{C}$. In this paper, we shall define the following properties in terms 
of $B$: \textit{relative }$(T_B)$, $(T_B)$; 
\textit{relative }$(F_B)$, $(F_B)$; the \textit{Shalom property for }$(F_B)$; 
\textit{relative }$(FF_B)$, $(FF_B)$, $(FF_B)/T$; 
and the \textit{Shalom property for }$(FF_B)$. If we let $(P_B)$ represent any of 
these properties, then we define the property $(P_{\mathcal{C}})$ 
in terms of $\mathcal{C}$ as follows: having $(P_{\mathcal{C}})$ stands for 
having $(P_{B})$ for all $B \in \mathcal{C}$.

\section*{acknowledgments}
The author would like to thank his supervisor Narutaka Ozawa for introducing 
him to this topic, and Yasuyuki Kawahigashi for comments. He is grateful to Bachir Bekka for the symbol $[\mathcal{H}]$, to Yves de Cornulier for fruitful comments, to Nicolas Monod for the symbol $(\mathrm{FF}_B)/\mathrm{T}$, to Andr\'{e}s Navas for suggesting him stating Corollary \ref{cor:circle} explicitly, and to Mamoru Tanaka for pointing out a mistake on stability under ultralimits in the previous version of this paper. He also thanks Alex Furman for the reference \cite{Wit}, Uzy Hadad for corrections of some errors, Martin Kassabov for conversations on noncommutative universal lattices, and Vincent Lafforgue for arguments and drawing my attention to non-superreflexive cases. Finally, he would like to express his gratitude to the referee, whose comments have improved this paper.

\section{\textbf{Preliminaries}}\label{sec:Pre}
\subsection{\textbf{Superreflexivity and property}$(\mathrm{T}_B)$}
\begin{defn}$($\cite{BFGM}$)$ \label{def:propertyTB}
Let $B$ be a Banach space. 
\begin{itemize}
  \item A pair  $G \triangleright N$ of groups is 
  said to have \textit{relative property} $\mathit{(T_B)}$  if for any 
  isometric representation $\rho$ of $G$ on $B$, the 
  isometric representation $\rho '$ on the quotient Banach space  $B / B^{\rho (N)}$, naturally induced by $\rho$, does not admit 
  almost invariant vectors. Here $B^{\rho (N)}$ 
  stands for the subspace of $B$ of all $\rho(N)$-invariant vectors. 
  \item  A group $\Gamma$ is said to have \textit{property} 
  $\mathit{(T_B)}$ if $\Gamma \triangleright \Gamma$ has 
  relative $(\mathrm{T}_B)$.
\end{itemize}
\end{defn}

In the case of that $B$ is superreflexive, there exists a natural complement 
in $B$ of $B^{\rho (N)}$ (Proposition~\ref{prop:decomp}). To see this, we start with the definition of superreflexivity. 

\begin{defn}\label{def:modulus}
Let $B$ be a Banach space. 
\begin{itemize}
   \item The space $B$ is said to be \textit{uniformly convex} (or \textit{uc}) if for all 
   $0< \varepsilon <2$,  $d_{\| \cdot \|}(\varepsilon) >0$ holds. Here for  
   $0< \varepsilon <2$, we define 
   \begin{equation*}
   d_{\| \cdot \|} 
   (\varepsilon)= \inf \left\{ 1- \frac{\| \xi +\eta \| }{2} : \| \xi \| \leq 1 , 
   \ 
   \| \eta \| \leq 1 ,\ \mathrm{and}\  \| \xi -\eta \| \geq \varepsilon  \right\}.
   \end{equation*}
   \item The space $B$ is said to be \textit{uniformly smooth} 
   (or \textit{us})
    if $\displaystyle{\lim_{\tau \to 0} r_{\| \cdot \|} (\tau)/ \tau } =0$ 
    holds. 
   Here for $\tau >0$, we define 
   \begin{equation*}
   r_{\| \cdot \|} 
   (\tau)= \sup \left\{ \frac{\| \xi +\eta \| +\| \xi -\eta \| }{2} -1 : 
   \| \xi \| \leq 1 ,\, \| \eta \| \leq \tau \right\}.
   \end{equation*}
   \item  The space $B$ is said to be \textit{ucus} if $B$ is uc and us.
   \item  The space $B$ is said to be \textit{superreflexive} if 
   it is isomorphic to some ucus Banach space.
\end{itemize}
   We call $d$ and $r$ the \textit{modulus of convexity} and that 
   \textit{of smoothness} respectively.
\end{defn}
We refer to a book of Y. Benyamini and J. Lindenstrauss \cite[\S A]{BL} for details on ucus Banach spaces. 

\begin{lem}\label{lem:lindenst}$($\cite{BL}$)$
Let $(B, \| \cdot \|)$ be a Banach space. Then for any $\tau >0$, 
$r_{\| \cdot \|} (\tau)$
$
= \sup_{0< \varepsilon <2} \left\{ \varepsilon \tau /2 -
 d_{\| \cdot \|_{*}}(\varepsilon) \right\}
$. In particular, $B$ is us if and only if $B^*$ is uc.
\end{lem}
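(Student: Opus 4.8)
The plan is to prove the displayed duality formula by a Hahn--Banach dualisation followed by an interchange of suprema, and then to read off the ``in particular'' clause. Throughout I work with real scalars (for complex $B$ one inserts real parts everywhere). Fix $\tau>0$. Using the Hahn--Banach description $\|\zeta\|=\sup\{\mathrm{Re}\,\langle\zeta,h\rangle:h\in B^*,\ \|h\|\le1\}$, one has, for all $\xi,\eta\in B$,
\[
\|\xi+\eta\|+\|\xi-\eta\|=\sup_{\|f\|\le1,\ \|g\|\le1}\mathrm{Re}\left(\langle\xi+\eta,f\rangle+\langle\xi-\eta,g\rangle\right)=\sup_{\|f\|\le1,\ \|g\|\le1}\mathrm{Re}\left(\langle\xi,f+g\rangle+\langle\eta,f-g\rangle\right).
\]
Plugging this into the definition of $r_{\|\cdot\|}(\tau)$, interchanging the supremum over $\{\|\xi\|\le1,\ \|\eta\|\le\tau\}$ with the one over $\{\|f\|,\|g\|\le1\}$, and using $\sup_{\|\xi\|\le1}\mathrm{Re}\,\langle\xi,f+g\rangle=\|f+g\|$ together with $\sup_{\|\eta\|\le\tau}\mathrm{Re}\,\langle\eta,f-g\rangle=\tau\|f-g\|$, I obtain
\[
2\left(r_{\|\cdot\|}(\tau)+1\right)=\sup\{\|f+g\|+\tau\|f-g\|:f,g\in B^*,\ \|f\|\le1,\ \|g\|\le1\}.
\]

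Next I group the right-hand side by the value $\varepsilon:=\|f-g\|$, which ranges over $[0,2]$: writing $\Phi(\varepsilon):=\sup\{\|f+g\|:\|f\|,\|g\|\le1,\ \|f-g\|=\varepsilon\}$, the supremum above equals $\sup_{0\le\varepsilon\le2}(\Phi(\varepsilon)+\tau\varepsilon)$, so everything comes down to the identity $\Phi(\varepsilon)=2(1-d_{\|\cdot\|_*}(\varepsilon))$. By the definition of the modulus of convexity of $B^*$, $d_{\|\cdot\|_*}(\varepsilon)=1-\tfrac12\sup\{\|f+g\|:\|f\|,\|g\|\le1,\ \|f-g\|\ge\varepsilon\}$, so it is enough to check that this last supremum is unchanged if ``$\ge\varepsilon$'' is replaced by ``$=\varepsilon$''. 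That is a short convexity computation: given $f,g$ in the unit ball of $B^*$ with $\|f-g\|=\varepsilon'>\varepsilon$, set $t=\varepsilon/\varepsilon'\in(0,1)$ and $f'=\tfrac{1+t}{2}f+\tfrac{1-t}{2}g$, $g'=\tfrac{1-t}{2}f+\tfrac{1+t}{2}g$; then $f'$ and $g'$ are convex combinations of $f$ and $g$, hence lie in the unit ball, while $f'-g'=t(f-g)$ has norm $\varepsilon$ and $f'+g'=f+g$. Substituting $\Phi(\varepsilon)=2-2d_{\|\cdot\|_*}(\varepsilon)$ back gives $2(r_{\|\cdot\|}(\tau)+1)=2+\sup_{0\le\varepsilon\le2}(\tau\varepsilon-2d_{\|\cdot\|_*}(\varepsilon))$, i.e. $r_{\|\cdot\|}(\tau)=\sup_{0\le\varepsilon\le2}(\varepsilon\tau/2-d_{\|\cdot\|_*}(\varepsilon))$. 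Restricting to $(0,2)$ changes nothing: the term at $\varepsilon=0$ equals $0\le r_{\|\cdot\|}(\tau)$ (take $\eta=0$), and $\varepsilon=2$ can be dropped by continuity.

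For the ``in particular'' clause, recall that $B$ is us if and only if $r_{\|\cdot\|}(\tau)/\tau\to0$ as $\tau\to0^+$, that $B^*$ is uc if and only if $d_{\|\cdot\|_*}(\varepsilon)>0$ for every $\varepsilon\in(0,2)$, and that $\varepsilon\mapsto d_{\|\cdot\|_*}(\varepsilon)$ is nondecreasing (enlarging $\varepsilon$ shrinks the constraint set). From the formula, $r_{\|\cdot\|}(\tau)/\tau=\sup_{0<\varepsilon<2}(\varepsilon/2-d_{\|\cdot\|_*}(\varepsilon)/\tau)$. If $B^*$ is not uc, choose $\varepsilon_0\in(0,2)$ with $d_{\|\cdot\|_*}(\varepsilon_0)=0$; then $r_{\|\cdot\|}(\tau)/\tau\ge\varepsilon_0/2>0$ for all $\tau$, so $B$ is not us. Conversely, if $d_{\|\cdot\|_*}>0$ on $(0,2)$, fix $\delta\in(0,1)$ and put $\varepsilon_0=2\delta$: for $\varepsilon\le\varepsilon_0$ the bracket is at most $\varepsilon_0/2=\delta$, while for $\varepsilon>\varepsilon_0$ it is at most $1-d_{\|\cdot\|_*}(\varepsilon_0)/\tau$, which is $\le\delta$ once $\tau$ is small enough; hence $\limsup_{\tau\to0^+}r_{\|\cdot\|}(\tau)/\tau\le\delta$ for every $\delta>0$, so $B$ is us. The dualisation and the endpoint bookkeeping are routine; I expect the only genuinely delicate point to be the identity $\Phi(\varepsilon)=2(1-d_{\|\cdot\|_*}(\varepsilon))$, i.e. the reduction from ``$\|f-g\|\ge\varepsilon$'' to ``$\|f-g\|=\varepsilon$'' inside the defining supremum of the modulus of convexity; without it the exchange of suprema yields only one of the two inequalities. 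As the paper merely cites \cite{BL} for this lemma, presenting the argument at this level of detail should suffice.
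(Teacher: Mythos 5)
The paper gives no proof of this lemma and simply cites \cite{BL}; your argument is the standard Lindenstrauss dualisation that one finds there and in Lindenstrauss--Tzafriri, so there is nothing in the paper to compare against. Your derivation is correct: the Hahn--Banach rewriting, the interchange of suprema, the reduction to the identity $\Phi(\varepsilon)=2\bigl(1-d_{\|\cdot\|_*}(\varepsilon)\bigr)$ via the convex-combination trick replacing ``$\|f-g\|\ge\varepsilon$'' by ``$\|f-g\|=\varepsilon$'', and the deduction of the us/uc equivalence are all sound, and you correctly single out the $\Phi$-identity as the nontrivial step.

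One small inaccuracy in the bookkeeping: you justify discarding the endpoint $\varepsilon=2$ ``by continuity'', but $d_{\|\cdot\|_*}$ need not be continuous from the left at $2$ (it is continuous on $[0,2)$, and the jump at $2$ can be positive). The correct justification is monotonicity: since $d_{\|\cdot\|_*}$ is nondecreasing, for $\varepsilon<2$ one has $\varepsilon\tau/2-d_{\|\cdot\|_*}(\varepsilon)\ge\varepsilon\tau/2-d_{\|\cdot\|_*}(2)$, so letting $\varepsilon\uparrow2$ gives $\sup_{0<\varepsilon<2}\bigl(\varepsilon\tau/2-d_{\|\cdot\|_*}(\varepsilon)\bigr)\ge\tau-d_{\|\cdot\|_*}(2)$, which is exactly the $\varepsilon=2$ term. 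Similarly the $\varepsilon=0$ term is handled, as you note, since it contributes $0\le r_{\|\cdot\|}(\tau)$ and indeed $\varepsilon\tau/2-d_{\|\cdot\|_*}(\varepsilon)\to0$ as $\varepsilon\to0^+$ because $d_{\|\cdot\|_*}(\varepsilon)\le\varepsilon/2$. With ``continuity'' replaced by ``monotonicity'' the argument is complete.
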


\begin{lem}\label{lem:dual}$($\cite{BL}$)$
Let $B$ be a us Banach space. Then for any $\xi\in S(B)$, 
there exists a unique 
element $\xi^* \in S(B^*)$ such that $\langle \xi, \xi^* \rangle  = 1$. 
Moreover, the map $S(B) \to S(B^*);$ $\xi \mapsto \xi^*$ is uniformly continuous. 
We call this map $\xi \mapsto \xi^*$ the $\mathrm{duality\ mapping}$. 
\end{lem}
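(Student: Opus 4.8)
The plan is to derive existence from the Hahn--Banach theorem and to extract both uniqueness and uniform continuity from the uniform convexity of $B^{*}$, which is available by Lemma~\ref{lem:lindenst} since $B$ is us. For existence, given $\xi\in S(B)$ the supporting-functional form of the Hahn--Banach theorem produces $\xi^{*}\in B^{*}$ with $\|\xi^{*}\|=1$ and $\langle\xi,\xi^{*}\rangle=\|\xi\|=1$; this step uses nothing beyond $\|\xi\|=1$.

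For uniqueness, suppose $\xi^{*},\eta^{*}\in S(B^{*})$ both satisfy $\langle\xi,\xi^{*}\rangle=\langle\xi,\eta^{*}\rangle=1$. Then $\langle\xi,(\xi^{*}+\eta^{*})/2\rangle=1$, so $\|(\xi^{*}+\eta^{*})/2\|\geq 1$, and since $\|\xi^{*}\|,\|\eta^{*}\|\leq 1$ this forces $1-\|(\xi^{*}+\eta^{*})/2\|=0$; were $\xi^{*}\neq\eta^{*}$, choosing $\varepsilon\in(0,2)$ with $\|\xi^{*}-\eta^{*}\|\geq\varepsilon$ would yield $1-\|(\xi^{*}+\eta^{*})/2\|\geq d_{\|\cdot\|_{*}}(\varepsilon)>0$, a contradiction. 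Hence $\xi\mapsto\xi^{*}$ is a well-defined map $S(B)\to S(B^{*})$. For uniform continuity, take $\xi,\eta\in S(B)$ and estimate $\langle\xi,\eta^{*}\rangle=\langle\eta,\eta^{*}\rangle+\langle\xi-\eta,\eta^{*}\rangle\geq 1-\|\xi-\eta\|$, so $\langle\xi,(\xi^{*}+\eta^{*})/2\rangle\geq 1-\|\xi-\eta\|/2$ and thus $\|(\xi^{*}+\eta^{*})/2\|\geq 1-\|\xi-\eta\|/2$. If $\|\xi^{*}-\eta^{*}\|\geq\varepsilon$ for some $\varepsilon\in(0,2)$, then $\|(\xi^{*}+\eta^{*})/2\|\leq 1-d_{\|\cdot\|_{*}}(\varepsilon)$ by the modulus of convexity of $B^{*}$, whence $\|\xi-\eta\|\geq 2\,d_{\|\cdot\|_{*}}(\varepsilon)$; the contrapositive says that, given $\varepsilon\in(0,2)$, the choice $\delta=2\,d_{\|\cdot\|_{*}}(\varepsilon)>0$ forces $\|\xi^{*}-\eta^{*}\|<\varepsilon$ whenever $\|\xi-\eta\|<\delta$.

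I do not anticipate a substantial obstacle: the sole non-elementary ingredient is that $d_{\|\cdot\|_{*}}(\varepsilon)>0$ for every $\varepsilon\in(0,2)$, i.e.\ the uniform convexity of $B^{*}$, which is precisely Lemma~\ref{lem:lindenst}. The only point needing slight care is to apply the modulus of convexity in its contrapositive direction in the last step and to note that the resulting $\delta$ depends only on $\varepsilon$, not on the pair $(\xi,\eta)$, so that genuine uniform continuity (rather than mere continuity) is obtained.
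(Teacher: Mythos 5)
Your proof is correct, and since the paper gives no proof of its own (it simply cites Benyamini--Lindenstrauss), there is nothing in the paper to compare against; what you wrote is the standard argument, using Lemma~\ref{lem:lindenst} to pass to uniform convexity of $B^{*}$ and then reading the modulus of convexity in its contrapositive form. One cosmetic point worth noting: the duality $\langle\cdot,\cdot\rangle$ in this paper is complex-valued, so the inequality $\langle\xi,\eta^{*}\rangle\geq 1-\|\xi-\eta\|$ should really be an estimate on absolute values, namely $\bigl|\langle\xi,(\xi^{*}+\eta^{*})/2\rangle\bigr|\geq 1-\|\xi-\eta\|/2$, which still lower-bounds $\|(\xi^{*}+\eta^{*})/2\|$; the uniqueness step similarly uses $\|(\xi^{*}+\eta^{*})/2\|\geq|\langle\xi,(\xi^{*}+\eta^{*})/2\rangle|=1$. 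This is a phrasing issue only and does not affect the argument.
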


Any Hilbert space $\mathcal{H}$ is ucus because $
d_{\| \cdot \|_{\mathcal{H}}} (\varepsilon ) = 1 - 
\sqrt{1- ( \varepsilon  / 2 )^2}
$
 and
$
r_{\| \cdot \|_{\mathcal{H}}} (\tau ) = \sqrt{1 + \tau^2  } -1
$. Any $L^p$ space is ucus if $1<p<\infty$, whereas $L^1$ spaces 
and $L^{\infty}$ spaces are not. (Here we assume dimensions $\geq 2$.) 

\begin{rem}\label{rem:superref}
A representation $\rho$ of $\Gamma$ on $B$ is said to be  
\textit{uniformly bounded} if $ | \rho | \colon= \sup_{g \in \Gamma} \| \rho (g ) \|_{\mathbb{B}(B)} <+ \infty$. 
Bader--Furman--Gelander--Monod \cite[Proposition 2.3]{BFGM} have proved that 
any uniformly bounded representation $\rho$ on a superreflexive Banach space 
$B$ is isometric with respect to some ucus compatible norm. 
\end{rem}

For an isometric representation $\rho$ of $\Gamma$  on $B$, we define the \textit{contragredient representation} $\rho^{\dagger}$ of $\Gamma$ on $B^*$ as follows: 
for any $g \in \Gamma$, $\phi \in B^*$ and $\xi \in B$,  $\langle \xi, \rho^{\dagger}(g)\phi \rangle = \langle \rho (g^{-1})\xi, \phi \rangle $. 
If $B$ is us, then the equality 
$\left(\rho (g)\xi \right)^* =\rho^{\dagger} (g) \xi^*$ holds by definition. 

\begin{prop}$\mathrm{(}$\cite[Proposition 2.6]{BFGM}$\mathrm{)}$\label{prop:decomp}
Suppose $G \triangleright N$ and $\rho$ is an isometric representation of 
$G$ on a us space $B$. Let $B_0$ be $B^{\rho (N)}$ and let $B_1=B_{\rho (N)}'$ denote the annihilator 
of $(B^{*})^{\rho^{\dagger}(N)}$ in $B$. Then $B= B_0 \oplus B_1$ is a 
decomposition of $B$ into two $\rho (G)$-invariant subspaces. Furthermore, for 
any $\xi=\xi_0 + \xi_1$ $(\xi_0 \in B_0 ,\xi_1 \in B_1)$, the inequality $\| \xi_0 \| \leq \|\xi \|$ holds.
\end{prop}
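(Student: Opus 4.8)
The plan is to obtain the projection of $B$ onto $B_0$ as the adjoint of a canonical norm-one projection living on the dual space, exploiting that uniform smoothness passes to uniform convexity of $B^{*}$. First note that since $B$ is us, Lemma~\ref{lem:lindenst} gives that $B^{*}$ is uc, hence reflexive (\cite{BL}), hence $B$ is reflexive. Write $M := (B^{*})^{\rho^{\dagger}(N)}$, so that the summand $B_{1}$ is exactly the pre-annihilator ${}^{\perp}M$ of $M$ in $B=B^{**}$, and set $W := \mathrm{span}\{\phi - \rho^{\dagger}(n)\phi : \phi \in B^{*},\ n \in N\}\subseteq B^{*}$, which is a $\rho^{\dagger}(N)$-invariant subspace.

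Next I would construct a map $Q\colon B^{*}\to B^{*}$. For $\phi\in B^{*}$ let $C_{\phi} := \overline{\mathrm{conv}}\bigl(\rho^{\dagger}(N)\phi\bigr)$, a nonempty closed convex set contained in the ball of radius $\|\phi\|$ (as $\rho^{\dagger}$ is isometric). In a uc space every such set has a unique element of minimal norm; call it $Q\phi$. Each isometry $\rho^{\dagger}(m)$, $m\in N$, preserves $C_{\phi}$ setwise, so uniqueness forces $\rho^{\dagger}(m)(Q\phi)=Q\phi$; thus $Q\phi\in M$, and $Q\phi=\phi$ whenever $\phi\in M$. Moreover $\rho^{\dagger}(N)\phi\subseteq \phi+W$ yields $C_{\phi}\subseteq \phi+\overline{W}$, so $\phi-Q\phi\in\overline{W}$ for every $\phi$. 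I would then check $W$ lies in the zero set of $Q$: for fixed $n\in N$ the Ces\`aro averages $\frac{1}{k}\sum_{j=0}^{k-1}\rho^{\dagger}(n^{j})\bigl(\phi-\rho^{\dagger}(n)\phi\bigr)=\frac{1}{k}\bigl(\phi-\rho^{\dagger}(n^{k})\phi\bigr)$ tend to $0$, so $0\in C_{\phi-\rho^{\dagger}(n)\phi}$ and $Q(\phi-\rho^{\dagger}(n)\phi)=0$; and $\{\phi:Q\phi=0\}=\{\phi:0\in C_{\phi}\}$ is closed because $\phi\mapsto C_{\phi}$ is $1$-Lipschitz for the Hausdorff metric. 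Combining these, $B^{*}=M\oplus\overline{W}$ (surjectivity of the sum via $\phi=Q\phi+(\phi-Q\phi)$; triviality of the intersection since $\phi\in M\cap\overline{W}$ gives $\phi=Q\phi=0$), and consequently $Q$ is precisely the linear projection of $B^{*}$ onto $M$ along $\overline{W}$, with $\|Q\|\le 1$ because $\phi\in C_{\phi}$ forces $\|Q\phi\|\le\|\phi\|$.

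Then I would pass to the adjoint. Since $B$ is reflexive, $Q^{*}$ is a norm-one linear projection of $B^{**}=B$ onto itself. The standard idempotent/annihilator identities give $\ker Q^{*}=(\mathrm{ran}\,Q)^{\perp}={}^{\perp}M=B_{1}$ and $\mathrm{ran}\,Q^{*}=\ker\bigl((I-Q)^{*}\bigr)=\bigl(\mathrm{ran}(I-Q)\bigr)^{\perp}=(\ker Q)^{\perp}=\overline{W}^{\perp}$. A direct computation identifies $W^{\perp}$ in $B$: $\xi\in W^{\perp}$ iff $\langle\xi,\phi\rangle=\langle\xi,\rho^{\dagger}(n)\phi\rangle=\langle\rho(n^{-1})\xi,\phi\rangle$ for all $\phi\in B^{*}$ and $n\in N$, i.e.\ iff $\rho(n)\xi=\xi$ for all $n$, i.e.\ iff $\xi\in B^{\rho(N)}=B_{0}$; hence $\mathrm{ran}\,Q^{*}=\overline{W}^{\perp}=W^{\perp}=B_{0}$. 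Therefore $B=\mathrm{ran}\,Q^{*}\oplus\ker Q^{*}=B_{0}\oplus B_{1}$, the projection onto $B_{0}$ along $B_{1}$ being $Q^{*}$, so that $\xi_{0}=Q^{*}\xi$ and $\|\xi_{0}\|\le\|Q^{*}\|\,\|\xi\|=\|Q\|\,\|\xi\|\le\|\xi\|$. The $\rho(G)$-invariance of the two summands then follows from normality of $N$ in $G$: the identity $\rho(n)\rho(g)=\rho(g)\rho(g^{-1}ng)$ shows $B_{0}=B^{\rho(N)}$ is $\rho(G)$-invariant, and likewise $M=(B^{*})^{\rho^{\dagger}(N)}$ is $\rho^{\dagger}(G)$-invariant, whence its pre-annihilator $B_{1}$ is $\rho(G)$-invariant.

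I expect the main obstacle to be the linearity of the minimal-norm map $Q$ — equivalently the topological splitting $B^{*}=M\oplus\overline{W}$ — since the definition of $Q$ via minimal norm is not manifestly additive; the resolution is exactly to pin down $\ker Q=\overline{W}$ (through the Ces\`aro argument together with the $1$-Lipschitz dependence of $C_{\phi}$ on $\phi$) and $\phi-Q\phi\in\overline{W}$ for all $\phi$. A secondary point needing care is the bookkeeping of annihilators and pre-annihilators when moving between $B$ and $B^{*}$, which is where reflexivity of $B$ (furnished by uniform smoothness) is indispensable. As a sanity check, the triviality $B_{0}\cap B_{1}=0$ can also be seen directly from Lemma~\ref{lem:dual}: a nonzero $\xi\in B_{0}$ has a unique duality functional $\xi^{*}$, and $(\rho(n)\xi)^{*}=\rho^{\dagger}(n)\xi^{*}$ forces $\xi^{*}\in M$, so $\langle\xi,\xi^{*}\rangle=\|\xi\|\neq 0$ contradicts $\xi\in{}^{\perp}M$.
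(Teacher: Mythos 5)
The paper does not supply its own proof of this proposition; it cites \cite[Proposition~2.6]{BFGM} and relies on it as a black box, so there is no paper-internal argument to compare against. Judged on its own terms, your proposal has the right shape (pass to the dual $B^{*}$, which is uc by Lemma~\ref{lem:lindenst}, construct a canonical $N$-equivariant selector $Q$, and adjoint back), but there is a genuine gap at precisely the point you yourself flag as ``the main obstacle.''

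The gap is in establishing $W\subseteq\ker Q$, equivalently $M\cap\overline{W}=0$. Your Ces\`aro computation shows $0\in C_{\phi-\rho^{\dagger}(n)\phi}$ for each single generator, and the Lipschitz/closedness observation upgrades this to: the norm closure of the \emph{set} $W_{0}=\{\phi-\rho^{\dagger}(n)\phi\}$ lies in $\ker Q=\{\phi : 0\in C_{\phi}\}$. But $W$ is the linear \emph{span} of $W_{0}$, and $\{\phi:0\in C_{\phi}\}$ is not obviously closed under addition when $Q$ is not yet known to be linear: $C_{\phi+\psi}=\overline{\mathrm{conv}}\{\rho^{\dagger}(n)\phi+\rho^{\dagger}(n)\psi\}$ couples the two group elements, and is in general a proper subset of $\overline{C_{\phi}+C_{\psi}}$, so $0\in C_{\phi}$ and $0\in C_{\psi}$ do not yield $0\in C_{\phi+\psi}$. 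Thus the step ``generators in $\ker Q$, $\ker Q$ closed, hence $\overline{W}\subseteq\ker Q$'' does not go through, and consequently neither does ``$\phi\in M\cap\overline{W}\Rightarrow\phi=Q\phi=0$.'' Note also that your sanity check at the end establishes $B_{0}\cap B_{1}=0$; by reflexivity this is dual to \emph{density of} $M+\overline{W}$, not to $M\cap\overline{W}=0$, which is what controls the density of $B_{0}+B_{1}$.

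The gap is fillable, but requires an input you did not invoke. For a nonzero $\phi\in M$ with $\|\phi\|=1$, the face $F_{\phi}=\{\xi\in B:\|\xi\|\leq 1,\ \langle\xi,\phi\rangle=1\}$ is nonempty (reflexivity), weakly compact, convex and $\rho(N)$-invariant; the Ryll--Nardzewski fixed-point theorem then yields an $N$-fixed $\xi\in F_{\phi}\cap B_{0}$ with $\langle\xi,\phi\rangle=1\neq 0$, so $\phi\notin B_{0}^{\perp}=\overline{W}$. (If one is willing to assume $B$ is also uc, i.e.\ ucus as in the original \cite{BFGM}, then $B$ is strictly convex and $F_{\phi}$ is a singleton, so Ryll--Nardzewski is not needed; for $N$ abelian, as in all applications in this paper, a multivariable Ces\`aro/F\o lner average does directly give $0\in C_{\psi}$ for every $\psi\in W$.) Once $B=B_{0}\oplus B_{1}$ is in hand, the contraction estimate is also available more directly than via $Q^{*}$: for $\xi=\xi_{0}+\xi_{1}$ with $\xi_{0}\neq 0$, the duality functional $\xi_{0}^{*}$ lies in $M$ (your sanity-check computation), so $\xi_{0}^{*}$ annihilates $\xi_{1}\in B_{1}={}^{\perp}M$ and $\|\xi_{0}\|=\langle\xi,\xi_{0}^{*}\rangle\leq\|\xi\|$.
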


\begin{prop}$($\cite[Proposition 2.10]{BFGM}$)$\label{prop:super}
Let $G \triangleright N$ be a group pair. Suppose a Banach space $B$ is superreflexive. Then for any isometric representation $\rho$ of $G$, $B_1=B'_{\rho (N)}$ is isomorphic to $B / B^{\rho (N)}$ as $G$-representations. Particularly, $G \triangleright N$ has relative $(\mathrm{T}_B)$ if and only if no isometric representation $\rho$ admits almost invariant vectors in $B_{\rho (N)}'$. 
\end{prop}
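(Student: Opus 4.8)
The plan is to reduce to the case where $B$ is uniformly smooth, apply the decomposition of Proposition~\ref{prop:decomp}, and then observe that the complement $B_1 = B'_{\rho(N)}$ is canonically $G$-isomorphic to the quotient $B/B^{\rho(N)}$ because that decomposition is topological.

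First I would renorm. Since an isometric representation is in particular uniformly bounded and $B$ is superreflexive, Remark~\ref{rem:superref} supplies a compatible ucus norm on $B$ for which $\rho$ is again isometric, so without loss of generality $B$ is ucus and in particular us. This renorming changes nothing relevant to the statement: the subspaces $B_0 = B^{\rho(N)}$ and $B_1$, the annihilator in $B$ of $(B^*)^{\rho^{\dagger}(N)}$, are defined purely from the underlying linear $G$-action on $B$ and $B^*$ and are automatically norm-closed (being intersections of kernels of continuous functionals); the induced quotient representation $\rho'$ on $B/B_0$ is unchanged; and the property ``admits almost invariant vectors'' is invariant under passing to an equivalent norm, since one may rescale an almost invariant unit vector while losing only a bounded factor in its displacement.

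Now Proposition~\ref{prop:decomp} gives $B = B_0 \oplus B_1$ with both summands $\rho(G)$-invariant, and moreover the projection $p_0 \colon B \to B_0$ along $B_1$ has norm at most $1$ by the inequality $\|\xi_0\| \le \|\xi\|$ recorded there. Hence $p_1 = \mathrm{id}_B - p_0 \colon B \to B_1$ is a bounded, $G$-equivariant map with kernel $B_0$, so it descends to a bounded $G$-equivariant map $\bar p_1 \colon B/B_0 \to B_1$. In the other direction, the composite of the inclusion $B_1 \hookrightarrow B$ with the quotient map $q \colon B \to B/B_0$ is a bounded $G$-equivariant map $B_1 \to B/B_0$, and a direct check shows it is a two-sided inverse of $\bar p_1$. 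Therefore $B_1 \cong B/B_0 = B/B^{\rho(N)}$ as $G$-representations, which is the first assertion.

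Finally, by Definition~\ref{def:propertyTB} the pair $G \triangleright N$ has relative $(\mathrm{T}_B)$ exactly when, for every isometric $\rho$, the induced representation $\rho'$ on $B/B^{\rho(N)}$ admits no almost invariant vectors; transporting almost invariant vectors back and forth along the bi-Lipschitz $G$-equivariant isomorphism $B/B^{\rho(N)} \cong B'_{\rho(N)}$ just constructed (again only a bounded factor is lost) shows this is equivalent to $\rho$ admitting no almost invariant vectors inside $B'_{\rho(N)}$. There is no serious obstacle here; the only points needing care are checking that the ucus renorming used to invoke Proposition~\ref{prop:decomp} affects none of the objects in the statement, and verifying that the two canonical maps between $B_1$ and $B/B_0$ are bounded and mutually inverse, for which the norm estimate $\|\xi_0\| \le \|\xi\|$ in Proposition~\ref{prop:decomp} is precisely what is needed.
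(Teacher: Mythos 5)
Your proof is correct. The paper does not prove this statement itself but cites it directly from \cite[Proposition~2.10]{BFGM}; your reconstruction --- renorm to a compatible ucus norm via Remark~\ref{rem:superref}, observe that $B_0$, $B_1$ and the quotient $G$-module are unchanged, invoke Proposition~\ref{prop:decomp} for the topological $G$-invariant splitting $B = B_0 \oplus B_1$ with the contractivity of $p_0$, and then read off the $G$-equivariant bi-Lipschitz isomorphism $B_1 \cong B/B_0$ and transfer almost-invariant vectors --- is the standard argument underlying the BFGM proposition, and all the small checks you flag (the annihilator $B_1$ being norm-independent, the inverse pair $\bar p_1$ and $q|_{B_1}$, the stability of admitting almost invariant vectors under equivalent norms) go through.
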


\subsection{\textbf{Property }$(\mathrm{F}_B)$}

An \textit{affine isometric action} $\alpha$ of $\Gamma$ on $B$ is an action of the form 
$\alpha (g ) \xi = \rho(g) \xi + c(g)$. Here $\rho$ is an 
isometric representation  
and $c(g) \in B$. 
We sometimes simply write $\alpha = \rho +c$ . We call $\rho$ and $c$ 
respectively the \textit{linear part} and the 
\textit{transition part} of $\alpha$. 
Because $\alpha$ is an action, the transition part $c$ 
 satisfies the following condition, called the \textit{cocycle identity}:
\begin{equation*}
\mathrm{For\ any\ }g ,h \in \Gamma , \ \ c(gh)=
 c(g) + \rho (g) c(h) .
\end{equation*}

We also call $c$ the \textit{cocycle part} of $\alpha$. 
\begin{defn}\label{def:1-cohom}
For an isometric representation $\rho$ on $B$, we call a map 
$c \colon \Gamma \to B$ a $\rho$\textit{-cocycle} if it  satisfies the 
cocycle identity. We call $c$ a $\rho$\textit{-coboundary} 
if there exists $\xi \in B$ such that $c(g)=\xi- \rho (g)\xi$ for all $g \in \Gamma$. 
We let $Z^1(\Gamma ; B, \rho)$ and $B^1(\Gamma ;B, \rho)$ denote respectively 
the spaces of all $\rho$-cocycles and of all $\rho$-coboundaries. We define the 
\textit{first cohomology} of $\Gamma$ with $\rho$-coefficient as the additive group 
$
H^1 (\Gamma ;B, \rho) = Z^1(\Gamma ;B, \rho) / B^1(\Gamma ;B, \rho)
$. 
\end{defn}
The space $Z^1(\Gamma ;B, \rho)$ is a Fr\'{e}chet space with respect to its natural topology. Namely, the uniform convergence topology on compact subsets of $\Gamma$. However the coboundary $B^1(\Gamma ;B, \rho)$ is \textit{not} closed in general. We shall examine details in Section \ref{sec:FSLnZ}.

\begin{defn}$($\cite{BFGM}, for the second case$)$ \label{def:property(FH)}
Let $B$ be a Banach space. 
\begin{itemize}
  \item A pair  $G>N$ of groups is 
  said to have \textit{relative property} $\mathit{(F_B)}$ if any 
  affine isometric action of $G$ on $B$ has an 
  $N$-fixed point.
  \item  A group $\Gamma$ is said to have \textit{property} 
  $\mathit{(F_B)}$ if $\Gamma > \Gamma $ has 
  relative $(\mathrm{F}_B)$. Equivalently, if 
  for any isometric representation $\rho$ of $\Gamma$ on $B$, 
  $H^1(\Gamma ;B, \rho) =0$ holds.
\end{itemize}
\end{defn}

\subsection{\textbf{Useful lemmas}}
Let $B$ be a superreflexive Banach space, $G \triangleright N $, and 
$F \subset G$ be a compact subset. 
We define the \textit{relative Kazhdan constant for property }$(T_B)$ for 
$(G,N;F, \rho)$ by the following equality: $\mathcal{K}(G,N;F,\rho)= \inf_{\xi \in S(B_1) }
      \sup_{s \in F}   \| \rho (s) \xi -\xi  \|  $. 
Here $B_1 =B_{\rho (N)}'$ as in Proposition \ref{prop:decomp}. 
If $G \triangleright N $ have relative $(\mathrm{T}_B)$ and $F$ generates $G$, 
then for any 
isometric representation $\rho$ on $B$, the constant $\mathcal{K}(G,N;F,\rho)$ 
is strictly positive. 

\begin{lem}\label{lem:kazhconst}
Suppose $B$ is us, $G$ is a compactly generated group and 
$F$ is a compact generating set of $G$. 
Let  $\rho$  be any isometric representation of $G$ on 
$B$,  $\xi$ be any vector in $B$ and $\delta_{\xi} := 
\sup_{s \in F} \| \rho (s) \xi -\xi \|$. 
If a pair $G \triangleright N$ has relative 
$(\mathrm{T}_B)$, then there exists a 
$\rho (N)$-invariant vector $\xi_0 \in B$ with 
$
\| \xi - \xi_0 \| \leq 2 \mathcal{K}^{-1} \delta_{\xi} 
$. 
Here $\mathcal{K}$ stands for the relative Kazhdan constant $\mathcal{K}(G,N;F,\rho) $ for $(\mathrm{T}_B)$.
\end{lem}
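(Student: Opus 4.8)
The plan is to reduce everything to the canonical decomposition of Proposition~\ref{prop:decomp} and then read off the quantitative bound from the definition of the relative Kazhdan constant $\mathcal{K}=\mathcal{K}(G,N;F,\rho)$.

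First, since $B$ is us, Proposition~\ref{prop:decomp} applies and yields a $\rho(G)$-invariant decomposition $B=B_0\oplus B_1$ with $B_0=B^{\rho(N)}$ and $B_1=B'_{\rho(N)}$; write $\xi=\xi_0+\xi_1$ accordingly and set $\xi_0$ to be our candidate $\rho(N)$-invariant vector, so that $\|\xi-\xi_0\|=\|\xi_1\|$. The same proposition gives $\|\xi_0\|\leq\|\xi\|$, i.e.\ the projection $P_0\colon B\to B_0$ along $B_1$ has norm $\leq 1$, hence $P_1=\mathrm{id}-P_0$ has norm $\leq 2$. Because both $B_0$ and $B_1$ are $\rho(G)$-invariant, for every $s\in F$ we have $(\rho(s)\xi)_i=\rho(s)\xi_i$ for $i=0,1$, so $\rho(s)\xi_1-\xi_1=P_1\bigl(\rho(s)\xi-\xi\bigr)$; therefore $\|\rho(s)\xi_1-\xi_1\|\leq 2\,\|\rho(s)\xi-\xi\|\leq 2\delta_\xi$, and taking the supremum over $s\in F$ gives $\sup_{s\in F}\|\rho(s)\xi_1-\xi_1\|\leq 2\delta_\xi$.

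Second, if $\xi_1=0$ then $\xi_0=\xi\in B^{\rho(N)}$ and the inequality $\|\xi-\xi_0\|=0\leq 2\mathcal{K}^{-1}\delta_\xi$ is trivial. Otherwise $\xi_1/\|\xi_1\|\in S(B_1)$, so, since $F$ generates $G$ and $G\triangleright N$ has relative $(\mathrm{T}_B)$ (whence $\mathcal{K}>0$), the definition of $\mathcal{K}$ gives
\[
\mathcal{K}\ \leq\ \sup_{s\in F}\Bigl\|\rho(s)\tfrac{\xi_1}{\|\xi_1\|}-\tfrac{\xi_1}{\|\xi_1\|}\Bigr\|\ =\ \|\xi_1\|^{-1}\sup_{s\in F}\|\rho(s)\xi_1-\xi_1\|\ \leq\ 2\,\|\xi_1\|^{-1}\delta_\xi .
\]
Rearranging yields $\|\xi-\xi_0\|=\|\xi_1\|\leq 2\mathcal{K}^{-1}\delta_\xi$, which is the assertion.

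I do not expect a genuine obstacle here: the argument is a short bookkeeping exercise once Proposition~\ref{prop:decomp} is in hand. The only point meriting care is the norm estimate $\|P_1\|\leq 2$, which is forced by the inequality $\|\xi_0\|\leq\|\xi\|$ and is exactly what produces the constant $2$ in the statement, together with the observation that $\rho(G)$-invariance of the splitting lets one commute $P_1$ past $\rho(s)$.
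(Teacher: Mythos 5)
Your proof is correct and matches the paper's argument step for step: both decompose $\xi=\xi_0+\xi_1$ via Proposition~\ref{prop:decomp}, use $\|\xi_0\|\leq\|\xi\|$ to get the factor $2$ when bounding the $B_1$-component of $\rho(s)\xi-\xi$, and then apply the definition of $\mathcal{K}$ to $\xi_1/\|\xi_1\|\in S(B_1)$. Your phrasing via the projection bound $\|P_1\|\leq 2$ is the same triangle-inequality observation the paper writes as $\|\eta_1\|\leq\|\eta\|+\|\eta_0\|\leq 2\|\eta\|$, and the explicit handling of the degenerate case $\xi_1=0$ is a harmless addition.
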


\begin{proof}
Decompose $B$ as $B= B_0 \oplus B_1 =B^{\rho (N)} \oplus B'_{\rho (N)}$, and 
 $\xi$  as $\xi=\xi_0 +\xi_1 $ $(\xi_0 \in B_0 , \xi_1 \in B_1)$. Then 
 $  \rho (s) \xi -\xi = (\rho (s) \xi_0 -\xi_0 )+(\rho (s) \xi_1 -\xi_1 )$ is the 
decomposition of $\rho (s) \xi -\xi$.  For a general 
 decomposition $\eta=\eta_0 +\eta_1$, one has $\| \eta_1 \| \leq \| \eta \| + \| \eta_0 \| \leq 2 \| \eta \|$ by applying Proposition \ref{prop:decomp}. 
 Hence  the inequality 
$
2 \delta_{\xi} =2 \sup_{s \in F}\| \rho (s) \xi -\xi \| \geq \sup_{s \in F}\| \rho (s) \xi_1 -\xi_1 \| \geq \mathcal{K} \|\xi_1 \|
$ holds.
\end{proof}

The following lemma and its corollary are well-known, and also important.

\begin{lem}$($lemma of the Chebyshev center$)$ \label{lem:cheb}
Let $B$ be a uc Banach space and $X$ be a $\mathrm{bounded}$ subset. Then 
there exists a unique closed ball with the minimum radius which contains $X$. 
We define the $\mathrm{Chebyshev}$ $\mathrm{center}$ of $X$ as the center of this ball.
\end{lem}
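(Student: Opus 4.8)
The statement to prove is Lemma~\ref{lem:cheb}, the lemma of the Chebyshev center. Here is my plan.

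\medskip

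\textbf{Setup and reduction.} Let $B$ be uniformly convex and $X \subset B$ bounded and nonempty. For $x \in B$ define $f(x) = \sup_{y \in X} \|x - y\|$, the radius of the smallest ball centered at $x$ containing $X$; this is finite since $X$ is bounded, and it is a convex, $1$-Lipschitz function of $x$ (a supremum of translates of the norm). Set $r_0 = \inf_{x \in B} f(x)$. The plan is to show: (i) the infimum $r_0$ is attained, and (ii) it is attained at a unique point, which we then name the Chebyshev center. If $r_0 = 0$ then $X$ is a single point and the claim is trivial, so assume $r_0 > 0$.

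\medskip

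\textbf{Existence via a Cauchy argument using uniform convexity.} I would take a minimizing sequence $(x_n)$ with $f(x_n) \to r_0$ and show it is Cauchy. Fix $\varepsilon > 0$. For $m,n$ large, $f(x_m), f(x_n) \le r_0 + \delta$ for small $\delta$. Consider the midpoint $z = (x_m + x_n)/2$. For any $y \in X$, apply uniform convexity to the unit-ball-scaled vectors $\xi = (x_m - y)/(r_0+\delta)$ and $\eta = (x_n - y)/(r_0+\delta)$: if $\|x_m - x_n\| \ge t$, then $\|\xi - \eta\| \ge t/(r_0+\delta)$, so
\[
\left\| \frac{\xi + \eta}{2} \right\| \le 1 - d_{\|\cdot\|}\!\left( \frac{t}{r_0+\delta} \right),
\]
i.e. $\|z - y\| \le (r_0+\delta)\bigl(1 - d_{\|\cdot\|}(t/(r_0+\delta))\bigr)$. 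Taking the supremum over $y \in X$ gives $f(z) \le (r_0+\delta)(1 - d_{\|\cdot\|}(t/(r_0+\delta)))$. But $f(z) \ge r_0$ by definition of the infimum, so $r_0 \le (r_0+\delta)(1 - d_{\|\cdot\|}(t/(r_0+\delta)))$. For fixed $t > 0$ this fails once $\delta$ is small enough, because $d_{\|\cdot\|}(t/r_0) > 0$ by uniform convexity and the right-hand side is continuous in $\delta$. Hence for any $\varepsilon>0$, choosing $\delta$ small forces $\|x_m - x_n\| < \varepsilon$ for all large $m,n$; so $(x_n)$ is Cauchy, converges to some $c \in B$, and by continuity of $f$ we get $f(c) = r_0$. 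This gives the closed ball $\overline{B}(c, r_0) \supseteq X$ of minimal radius.

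\medskip

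\textbf{Uniqueness.} Suppose $c'$ also satisfies $f(c') = r_0$. Run the same midpoint estimate with $c, c'$ in place of $x_m, x_n$ and $\delta = 0$: if $\|c - c'\| \ge t > 0$, then $f((c+c')/2) \le r_0(1 - d_{\|\cdot\|}(t/r_0)) < r_0$, contradicting minimality of $r_0$. Hence $c = c'$, so both the minimal radius and its center are unique. The center $c$ is the Chebyshev center of $X$. The main point — really the only nontrivial one — is the uniform-convexity midpoint estimate; once it is in hand, existence and uniqueness both drop out of it, and no serious obstacle remains. I should also note that boundedness of $X$ is used only to ensure $f$ is finite-valued, and no completeness-type hypothesis on $B$ beyond its being a Banach space is needed since $(x_n)$ is genuinely Cauchy in $B$.
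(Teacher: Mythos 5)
The paper does not actually prove this lemma; it is stated with the remark ``The following lemma and its corollary are well-known, and also important,'' and no argument is given. So there is nothing in the paper to compare against, and your proposal has to be judged on its own.

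Your argument is correct and is the standard direct proof. Setting $f(x)=\sup_{y\in X}\|x-y\|$, observing that $f$ is convex and $1$-Lipschitz, running the midpoint estimate from uniform convexity on a minimizing sequence to get Cauchyness (hence existence, using completeness of $B$), and then reusing the same midpoint estimate with $\delta=0$ for uniqueness, is exactly the textbook route. One small inaccuracy: you justify passing to the limit by saying ``the right-hand side is continuous in $\delta$,'' but the modulus of convexity $d_{\|\cdot\|}$ need not be continuous. What you actually need, and what makes the step work, is monotonicity: for $0<\delta\le\delta_0$ one has $d_{\|\cdot\|}(t/(r_0+\delta))\ge d_{\|\cdot\|}(t/(r_0+\delta_0))=:c_0>0$, so the bound $r_0\le (r_0+\delta)(1-c_0)$ fails once $\delta<r_0 c_0/(1-c_0)$. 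With that wording fixed, the proof is complete. For context, the other common route is via weak compactness: a uc space is reflexive, $f$ is convex and norm-continuous hence weakly lower semicontinuous, so the infimum over a large closed ball is attained, with uniqueness again from uniform (indeed strict) convexity. Your Cauchy-sequence argument is more elementary in that it bypasses reflexivity and weak topologies, at the cost of carrying out the quantitative midpoint estimate twice; either is acceptable.
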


\begin{cor}\label{cor:cocy}
Let $B$ be a superreflexive Banach space and $N$ be a subgroup of $G$. Then for any affine isometric action of $G$ on $B$, the following are equivalent:
\begin{enumerate}[$\mathrm{(}$i$\mathrm{)}$]
  \item  The action has an $N$-fixed point.
  \item  Some $($or equivalently, any$)$ $N$-orbit is bounded.
\end{enumerate}
\end{cor}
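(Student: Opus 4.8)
The plan is to prove Corollary~\ref{cor:cocy} using Lemma~\ref{lem:cheb} on the Chebyshev center together with Proposition~\ref{prop:decomp} (or rather just the fact that $B$ superreflexive is isomorphic to a uc space). The implication (i)$\Rightarrow$(ii) is immediate: if $\xi_0$ is $N$-fixed, then for any $\eta$ the orbit $N\eta$ satisfies $\|\alpha(g)\eta - \xi_0\| = \|\alpha(g)\eta - \alpha(g)\xi_0\| = \|\rho(g)(\eta-\xi_0)\| = \|\eta - \xi_0\|$ for all $g\in N$, so $N\eta$ is contained in a sphere of radius $\|\eta-\xi_0\|$ about $\xi_0$; in particular it is bounded. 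The same computation shows that if one orbit is bounded then every orbit is bounded, since any two orbits are at bounded Hausdorff distance (indeed $\|\alpha(g)\eta - \alpha(g)\eta'\| = \|\eta-\eta'\|$ is constant in $g$).

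For the substantive direction (ii)$\Rightarrow$(i), I would first replace the given norm on $B$ by an equivalent uniformly convex norm, which is possible since $B$ is superreflexive (Definition~\ref{def:modulus}); note that after renorming the action $\alpha$ is no longer isometric but it remains an affine action whose linear part $\rho$ is uniformly bounded, and crucially it still permutes bounded sets and moves the orbit $N\eta$ around. Actually, the cleanest route is: since some $N$-orbit $X = N\eta$ is bounded, apply Lemma~\ref{lem:cheb} in the uc renorming to obtain the Chebyshev center $\xi_0$ of $X$ — the unique center of the smallest closed ball containing $X$. Then I would argue that $\xi_0$ is $N$-fixed. The key point is that for each $g\in N$ the affine map $\alpha(g)$ sends $X = N\eta$ bijectively onto itself (since $g N\eta = N\eta$), and $\alpha(g)$ is an affine \emph{isometry} for the original norm — but the Chebyshev center was defined using the uc norm, so I need to be a little careful.

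To fix this cleanly I would instead do the renorming by \emph{averaging over $\rho(N)$} only when $N$ is the acting group and the orbit-closure argument: the honest statement only needs that \emph{the closed convex hull of the bounded orbit $X$ is $\alpha(N)$-invariant, bounded, and uc-complete}, and that an affine isometry (for the original norm) of $B$ restricting to a bijection of a bounded convex set $C$ must fix the Chebyshev center of $C$ \emph{computed in the original norm}. Since $B$ is superreflexive hence reflexive, and superreflexive implies it admits \emph{a} uc renorming, the Chebyshev-center lemma applies to bounded sets in that renorming; but uniqueness of the Chebyshev center in a uc space, applied to the bounded convex hull $\overline{\mathrm{conv}}(X)$ in the uc norm, gives a canonical point $\xi_0$, and I then show $\alpha(g)\xi_0 = \xi_0$ for all $g\in N$ because $\alpha(g)$ maps $\overline{\mathrm{conv}}(X)$ onto itself affinely and isometrically for the uc norm — this last clause is the subtle point, since $\alpha(g)$ is only an isometry for the original norm. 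The resolution: pass to the $\rho(N)$-invariant equivalent norm obtained by a vector-valued averaging (the orbit $\rho(N)\xi$ is bounded on the relevant subspace because, after translating $X$ to contain $0$, the linear parts $\rho(g)$ applied to $X-\eta$ stay bounded), so that one may assume $\alpha|_N$ is isometric for a uc norm; then $\alpha(g)$ for $g\in N$ is a surjective isometry of $\overline{\mathrm{conv}}(X)$ in that norm, hence by uniqueness fixes its Chebyshev center. The main obstacle is exactly this renorming/invariance bookkeeping — making precise that one can choose an equivalent uniformly convex norm for which the restricted action $\alpha|_N$ is isometric, given only that $N$ has a bounded orbit — and then the conclusion (i) follows immediately, and it also yields the "some $\Leftrightarrow$ any" equivalence in (ii) as noted above.
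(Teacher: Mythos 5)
Your overall strategy is exactly the intended one: the easy direction and the ``some $\Leftrightarrow$ any'' clause follow from isometry of the orbit maps, and the substantive implication (ii)$\Rightarrow$(i) follows by taking the Chebyshev center of a bounded $N$-orbit in a uniformly convex norm and observing that, by the uniqueness clause in Lemma~\ref{lem:cheb}, any isometry permuting the orbit must fix the center.

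Where you go astray is in the renorming step, which you correctly identify as the crux but then handle in a needlessly confused way. You do not need to invent a ``vector-valued averaging'' over $\rho(N)$ — and indeed such an average is not well-defined when $N$ is an arbitrary (possibly nonamenable, noncompact) subgroup, so the parenthetical about translating $X$ to contain $0$ and controlling $\rho(g)(X-\eta)$ is both circular (the linear orbits $\rho(N)\xi$ are automatically bounded, since $\rho$ is isometric) and insufficient to produce an invariant \emph{uc} norm: a pointwise supremum of uniformly convex norms need not be uniformly convex. The result you actually need is already in the paper as Remark~\ref{rem:superref}, citing \cite[Proposition 2.3]{BFGM}: any uniformly bounded representation of a group on a superreflexive Banach space is isometric with respect to some compatible ucus norm. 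Since $\rho$ is isometric for the original norm, it is in particular uniformly bounded, so one may replace the norm by a compatible ucus one for which $\rho$ (and hence each affine isometry $\alpha(g)$, being $\rho(g)$ followed by a translation) is still isometric. With that renorming in hand your Chebyshev-center argument goes through verbatim, and the proof closes. This is precisely how the paper treats the same issue at the start of Section~\ref{sec:TtoF} (``choose and fix one norm on $B$ as in Remark~\ref{rem:superref}''). So: right theorem, right lemma, but the renorming should be a one-line citation rather than an ad hoc averaging construction.
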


\subsection{Elementary linear groups and unit elementary matrices}\label{subsec:semid}
Let $A$ be a unital ring and $n \geq 2$. 
Let $i, j$ be indices with $1\leq i \leq n$, $1\leq j \leq n$, and 
$i \ne j$. For $a\in A$, we let $E_{i , j} (a)$ denote the element in the matrix ring 
$M_n (A)$ whose all diagonal entries are $1$, $(i,j)$-th entry is $a$ 
and the other entries are $0$. In the setting above, we recall that an element in $M_n(A)$ is called an \textit{elementary matrix} if it is of the form $E_{i,j}(a)$ as in above, and that the \textit{elementary linear group} $EL_n(A)$ is defined as the multiplicative group in $M_n(A)$ generated by all elementary matrices. Note that it is also common to call $EL_n(A)$ the \textit{elementary group} over $A$ and use the symbol $\mathrm{E}_n(A)$.

Let $G= EL_{n}(A) \ltimes A^{n}$$\triangleright A^{n}=N$. Then we 
identify $G$ as 
$$G \cong \left\{ (R,v):=
 \left( 
\begin{array}{c|c}
R & v \\
\hline 
 0 &  1 
\end{array}
\right)
 : R \in EL_{n} (A), \, v \in A^{n}
 \right\}
\subset EL_{n+1} (A).$$ 
We also identify $N$ with the additive group of all column vector $v$. 
Here we abbreviate $(I,v) \in N \subset G$ by omitting $I(=I_{n})$.

In the case of $A=A_k=$$\mathbb{Z}[x_1 , \ldots ,x_k]$, we define 
the \textit{unit elementary matrices} as the matrices of the form 
$E_{i,j}(\pm x_l)$ $(0\leq l \leq k)$. Here we set $x_0=1$. 
We also consider the case of that $G= EL_{n}(A_k) \ltimes A_k^{n}$$\triangleright 
A_k^{n}=N$. 
In this case, we define 
the finite generating set $F$ as follows: with the above identification 
$G \subset EL_{n+1} (A_k)$, we let 
$F$ be the set of all unit elementary matrices in $G$. We also 
let $F_1$ denote $F \cap N$ and $F_2$ denote $F \setminus F_1$.

\section{\textbf{Proof of Theorem \ref{thm:TtoF}}}\label{sec:TtoF}
We keep the same notation and identifications as in Subsection 
\ref{subsec:semid} (with $n=3$). We let $N_1$ be the subgroup of 
$N(\subset SL_4(A_k))$ of all elements whose $(2,4)$-th and $(3,4)$-th 
entries are $0$. 
Take an arbitrary 
affine isometric action $\alpha$ on $B$, and choose and fix one norm on $B$ as in Remark \ref{rem:superref}. We decompose $\alpha$ into the linear 
part $\rho$ and the cocycle part $c$. We also decompose 
$B$ as $B=$$B_0 \oplus B_1$ ($B_0=B^{\rho (N)}$ and $B_1= B^{'}_{\rho (N)}$) and 
obtain the associated decomposition $c=c_0 +c_1$. From the $\rho (G)$-
invariance of $B_0$ and $B_1$, each $c_j$, $j \in \{ 0,1\}$ is a $\rho$-cocycle. 
For any elements $g=(R,0)\in G$ and 
$h=v \in N$,  $g h g^{-1} =$$(I ,Rv)=:$$Rv \in N$ holds. 
In particular, by noting that $\rho \mid_N =\mathrm{id}$ on $B_0$, we have the following 
equality: for any $R \in SL_3 (A_k)$ and $v \in N$, 
$c_0 (Rv) = \rho ((R,0)) c_0(v)$. 
Then one can check the following two facts from the equality above 
and the cocycle identity for $c_1$: 
\begin{itemize}
   \item   The set $c_0 (N)$ is bounded (and hence actually equal to $0$).
   \item  If $c_1 (N_1)$ is bounded, then $c_1(N)$ is bounded.
\end{itemize}
(The first part follows from the fact that any column vector in $N$ can be written as a sum of three columns such that for any column of the three, at least one entry is $1$.)

\begin{proof}(\textit{Theorem \ref{thm:TtoF}})
Thanks to the two facts above and Corollary \ref{cor:cocy}, for the proof 
it suffices to verify 
the boundedness of $c_1 (N_1)$. 
We define a finite subset $F_0$ and two subgroups $G_1$, $G_2$  
of $G$ by the following expressions respectively: 
$$
 \left\{  \left(
\begin{array}{cccc}
1 & * & * & * \\
0 & 1 & * & * \\
0 & * & 1 & * \\
0 & 0 & 0 & 1
\end{array}
\right) \right\},\ 
\left\{  \left(
\begin{array}{ccc}
1 & ^t v' & 0 \\
0 &  R'   & 0 \\
0 &  0    & 1 
\end{array}
\right) \right\} \mathrm{\ \ and\ \ } 
\left\{  \left(
\begin{array}{ccc}
1 &  0 & 0 \\
0 &  R' & v' \\
0 &  0  & 1 
\end{array}
\right) \right\}.
$$
Here in the first definition, the expression means that for each 
element in $F_0$, 
only one of the above $*$'s is 
$ \pm x_l $ $(0\leq l \leq k)$ and the others are 0. Also in the second and the 
third expressions, $ R' $ moves among all elements in $ EL_2(A_k)$ and $v'$ 
moves among all elements in $ A_k^2 $. We let 
$C = \sup_{s \in F_0} \| c_1(s) \|$. We set $L \ (\triangleleft G_1)$ as the 
group of all elements in $G_1$ with $R'=I$ and $N_2 \ (\triangleleft G_2)$ as the group 
of all elements in $G_2$ with $R'=I$ . 
A crucial point here is that $N_1$ 
commutes with $F_0$: therefore for any 
$h \in N_1$ and any $s \in F_0$, we have the following inequality: 
\begin{align*}
&\| \rho (s) c_1(h) - c_1(h) \| = \| c_1(sh) - c_1(h) -c_1(s) \| \leq \| c_1 (hs)  - c_1(h) \| + \| c_1(s) \| \\
=&  \| \rho (h) c_1(s) \| +\|c_1(s) \| = 2 \| c_1(s) \| \leq 2C .
\end{align*}

We set a number 
$\mathcal{K}$ as the minimum of the two relative Kazhdan constants $\mathcal{K}(G_1,L;F_0 \cap G_1,\rho \mid_{G_1}) $ and $ \mathcal{K}(G_2,N_2;F_0 \cap G_2 ,\rho \mid_{G_2})$. 
Then from relative $(\mathrm{T}_B)$ of $EL_2(A_k) \ltimes A_k ^2$
$\triangleright A_k ^2$, $\mathcal{K}$ is strictly positive. 
Hence from Lemma  \ref{lem:kazhconst}, for any $\xi \in c_1 (N_1)$ one can choose 
a $\rho (L)$-invariant vector $\eta$ and a $\rho (N_2)$-invariant vector $\zeta$ 
with 
\begin{align*}
&\| \xi- \eta\| \leq 4 \mathcal{K}^{-1}D, \\
\mathrm{and} \ \ \ &\| \xi- \zeta\| \leq 4 \mathcal{K}^{-1}D. 
\end{align*}
Finally, note that $N_1$ is obtained by single commutators between $L$ and $N_2$: 
for any $h \in N$, there exist $h_1 \in N_1$, $h_2 \in N_2$, $h' \in N_2$, and 
$l \in L$ such that $h=h_1 h_2$ and $h_1 =l h' l^{-1} {h'} ^{-1}$. 
Hence for any $\xi \in c_1 (N_1)$ and $h \in N$, the following inequality holds: 
$$
 \| \rho (h) \xi- \xi \| = \| \rho (lh' l^{-1} {h'} ^{-1}h_2) \xi-\xi \| 
\leq  4 \| \xi-\eta \| + 4 \| \xi-\zeta \|  \leq 32 \mathcal{K}^{-1}C.
$$
Note that the upper bound of the inequality above is independent of the 
choices of $\xi \in c_1 (N_1)$ and $h\in N$.

Now suppose that $c_1 (N_1)$ is not bounded. 
Then one can choose 
 $\xi \in c_1 (N_1)$ such that 
  $\| \rho (h)\xi - \xi \| < \|\xi \| $ holds for 
all $h \in N$. 
Then from 
Lemma \ref{lem:cheb}, 
there must exist a \textit{non-zero} $\rho (N)$-invariant 
vector in $B_1=$$B^{'}_{\rho (N)}$, 
but it is a contradiction. 
\end{proof}

\section{\textbf{Proof of Theorem \ref{thm:Tsemi}}}\label{sec:relT}
For the proof, we will concentrate on investigation for the case of 
relative $(\mathrm{T}_{[\mathcal{H}]})$. Indeed, the case of relative 
$(\mathrm{T}_{L^p})$ directly follows from the 
original relative property (T) proved by Shalom 
\cite{Shal1} and the relative version of $(ii)$ in Theorem \ref{thm:BFGM}. 
We keep the same notation and identifications as in Subsection 
\ref{subsec:semid} (with $n=2$).

For any $B \in [\mathcal{H}]$ and any isometric representation $\rho$ on $B$, 
one can regard $\rho$ as a uniformly bounded representation on a Hilbert 
space $\mathcal{H}$. The key to proving Theorem \ref{thm:Tsemi} is the 
following proposition by J. Dixmier \cite{Dix}, that states any uniformly bounded 
representation on a Hilbert space of an \textit{amenable} group is 
unitarizable.
\begin{prop}\label{prop:Dix}$($\cite{Dix}$)$
Let $\Lambda$ be a locally compact group. Suppose $\Lambda$ is $\mathrm{amenable}$. 
Then for any uniformly bounded representation $\rho$ on $\mathcal{H}$ of 
$\Lambda$, there exists an invertible operator $T\in \mathbb{B}(\mathcal{H})$ 
such that $\pi = \mathrm{Ad} (T) \circ \rho =T \circ \rho \circ T^{-1}$ is a unitary representation. Moreover, one can choose $T$ with 
$\| T\|_{\mathbb{B}(\mathcal{H})} \| T^{-1} \|_{\mathbb{B}(\mathcal{H})} \leq  |\rho |^2 $.
\end{prop}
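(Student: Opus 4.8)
The final statement is Proposition~\ref{prop:Dix}, Dixmier's unitarizability theorem for uniformly bounded representations of amenable groups on Hilbert spaces. Here is a proof plan.

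\medskip

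The plan is to build the unitarizing operator directly from an invariant mean. First I would fix a uniformly bounded representation $\rho$ of the amenable group $\Lambda$ on $\mathcal{H}$, with $|\rho| = \sup_{g}\|\rho(g)\|_{\mathbb{B}(\mathcal{H})} < \infty$, and a left-invariant mean $\mathfrak{m}$ on $L^\infty(\Lambda)$ (equivalently, on the space of bounded functions $\Lambda \to \mathbb{C}$). For each pair $\xi, \eta \in \mathcal{H}$, the function $g \mapsto \langle \rho(g)\xi, \rho(g)\eta \rangle$ is bounded by $|\rho|^2 \|\xi\|\,\|\eta\|$, hence lies in the domain of $\mathfrak{m}$. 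Define a new sesquilinear form
\begin{equation*}
[\xi,\eta] := \mathfrak{m}\bigl( g \mapsto \langle \rho(g)\xi, \rho(g)\eta \rangle \bigr).
\end{equation*}
I would check that $[\,\cdot\,,\cdot\,]$ is a bounded, positive, Hermitian form: positivity and the Hermitian property pass through the mean from the corresponding properties of the inner product, and boundedness is the estimate above. The lower bound comes from $\|\xi\|^2 = \|\rho(g^{-1})\rho(g)\xi\|^2 \le |\rho|^2 \|\rho(g)\xi\|^2$, so $|\rho|^{-2}\|\xi\|^2 \le [\xi,\xi] \le |\rho|^2 \|\xi\|^2$. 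Thus $[\,\cdot\,,\cdot\,]$ defines an equivalent Hilbert space norm on $\mathcal{H}$.

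\medskip

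Next, by left-invariance of $\mathfrak{m}$, for any $h \in \Lambda$ we have
\begin{equation*}
[\rho(h)\xi, \rho(h)\eta] = \mathfrak{m}\bigl( g \mapsto \langle \rho(gh)\xi, \rho(gh)\eta \rangle \bigr) = [\xi,\eta],
\end{equation*}
so $\rho$ acts by $[\,\cdot\,,\cdot\,]$-unitaries. Now I would invoke the standard fact that two equivalent Hilbert space inner products on the same vector space are intertwined by a positive invertible operator: there is a positive $S \in \mathbb{B}(\mathcal{H})$, invertible, with $[\xi,\eta] = \langle S\xi, \eta \rangle$ for all $\xi,\eta$ (this is just the Riesz/Lax–Milgram representation of the bounded form $[\,\cdot\,,\cdot\,]$ relative to the original inner product, and $S > 0$). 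Setting $T = S^{1/2}$, the operator $\pi(g) = T\rho(g)T^{-1}$ then satisfies $\langle \pi(g)\xi, \pi(g)\eta\rangle = \langle S^{1/2}\rho(g)S^{-1/2}\xi, S^{1/2}\rho(g)S^{-1/2}\eta\rangle = \langle S\rho(g)S^{-1/2}\xi, \rho(g)S^{-1/2}\eta\rangle = [\rho(g)S^{-1/2}\xi, \rho(g)S^{-1/2}\eta] = [S^{-1/2}\xi, S^{-1/2}\eta] = \langle \xi,\eta\rangle$, so $\pi = \mathrm{Ad}(T)\circ\rho$ is unitary.

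\medskip

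Finally, for the quantitative bound: the two-sided estimate $|\rho|^{-2}\|\xi\|^2 \le \langle S\xi,\xi\rangle \le |\rho|^2\|\xi\|^2$ says $|\rho|^{-2} I \le S \le |\rho|^2 I$ in the operator order, hence $\|S\|_{\mathbb{B}(\mathcal{H})} \le |\rho|^2$ and $\|S^{-1}\|_{\mathbb{B}(\mathcal{H})} \le |\rho|^2$. Taking square roots, $\|T\|_{\mathbb{B}(\mathcal{H})} = \|S^{1/2}\| \le |\rho|$ and $\|T^{-1}\|_{\mathbb{B}(\mathcal{H})} = \|S^{-1/2}\| \le |\rho|$, so $\|T\|\,\|T^{-1}\| \le |\rho|^2$ as claimed. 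The main point requiring care — the only genuine obstacle — is ensuring the averaging construction is legitimate: one must verify that $g \mapsto \langle\rho(g)\xi,\rho(g)\eta\rangle$ is measurable/bounded (immediate from strong continuity and uniform boundedness, our standing assumptions) so that the invariant mean applies, and that passing the positivity of the pointwise inner products through $\mathfrak{m}$ genuinely yields a positive form (means are positive linear functionals, so this is routine but should be stated). Everything else is bookkeeping with the operator-order inequalities.
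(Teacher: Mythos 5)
The paper does not prove this proposition; it cites it directly from Dixmier's 1950 paper \cite{Dix}, so there is no in-paper argument to compare against. Your proof is the standard (indeed original) averaging argument and it is correct, including the quantitative bound: the operator order estimate $|\rho|^{-2}I \le S \le |\rho|^2 I$ gives $\|S^{\pm 1/2}\| \le |\rho|$, hence $\|T\|\,\|T^{-1}\| \le |\rho|^2$, exactly as claimed.

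One small labeling slip worth fixing: you announce ``by left-invariance of $\mathfrak{m}$'' and then write $[\rho(h)\xi,\rho(h)\eta] = \mathfrak{m}\bigl(g \mapsto \langle\rho(gh)\xi,\rho(gh)\eta\rangle\bigr)$. The map $g \mapsto gh$ is right translation, so what you are actually using is \emph{right}-invariance of the mean. This costs nothing, since for locally compact groups the existence of a left-invariant mean, a right-invariant mean, and a bi-invariant mean on $C_b(\Lambda)$ are all equivalent to amenability (one can pass from a left-invariant mean to a right-invariant one by composing with $f \mapsto f(g^{-1})$, or average the two to get a bi-invariant mean). Just state that you take a right-invariant (or bi-invariant) mean, and the argument is clean. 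You are also right to note that $g \mapsto \langle\rho(g)\xi,\rho(g)\eta\rangle$ is bounded and continuous so that the mean is applicable; that is precisely the point that makes the construction legitimate for topological (as opposed to discrete) groups.
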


\begin{proof}(\textit{Theorem \ref{thm:Tsemi}}, Outlined)
For simplicity, we shall show the case of $k=0$. Namely, we will prove relative property 
$(\mathrm{T}_{[\mathcal{H}]}$) for 
$N=\mathbb{Z}^2 \triangleleft SL_2 (\mathbb{Z}) \ltimes \mathbb{Z}^2 =G$.

Suppose that there exist a ucus Banach space $B \in [\mathcal{H}]$ and an isometric representation $(\rho , B)$ of $G$ such that $\rho$ admits almost invariant vectors in 
$B_{\rho (N)}'$. We may assume that $B_{\rho (N)}'=B$ 
because $B_{\rho (N)}' $ is also an element in $
[\mathcal{H}]$. 
Thanks to the amenability of $N$ and Proposition \ref{prop:Dix}, 
we may also assume $(\rho , \mathcal{H})$ is a \textit{unitary} representation on 
$N$ . 
We choose any vector $\xi \in S(B)$ and fix it. We let 
$\delta_{\xi} = \sup_{s\in F}\| \rho (s) \xi -\xi\|_B$ and 
${\delta^*}_{\xi} = \sup_{s\in F}\| \rho^{\dagger} (s) \xi^* -\xi^*\|_{B^*}$. 
Here $\xi \mapsto \xi^*$ is the duality mapping defined in Lemma \ref{lem:dual}.

Then from chosen vector $\xi$ and the duality on $B$, we can construct a 
spectral measure $\mu=\mu_{\xi}$ on the Pontrjagin dual $\widehat{\mathbb{Z}^2} \cong $$\mathbb{T}^2 \cong$$\left[  -\frac{1}{2},\frac{1}{2}\right) ^2$. 
The method for constructing the measure is similar to one in the 
original relative (T) 
argument as in \cite{Shal1} or in its slightly different interpretation in a book of N. Brown and N. Ozawa 
\cite[Theorem 12.1.10]{BOz}. 
Unlike the original case of the proof ofrelative $(\mathrm{T})$, 
$\mu$ is complex-valued in general. 
However, one obtains the positive part $\mu_{+}$ by taking the Hahn--Jordan decomposition of $\mu$. 
One can also verify the following three facts by an argument similar to one in the original proof for relative (T) (we refer to Subsection~\ref{subsec:semid} for the definition of the finite subset $F_2$): 
\begin{itemize}
   \item  The inequality $\mu_+ (\mathbb{T}^2) \geq 1$ holds. 
   \item  For any Borel set $W$ being far from the origin $0$ of $\mathbb{T}^2$ 
   (in certain quantitative sense), 
   $\mu_+ (W)= O(\delta_{\xi} \cdot {\delta^*}_{\xi})$  as  
   $\delta_{\xi} ,\, {\delta^*}_{\xi} \to 0$.
   \item  For any Borel subset $Z \subset \mathbb{T}^2$ and 
   $g \in F_2 $, 
   $ |\mu_+ (\hat{g} Z)- \mu_+ (Z) |= O(\delta_{\xi} + {\delta^*}_{\xi})$ as 
    $\delta_{\xi} ,\, {\delta^*}_{\xi} \to 0$. Here for $g \in SL_2 (\mathbb{Z})$, 
    $\hat{g}=(\,^{t}g)^{-1}$ and $SL_2 (\mathbb{Z})$ naturally acts on $\mathbb{T}^2$.
\end{itemize}

Now let $\xi \in S(B)$ move among almost invariant vectors with 
$\delta_{\xi} \to 0$. Then from (uniform) continuity of the duality 
mapping, ${\delta^*}_{\xi}$ also tends to $ 0 $. Hence there must exist some vector 
$\xi \in S(B)$ such that the associated positive measure $\mu_+$ has a non-zero 
value on $\{0 \} \subset\mathbb{T}^2$. This contradicts our assumption that 
$B^{\rho (N)}=0$. 
\end{proof}
We refer to the Appendix for details and a certain quantitative 
treatment.

\section{\textbf{Reduced cohomology, ultralimit, and Shalom's machinery}}\label{sec:FSLnZ}
Throughout this section, we let $\Gamma$ be a \textit{discrete and  finitely 
generated} group and $F$ be a finite generating subset of $\Gamma$. 
Shalom \cite{Shal2} has defined the following property: an affine isometric action $\alpha$ of 
$\Gamma$ on a Banach space $B$ 
is said to be \textit{uniform} if 
there exists $\varepsilon >0$ such that 
$\inf_{\xi \in B} \sup_{s \in F} $$\| \alpha (s)\xi -\xi \| $$\geq \varepsilon$ 
holds. 
We note whether an action is uniform is determined independently of the choice of finite generating 
set $F$.  The conception of uniformity of actions is closely related to the 
closure $\overline{B} ^1 (\Gamma;B, \rho)$ of the coboundary 
$B^1 (\Gamma;B, \rho)$. More precisely,  for any isometric 
representation $\rho$, a $\rho$-cocycle $c$ is in 
$\overline{B} ^1 (\Gamma;B, \rho)$ if and only if the associated affine action 
$\alpha = \rho +c$ is \textit{not} uniform. 
\begin{defn}\label{def:reduced cohom}
The \textit{reduced first cohomology} of $\Gamma$ with $\rho$-coefficient 
is defined as the additive group 
$\overline{H} ^1 (\Gamma ;B, \rho)$$=Z^1(\Gamma ;B, \rho) / \overline{B} ^1(\Gamma ;B, \rho)$.
\end{defn}
In \cite[Theorem 6.1]{Shal2}, Shalom has shown the following theorem: ``
\textit{Suppose }$G$\textit{ is a}
 compactly generated \textit{topological group. If} $G$ 
 \textit{fails to have }(FH)\textit{, 
 then there exists a unitary representation }$(\pi ,\mathcal{H})$\textit{ with }
 $\overline{H} ^1 (G ;\mathcal{H}, \pi) \ne 0$." 
At least in the case of discrete groups, one can extend this theorem 
to more general situations. One extension was essentially found by 
 Gromov \cite{Gr}, and his idea is to take a \textit{scaling limit}.

An \textit{ultralimit} means a unital, positive and multiplicative 
$*$-homomorphism $\omega $-$\lim :\ell^{\infty}(\mathbb{N})\to \mathbb{C}$ such 
that for any $\left( \xi_n \right) _{n=0}^{\infty}$ converging to some 
element, $\omega $-
$\lim \left( \xi_n \right) = \displaystyle{\lim_{n \to \infty} \xi_n}$ holds. 
Choose any ultralimit $\omega $-$ \lim$ and fix. Then one can define 
 the \textit{ultralimit of Banach spaces} 
 $\left( B_{\omega}, \| \cdot \|_{\omega} , \zeta_{\omega} \right)$ 
 for any sequence $\left( B_n, \| \cdot \|_n , \zeta_n  \right)_n$ 
 of (affine) Banach spaces, norms and base points. 
Moreover, let $(\alpha_n , B_n)_n$ be a sequence of affine isometric actions 
of $\Gamma$. If the condition 
 $\sup_{s\in F} \sup_{n} \| \alpha_n (s)\zeta_n -\zeta_n \| < + \infty $ holds, 
then we can naturally define the \textit{ultralimit of actions} 
$\alpha_{\omega}$ on $B_{\omega}$. We refer to Silberman's website \cite{Sil}
 for details of above and for a proof of the following proposition. 
\begin{prop}$($proposition of scaling limit$)$\label{prop:scallim}
Let $\alpha$ be an affine isometric action of $\Gamma$ on a Banach space $B$. 
Suppose $\alpha$ is not uniform but has no fixed point. Then there exist a 
sequence of base points and positive numbers $(\zeta_n, b_n)$  with 
$\lim_{n} b_n = +\infty$ such that the ultralimit action $\alpha_{\omega}$ 
on $B_{\omega}= \omega$-$\lim \left( B, b_n \| \cdot \|, \zeta_n \right)$ is 
uniform.
\end{prop}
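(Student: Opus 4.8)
\textbf{Proof proposal for Proposition~\ref{prop:scallim}.}

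The plan is to argue by contradiction: assume $\alpha$ is not uniform, so that the cocycle $c$ lies in $\overline{B}^1(\Gamma;B,\rho)\setminus B^1(\Gamma;B,\rho)$, but $\alpha$ has no fixed point. Unpacking the definitions, ``$c\in\overline{B}^1$'' means there is a sequence $\xi_n\in B$ with $\sup_{s\in F}\|\alpha(s)\xi_n-\xi_n\|\to 0$, while ``no fixed point'' (equivalently $c\notin B^1$, by Corollary~\ref{cor:cocy} and superreflexivity) means every orbit is unbounded, so in particular the displacements $d_n:=\sup_{s\in F}\|\alpha(s)(0)-\alpha(s)(\xi_n)+\xi_n-0\|$... more simply, one shows $\|\xi_n\|\to\infty$ (if the $\xi_n$ stayed bounded, a Chebyshev-center argument as in Lemma~\ref{lem:cheb} would produce an honest fixed point). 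First I would make these reductions precise and set $b_n:=1/\sup_{s\in F}\|\alpha(s)\xi_n-\xi_n\|$, which tends to $+\infty$, and $\zeta_n:=\xi_n$ as the base point in the $n$-th copy of $B$ rescaled by $b_n$.

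Next I would verify the hypothesis needed to form the ultralimit action, namely $\sup_{s\in F}\sup_n b_n\|\alpha_n(s)\zeta_n-\zeta_n\|<+\infty$; but with our choice of $b_n$ and $\zeta_n$ this quantity is exactly $\sup_{s\in F}b_n\|\alpha(s)\xi_n-\xi_n\|\le 1$ for every $n$, so the ultralimit action $\alpha_\omega$ on $B_\omega=\omega\text{-}\lim(B,b_n\|\cdot\|,\zeta_n)$ is well defined. It remains to check that $\alpha_\omega$ is uniform, i.e. that $\inf_{\eta\in B_\omega}\sup_{s\in F}\|\alpha_\omega(s)\eta-\eta\|_\omega\ge\varepsilon$ for some $\varepsilon>0$. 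The displacement of the base point $\zeta_\omega$ under $s$ is $\omega\text{-}\lim b_n\|\alpha(s)\xi_n-\xi_n\|$, and by choosing the $\xi_n$ so that the supremum over $F$ is essentially attained (or passing to the defining sequence of the infimum) one arranges $\sup_{s\in F}\|\alpha_\omega(s)\zeta_\omega-\zeta_\omega\|_\omega=1$. For a general $\eta\in B_\omega$ one must rule out that $\eta$ is an approximate fixed point: here I would use that $\alpha$ has no fixed point together with a uniform convexity (Chebyshev-center) estimate that survives the ultralimit — if some $\eta$ had $\sup_{s\in F}\|\alpha_\omega(s)\eta-\eta\|_\omega$ arbitrarily small, lifting $\eta$ to a sequence $(\eta_n)$ and unscaling would contradict the non-existence of bounded orbits for $\alpha$, because superreflexivity (hence uniform convexity of a compatible norm) is inherited by $B_\omega$ and forces existence of a fixed point from almost-invariant vectors. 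Making this last quantitative bound uniform in $\eta$ is the heart of the matter.

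The main obstacle I anticipate is precisely this uniformity: showing that the ultralimit action has a \emph{uniform} lower bound on displacements, not merely that $\alpha_\omega$ is nonzero. The naive scaling only controls the displacement of the single base point $\zeta_\omega$; to get uniformity over all of $B_\omega$ one needs that the ``almost-fixed-point'' phenomenon cannot occur in $B_\omega$, which in turn relies on the quantitative Chebyshev-center / uniform convexity estimate of Lemma~\ref{lem:cheb} being stable under ultralimits of ucus norms with uniformly controlled moduli of convexity — this is the kind of stability issue flagged in the acknowledgments. I would therefore first record that an ultralimit of Banach spaces with a common modulus of convexity is again uniformly convex with (at least) that modulus, then translate ``$\alpha$ has unbounded orbits'' into ``for every $R$ there is no $R$-almost-fixed point at scale $b_n$,'' and finally combine these to get the desired $\varepsilon$. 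The cocycle-identity bookkeeping and the check that $\alpha_\omega$ is genuinely an affine isometric action are routine and I would relegate them to Silberman's notes \cite{Sil}.
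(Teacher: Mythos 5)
Your setup — taking $\xi_n$ with $d(\xi_n):=\sup_{s\in F}\|\alpha(s)\xi_n-\xi_n\|\to 0$, setting $b_n=1/d(\xi_n)$ and $\zeta_n=\xi_n$, and checking that the ultralimit action is well defined with $\sup_{s\in F}\|\alpha_\omega(s)\zeta_\omega-\zeta_\omega\|_\omega=1$ — is the correct opening, and you have rightly identified the crux: a lower bound on displacement that is uniform over all of $B_\omega$, not just at the base point. But the mechanism you propose to obtain it does not work, and the actual proof (this is Gromov's scaling-limit trick, laid out in Silberman's notes, which the paper cites in lieu of giving the argument) uses a different, purely metric idea.

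The gap is in the sentence ``lifting $\eta$ to a sequence $(\eta_n)$ and unscaling would contradict the non-existence of bounded orbits for $\alpha$, because superreflexivity \dots forces existence of a fixed point from almost-invariant vectors.'' That implication is false, and is in fact the precise failure that distinguishes $(\mathrm{F}_B)$ from $(\mathrm{T}_B)$: an affine isometric action on a uniformly convex space can have $\inf_\xi d(\xi)=0$ and still have no fixed point — this is exactly the hypothesis of the proposition. Lifting a low-displacement $\eta\in B_\omega$ only produces $\eta_n$ with $d(\eta_n)<\epsilon/b_n$, i.e.\ points whose displacement is even smaller than $d(\xi_n)$; since $\xi_n$ was chosen arbitrarily subject to $d(\xi_n)\to 0$, this gives no contradiction. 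The Chebyshev-center lemma needs a \emph{bounded} orbit to produce a fixed point, and nothing here produces one. Nor does stability of the modulus of convexity under ultralimits help: $B_\omega$ being ucus does not preclude $\alpha_\omega$ from being non-uniform without a fixed point.

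The missing idea is to choose the $\xi_n$ to be \emph{approximate local minimizers of $d$ at their own scale}. Concretely, one first proves: for every $\lambda>1$ and $R>0$ there is $\xi$ with $d(\xi)$ arbitrarily small such that $d(\eta)\ge d(\xi)/\lambda$ whenever $\|\eta-\xi\|\le R\,d(\xi)$. (If this failed at every $\xi$, one could iterate, producing a Cauchy sequence whose limit would be a genuine fixed point, contradicting the hypothesis — note this uses only completeness, no convexity.) Taking $\lambda=1+1/n$, $R=n$, one obtains $\xi_n$ with $d(\xi_n)\to 0$ and $d(\eta)\ge d(\xi_n)/(1+1/n)$ on the ball of radius $n\,d(\xi_n)$ around $\xi_n$. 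Now \emph{every} $\eta\in B_\omega$ is represented by a sequence $(\eta_n)$ with $b_n\|\eta_n-\xi_n\|$ bounded, hence eventually $\le n\,d(\xi_n)$ in the original scale, so $b_n d(\eta_n)\ge 1/(1+1/n)$ and $\sup_{s\in F}\|\alpha_\omega(s)\eta-\eta\|_\omega\ge 1$. This is where the uniform lower bound comes from; it is a property of the carefully selected base points, not of the geometry of $B_\omega$. In particular the proposition as stated holds for an arbitrary Banach space, with no superreflexivity needed.
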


\begin{cor}\label{cor:reduced}
Let $\mathcal{C}$ be a class of Banach spaces which is stable under 
ultralimits. If a finitely generated discrete group $\Gamma$ does not have 
property $(\mathrm{F}_{\mathcal{C}})$, then there exist $B \in \mathcal{C}$ 
and an isometric representation $\rho$ of $\Gamma$ on $B$ such that 
$\overline{H} ^1(\Gamma ;B, \rho) \ne 0$. 
\end{cor}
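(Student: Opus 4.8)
The goal is Corollary~\ref{cor:reduced}: if $\Gamma$ fails $(\mathrm{F}_{\mathcal{C}})$, then some $B\in\mathcal{C}$ carries an isometric representation $\rho$ with $\overline{H}^1(\Gamma;B,\rho)\ne 0$. The strategy is the standard scaling-limit dichotomy: failure of $(\mathrm{F}_B)$ means there is an affine isometric action $\alpha$ on some $B\in\mathcal{C}$ with no fixed point; there are then two cases according to whether $\alpha$ is uniform. If $\alpha$ \emph{is} uniform, we are essentially done on $B$ itself: a uniform action has cocycle $c\notin\overline{B}^1(\Gamma;B,\rho)$ (this is exactly the characterization recalled just before Definition~\ref{def:reduced cohom}), so $\overline{H}^1(\Gamma;B,\rho)\ne 0$ with $\rho$ the linear part of $\alpha$. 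If $\alpha$ is \emph{not} uniform but still fixed-point-free, we pass to a scaling limit via Proposition~\ref{prop:scallim} to manufacture a uniform action on an ultralimit space, and then apply the uniform case there.

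\textbf{Steps, in order.} First I would invoke the hypothesis: since $\Gamma$ does not have $(\mathrm{F}_{\mathcal{C}})$, by Definition~\ref{def:property(FH)} there is $B\in\mathcal{C}$ and an affine isometric action $\alpha=\rho+c$ of $\Gamma$ on $B$ with no global fixed point. Second, split into the two cases above. In the uniform case, note that $\overline{B}^1(\Gamma;B,\rho)$ is precisely the set of $\rho$-cocycles whose associated affine action is not uniform, so $c$ represents a nonzero class in $\overline{H}^1(\Gamma;B,\rho)$; take this $B$ and $\rho$. Third, in the non-uniform fixed-point-free case, apply Proposition~\ref{prop:scallim} to obtain base points $\zeta_n$ and scalars $b_n\to+\infty$ such that the ultralimit action $\alpha_\omega$ on $B_\omega=\omega\text{-}\lim(B,b_n\|\cdot\|,\zeta_n)$ is uniform. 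Fourth, check that $B_\omega\in\mathcal{C}$: this is where the hypothesis that $\mathcal{C}$ is stable under ultralimits is used, noting that rescaling the norm by a constant $b_n$ does not leave $\mathcal{C}$ (the classes $L^p$ and $[\mathcal{H}]$ are manifestly closed under scaling norms, and in fact closed under ultralimits — for $L^p$ this is classical, for $[\mathcal{H}]$ one checks the ultralimit is again isomorphic to a Hilbert space). Fifth, write $\alpha_\omega=\rho_\omega+c_\omega$; since $\alpha_\omega$ is uniform, $c_\omega\notin\overline{B}^1(\Gamma;B_\omega,\rho_\omega)$, so $\overline{H}^1(\Gamma;B_\omega,\rho_\omega)\ne 0$, and $(B_\omega,\rho_\omega)$ is the desired pair.

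\textbf{Main obstacle.} The only genuinely nontrivial point is the stability bookkeeping in the non-uniform case — verifying that the ultralimit construction really stays inside $\mathcal{C}$ and that the limit action $\alpha_\omega$ is well-defined. Well-definedness of $\alpha_\omega$ requires the boundedness condition $\sup_{s\in F}\sup_n\|\alpha_n(s)\zeta_n-\zeta_n\|<+\infty$ for the rescaled actions $\alpha_n$ on $(B,b_n\|\cdot\|)$; this is exactly what Proposition~\ref{prop:scallim} is engineered to supply (the $\zeta_n$ and $b_n$ are chosen so that the rescaled displacement at $F$ stays bounded while the action becomes uniform), so one simply cites it. The stability of $\mathcal{C}$ under ultralimits is the hypothesis of the corollary, but one should remark that it does hold for the classes of interest; since Proposition~\ref{prop:scallim} and the ultralimit formalism are quoted from \cite{Sil} and the stability facts for $L^p$ and $[\mathcal{H}]$ are standard (the referee-acknowledgment to M.~Tanaka in the introduction signals that the authors have been careful about exactly this point), there is no real analytic difficulty left — the proof is a short assembly of the two cases.

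\begin{proof}
Since $\Gamma$ does not have property $(\mathrm{F}_{\mathcal{C}})$, by Definition~\ref{def:property(FH)} there exist $B \in \mathcal{C}$ and an affine isometric action $\alpha = \rho + c$ of $\Gamma$ on $B$ with no global fixed point.

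If $\alpha$ is uniform, then by the characterization of $\overline{B}^1(\Gamma;B,\rho)$ recalled before Definition~\ref{def:reduced cohom}, the cocycle $c$ does not lie in $\overline{B}^1(\Gamma;B,\rho)$, so $\overline{H}^1(\Gamma;B,\rho) \ne 0$ and we are done.

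If $\alpha$ is not uniform, then, $\alpha$ being fixed-point-free, Proposition~\ref{prop:scallim} provides base points $\zeta_n$ and positive numbers $b_n$ with $\lim_n b_n = +\infty$ such that the ultralimit action $\alpha_\omega$ on $B_\omega = \omega\text{-}\lim(B, b_n\|\cdot\|, \zeta_n)$ is uniform. Since each $(B, b_n\|\cdot\|)$ is just $B$ with a rescaled norm, it lies in $\mathcal{C}$, and as $\mathcal{C}$ is stable under ultralimits we get $B_\omega \in \mathcal{C}$. Writing $\alpha_\omega = \rho_\omega + c_\omega$ for its linear and cocycle parts, uniformity of $\alpha_\omega$ gives, by the same characterization as above, $c_\omega \notin \overline{B}^1(\Gamma;B_\omega,\rho_\omega)$, hence $\overline{H}^1(\Gamma;B_\omega,\rho_\omega) \ne 0$. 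Thus $B_\omega$ and $\rho_\omega$ have the required property.
\end{proof}
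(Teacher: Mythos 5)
Your proof is correct and matches the paper's intended argument: the paper states Corollary~\ref{cor:reduced} immediately after Proposition~\ref{prop:scallim} without a written proof, precisely because it follows by the dichotomy you describe (uniform action gives a nonzero reduced class directly; non-uniform fixed-point-free action is fed into the scaling-limit proposition, and stability of $\mathcal{C}$ under ultralimits keeps the output space in $\mathcal{C}$). Your added remark that rescaling the norm must keep the space in $\mathcal{C}$ is a reasonable explicit acknowledgment of a point the paper leaves implicit.
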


We extend the conception of the \textit{Shalom property}, which 
is found in \cite[Definition 12.1.13]{BOz}. Before  defining this, we recall the notion of \textit{bounded generation}. In this paper, subsets $(S_j )j\in J$ of a group $G$ ($J$ is in some index set and usually we assume some $S_j$ contains the identity $e_G$) is said to \textit{boundedly generate} $G$ if there exists $l\in \mathbb{N}$ such that $G= (\bigcup_{j\in J}S_j)^{l}$ holds. Namely, any $g\in G$ can be written as a product of $l$ elements where each of them is respectively an element in some corresponding $S_j$. (Note: keep in mind that in some other literature, the terminology ``bounded generation" is used for the following \textit{confined} situation: $J$ is a finite set, and all $S_j$'s are cyclic groups.)

\begin{defn}\label{def:shalom}
Let $B$ be a Banach space and $\Gamma$ be a finitely generated 
group. A triple of subgroups $(G, H_1, H_2)$ of $\Gamma$ is said to have the 
\textit{Shalom property for }$(F_{B})$ if all of the following four conditions hold:
\begin{enumerate}[$\mathrm{(}$1$\mathrm{)}$]
   \item The group $\Gamma$ is generated by $H_1$ and $H_2$ together.
   \item The subgroup $G$ normalizes $H_1$ and $H_2$.
   \item The group $\Gamma$ is boundedly generated 
   by $G, H_1$, and $H_2$. 
   \item For both $i\in \{ 1,2 \}$, $H_i < \Gamma$ has relative $(\mathrm{F}_{B})$.
\end{enumerate}
\end{defn}

Now we shall introduce Shalom's machinery. We refer to \cite[\S 4]{Shal3} for 
the original idea for the case of that $\mathcal{C}=\mathcal{H}$. 

\begin{thm}$($Shalom's Machinery$)$\label{thm:Shalom}
Let $\mathcal{C}$ be a class of superreflexive Banach spaces which is stable 
under ultralimits. Let $\Gamma$ be a finitely generated group with  
finite abelianization. Suppose there exist subgroups $G, H_1$, and $H_2$ of 
$\Gamma$ such that $(G, H_1 , H_2)$ has the Shalom property for 
$(\mathrm{F}_{\mathcal{C}})$.  Then $\Gamma$ has property 
$(\mathrm{F}_{\mathcal{C}})$.
\end{thm}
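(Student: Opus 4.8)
The plan is to exploit the two properties packaged into the Shalom property---bounded generation together with relative $(\mathrm{F}_\mathcal{C})$ for $H_1$ and $H_2$---by passing through a reduced-cohomology reduction. Suppose for contradiction that $\Gamma$ fails to have $(\mathrm{F}_\mathcal{C})$. Since $\mathcal{C}$ is stable under ultralimits, Corollary~\ref{cor:reduced} produces a Banach space $B\in\mathcal{C}$ and an isometric representation $\rho$ of $\Gamma$ with $\overline{H}^1(\Gamma;B,\rho)\neq 0$. Thus there is a $\rho$-cocycle $c$ representing a nonzero class in $\overline{H}^1$; equivalently, the associated affine action $\alpha=\rho+c$ is uniform. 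Because $B$ is superreflexive, I may normalize the norm as in Remark~\ref{rem:superref} and use the decomposition machinery of Proposition~\ref{prop:decomp} where needed. The first substantive step is to observe that the class of $c$ in $\overline{H}^1$ is fixed by the natural $\Gamma$-action (since $\Gamma$ has finite abelianization, hence $H^1(\Gamma;\mathbb{C})=0$ and one controls $\Gamma$-invariance of reduced cohomology classes in the standard way), so for each $g\in\Gamma$ the cocycle $s\mapsto c(gsg^{-1})+\ldots$ differs from $c$ by an element of $\overline{B}^1$.

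Next I would use the hypothesis that $H_1<\Gamma$ and $H_2<\Gamma$ have relative $(\mathrm{F}_\mathcal{C})$. This means the restriction $\alpha|_{H_i}$ has an $H_i$-fixed point; by Corollary~\ref{cor:cocy}, the $H_i$-orbit of any point is bounded, so $c|_{H_i}$ is a bounded perturbation of an $H_i$-coboundary---after subtracting a coboundary (which does not change the class in $H^1(\Gamma;B,\rho)$) we may assume $c|_{H_i}=0$ for, say, $i=1$, and $c|_{H_2}$ bounded. The key geometric input is condition (3): every $g\in\Gamma$ can be written as a product of boundedly many elements, each lying in $G$, $H_1$, or $H_2$. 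Using the cocycle identity, $\|c(g)\|$ is then controlled by a uniformly bounded number of terms of the form $\|c(h)\|$ with $h$ in $G$, $H_1$, or $H_2$, together with the norm-$1$ action of $\rho$. Since $c$ vanishes on $H_1$ and is bounded on $H_2$, the only potentially unbounded contribution comes from the $G$-factors; but condition (2), that $G$ normalizes $H_1$ and $H_2$, forces the $G$-part of the cocycle to interact with the (now trivial or bounded) restrictions to $H_1,H_2$ in a way that keeps $c$ bounded on all of $\Gamma$. Once $c(\Gamma)$ is bounded, Corollary~\ref{cor:cocy} gives a global fixed point, so $c\in B^1(\Gamma;B,\rho)$, contradicting uniformity of $\alpha$ (a uniform action cannot have a fixed point, as its cocycle is bounded away from the coboundaries in the uniform sense). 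This contradiction yields $(\mathrm{F}_\mathcal{C})$.

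The main obstacle I anticipate is the bookkeeping in the bounded-generation step: I cannot literally arrange $c|_{H_1}=c|_{H_2}=0$ simultaneously by subtracting a single coboundary, so I must argue more carefully. The correct approach is probably to work with the reduced class and a convergent sequence of cocycles, or to pass to a scaling limit (Proposition~\ref{prop:scallim}) so that the uniform action becomes genuinely fixed-point-free with bounded generators, and then derive a contradiction by showing the orbit map is simultaneously bounded along each $H_i$-direction and hence, via the bounded word decomposition, globally bounded. The normalization of $G$ over $H_1,H_2$ is essential here: it is what lets one commute $G$-factors past $H_i$-factors when estimating a word, so that a product of $\ell$ syllables collapses (up to bounded error) into a product with all $G$-syllables gathered and all $H_i$-syllables contributing boundedly. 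Carrying out this rearrangement while tracking the $\rho$-twists, and confirming that the $G$-syllables contribute nothing unbounded because $G$ itself need not have any fixed-point property---only the $H_i$ do---is the delicate point; the resolution is that $c$ restricted to $G$ only ever appears conjugated into the $N$-directions where relative $(\mathrm{F}_B)$ already applies, mirroring exactly the structure used in the proof of Theorem~\ref{thm:TtoF}.
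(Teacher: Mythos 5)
There is a genuine gap. You correctly assemble the tools (Corollary~\ref{cor:reduced} to get a uniform action, relative $(\mathrm{F}_{\mathcal{C}})$ for $H_1,H_2$, bounded generation, Chebyshev center), and you are right to be worried about the bookkeeping: the naive plan of subtracting a coboundary to flatten $c|_{H_1}$ and then using bounded generation cannot close, because \emph{nothing in the hypotheses controls $c|_G$}. Condition (4) gives a fixed point only for $H_1$ and $H_2$; $G$ has no fixed-point property, so the $G$-syllables in a bounded word decomposition contribute a priori unbounded terms. The assertion that "condition (2) forces the $G$-part of the cocycle to... keep $c$ bounded" is not justified, and the appeal to "$c$ restricted to $G$ only ever appears conjugated into the $N$-directions" imports a normal abelian subgroup $N$ from Theorem~\ref{thm:TtoF} that has no analogue in the Shalom property. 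Your proposed remedy — pass to a scaling limit and commute $G$-syllables past $H_i$-syllables — is stated as a hope rather than an argument and does not by itself supply the missing control.

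The missing idea is a variational one. Fix a finite generating set $F$ and consider the class $\mathcal{A}$ of pairs $(\alpha,E)$ with $E$ uniformly convex of modulus at least that of a given $B_0\in\mathcal{C}$ and with $\sum_{s\in F}\|\alpha(s)\xi-\xi\|\geq 1$ for every $\xi$ (normalized uniformity); $\mathcal{A}$ is nonempty (by Corollary~\ref{cor:reduced}) and stable under ultralimits. Over all $(\alpha,E)\in\mathcal{A}$, minimize the distance $D=\|\xi^1-\xi^2\|$ between an $\alpha(H_1)$-fixed point $\xi^1$ and an $\alpha(H_2)$-fixed point $\xi^2$ (nonempty by condition (4)); an ultralimit realizes the infimum at some $(\alpha_\infty,E_\infty)$ with minimizer $(\xi^1_\infty,\xi^2_\infty)$. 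Now comes the payoff from condition (2): for $g\in G$, $\alpha_\infty(g)\xi^i_\infty$ is again $\alpha_\infty(H_i)$-fixed and the pair $(\alpha_\infty(g)\xi^1_\infty,\alpha_\infty(g)\xi^2_\infty)$ also realizes $D$; strict convexity of $E_\infty$ then forces the $G$-orbit of the component of $\xi^i_\infty$ in $E'_{\infty,\rho_\infty(\Gamma)}$ to be a single point (otherwise one produces a nonzero $\rho_\infty(\Gamma)$-invariant vector in $E'_{\infty,\rho_\infty(\Gamma)}$, impossible). The component in $E_\infty^{\rho_\infty(\Gamma)}$ has bounded $G$-orbit because $\Gamma$ has finite abelianization. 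Only \emph{then} does bounded generation apply: the $\alpha_\infty(\Gamma)$-orbit of $\xi^1_\infty$ is bounded, so there is a fixed point, contradicting $(\alpha_\infty,E_\infty)\in\mathcal{A}$. Without this minimization step there is no mechanism to tame the $G$-contributions, which is exactly where your proposal stalls.

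A small secondary point: the remark that $\overline{H}^1$ is $\Gamma$-invariant and "$H^1(\Gamma;\mathbb{C})=0$" is not the role finite abelianization plays here. It is used to bound the trivial-coefficient part of the orbit, not to control a $\Gamma$-action on cohomology classes.
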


\begin{proof}
Suppose the contrary. Then from Corollary \ref{cor:reduced}, there must exist 
 an affine isometric action $\alpha_0$ on some $B_0 \in \mathcal{C}$ such 
that $\alpha_0$ is uniform. For simplicity, we may assume that $B_0$ is uc. 
Fix a finite generating set $F$ of $\Gamma$. We set $\mathcal{A}$ as the 
class of all pairs $(\alpha , E)$ of an affine isometric action and a uc 
Banach space with the following two conditions: first, 
 for any $\xi\in E$, $\sum_{s\in F} \| \alpha (s)\xi-\xi\|_E \geq 1$ 
holds. Second, for all $0<\varepsilon <2$, the value 
of the modulus of convexity of $E$ at $\varepsilon$ is not less than that of 
$B_0$. (We refer to Definition \ref{def:modulus}.) We note that this class 
$\mathcal{A}$ is non-empty. Furthermore, thanks to 
\cite[\S 2, Theorem 4.4]{AK}, $\mathcal{A}$ is stable under ultralimits. 

Next we define a real number $D$ as 
$\inf \{ \|\xi^1 -\xi^2 \| : (\alpha , E)\in \mathcal{A} \}$. Here for 
$i\in \{ 1,2\}$, $\xi^i$ moves through all $\alpha (H_i)$-fixed points in $E$. We 
check that the definition above makes sense (, namely, this set is nonempty and hence this infimum is finite) with the aid of condition $(4)$. By taking an ultralimit, one can show that $D$ is actually a minimum. 
Let $\xi^1_{\infty}$ and $\xi^2_{\infty}$ be vectors which attain the minimum $D$. 
Also let $(\alpha_{\infty}, E_{\infty}) \in \mathcal{A}$ be the associated 
affine action and $\rho_{\infty}$ be the linear representation for $\alpha_{\infty}$. 

Decompose the action $\alpha_{\infty}$ into $\alpha_{\infty}^{\mathrm{triv}}$ and $\alpha_{\infty}^{'}$, where the former takes values in $E_{\infty}^{\rho_{\infty} (\Gamma)}$ and the latter tales values in $E_{\infty ,\ \rho_{\infty} (\Gamma)} ^{'}$. Then from the strict convexity of $E_{\infty}$ and condition $(2)$, one can conclude that the $\alpha_{\infty}^{'} (G)$-orbit of $E_{\infty, \rho_{\infty}(\Gamma)}^{'}$ component of each $\xi^i_{\infty}$, $i\in \{ 1,2 \}$ is one point (otherwise $E_{\infty ,\, \rho_{\infty} (\Gamma)} ^{'}$ must contain a non-zero $\rho_{\infty}(\Gamma)$-invariant vector and contradiction occurs), and hence bounded. Note that $\alpha_{\infty}^{\mathrm{triv}} (G)$-orbits of the $E_{\infty}^{\rho (\Gamma)}$ components are also bounded because $\Gamma$ has  finite abelianization. Therefore in particular, 
$\alpha_{\infty} (\Gamma )$-orbit of $\xi^1_{\infty}$ must be bounded with the use of condition (3). 
However this exactly means $\alpha_{\infty}$ has a global fixed point, and it contradicts the definition of $\mathcal{A}$.
\end{proof}

\begin{proof}(\textit{Theorem \ref{thm:FSLnZ}})
Let $\Gamma = SL_n(A_k)$, $G \cong SL_{n-1}(A_k)$, and 
$H_1 ,H_2 \cong {A_k}^{n-1}$. Here in $\Gamma$ we realize $G$ as in the left upper corner (, namely, the 
$((1$-$(n-1))\times(1$-$(n-1)))$-th parts), realize $H_1$ as in the 
$((1$-$(n-1))\times n) $-th unipotent parts, and realize $H_2$ as in the 
$(n \times(1$-$(n-1))) $-th unipotent parts. 
One can directly check that $\Gamma$ has the trivial abelianization from commutator relations among elementary matrices. 
We claim that 
$(G, H_1, H_2)$ has the Shalom property for $(\mathrm{F}_{\mathcal{C}})$. Indeed, 
conditions $(1)$ and $(2)$ are confirmed directly, and condition $(4)$ 
follows from Theorem \ref{thm:TtoF} and Theorem \ref{thm:Tsemi}. For condition 
$(3)$, the following deep theorem of Vaserstein \cite{Vas} insures the assertion: ``\textit{For any }$n\geq 3$\textit{, the group }$SL_n(A_k)$\textit{ is bounded generated by} $SL_2(A_k)$ $($\textit{in the upper left corner}$)$ \textit{and all elementary matrices.}"

Thanks to Theorem \ref{thm:Shalom}, for our proof it only remains to show that $\mathcal{C}$ 
is stable under  ultralimits. In the case of that $\mathcal{C}=L^p$, it 
follows from a work of S. Heinrich \cite{He} (we also refer to \cite[\S 15.Theorem 3]{Lac} and \cite[\S 2]{AK}). In the case of that $\mathcal{C}=[\mathcal{H}]$, it 
 is not stable. However for any $M \geq 1$, the following class 
$\mathcal{B}_M$ is stable under ultralimits: 
we define $\mathcal{B}_M$ as the 
class of all elements $B$ in $[\mathcal{H}]$ which have 
compatible Hilbert norms with the norm ratio $\leq M$. 
By noticing that 
$[\mathcal{H}]= \bigcup_{M \geq 1} \mathcal{B}_M$, one accomplishes the 
proof.
\end{proof}

The author does not know whether the assertion of Theorem \ref{thm:FSLnZ} 
is satisfied for the noncommutative universal lattice 
$EL_n$$ (\mathbb{Z} \langle x_1 , x_2 , \cdots , x_k \rangle)$ $(n \geq 3)$. In the case above with $n\geq 4$, although most of the ingredients are still valid, 
the bounded generation property may fail. We note 
that nevertheless, M. Ershov and A. Jaikin-Zapirain \cite{EZ} have proved 
property $(\mathrm{T})$ for noncommutative universal lattices for $n\geq 3$.

\section{\textbf{Quasi-actions and property $(\mathrm{FF}_B)$}}\label{sec:FFB}
\begin{defn}\label{def:quasi}
Let $B$ be a Banach space and $\Gamma$ be a group.
\begin{itemize}
   \item  A  map $\beta$ from $\Gamma$ to the set of all affine 
   isometries on $B$ is called a \textit{quasi-action} if the expression 
   $\sup_{g,h \in \Gamma}\sup_{\xi\in B } \| \beta (gh)\xi -\beta(g) \beta (h)\xi\|  $   is finite.
   \item  Let $\rho$ be an isometric representation. 
   A  map $b$ from $\Gamma$ to $B$ is called a 
   \textit{quasi-}$\rho$\textit{-cocycle}  if the expression 
   $\sup_{g,h \in \Gamma} \| b(gh)- b(g) - \rho (g) b (h) \|  $   is finite.
\end{itemize}
\end{defn}
\begin{rem}\label{rem:almstcocy}
In the definition of quasi-actions, one 
can decompose the map $\beta$ 
into the linear part $\rho$ and the transition part $b$, namely, 
$\beta (g)\xi= \rho (g)\xi + b(g)$ for any $g\in \Gamma$ and $\xi\in B$. 
Then from a standard argument, the map $\beta$ is a quasi-action 
if and only if $\rho$ is a group representation 
and $b$ is a quasi-$\rho$-cocycle. 
\end{rem}

Next we define property $(\mathrm{FF}_B)$ as follows. 
We mention that the original terminology in Monod's work \cite{Mon} for 
$(\mathrm{FF}_{\mathcal{H}})$ is 
\textit{property }$(TT)$. We use the 
terminology $(\mathrm{FF}_{B})$ because this property is a quasification of 
$(\mathrm{F}_B)$, \textit{not} of $(\mathrm{T}_B)$. 

\begin{defn}\label{def:ffb}
Let $B$ be a Banach space.
\begin{itemize}
   \item  A pair $G>N$ of groups is said to have \textit{relative property }
   $(FF_B)$ if for any quasi-action on $B$, some (or equivalently, any) 
   $N$-orbit is bounded. This is equivalent to the condition that 
   for any isometric representation $\rho$ of $G$ on $B$ and any 
   quasi-$\rho$-cocycle $b$, $b(N)$ is bounded. 
   \item  
   A group $\Gamma$ is said to have \textit{property} 
   $(FF_B)$ if $\Gamma > \Gamma$ has relative $(\mathrm{FF}_B)$.
   \item  
   Then a group $\Gamma$ is said to have \textit{property} 
   $(FF_B)/T$ (, which is called ``\textit{property} $(FF_B)$ \textit{modulo trivial part}",) if for any isometric representation $\rho$ of $\Gamma$ on $B$ 
   and    any quasi-$\rho$-cocycle $b$, $b'(\Gamma)$ is 
   bounded, where $b'\colon \Gamma \to B/B^{\rho(\Gamma)}$ is the natural quasi-cocycle constructed from the projection of $b$ to $B/B^{\rho(\Gamma)}$. If $B$ is superreflexive, then this definition is equivalent to the following condition: for any isometric representation $\rho$ of $\Gamma$ on $B$ and 
   any quasi-$\rho$-cocycle $b$, $b_1(\Gamma)$ is 
   bounded. Here we decompose $b$ as $b_0+b_1$ such that $b_0$ takes 
   values in $B_0=B^{\rho (\Gamma)}$ and $b_1$ takes values in 
   $B_1=B'_{\rho(\Gamma)}$. 
\end{itemize}
\end{defn}

By observing our proof of Theorem \ref{thm:TtoF}, one can extend 
the argument to the case of that $c$ is a quasi-$\rho$-cocycle. 
Thus one obtains the following theorem. 
\begin{thm}\label{thm:TtoFF}
With the same notation as one in Theorem \ref{thm:TtoF}, let $B$ be any 
superreflexive Banach space. If $EL_2 (A_k)\ltimes A_k ^2 \triangleright A_{k}^2$ has relative $(\mathrm{T}_B)$, then $SL_3 (A_k)\ltimes A_k ^3 > A_{k}^3$ has relative $(\mathrm{FF}_B)$.
\end{thm}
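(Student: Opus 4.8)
The plan is to revisit the proof of Theorem~\ref{thm:TtoF} line by line and check that every inequality used there degrades by only a bounded additive amount when the cocycle identity is replaced by the quasi-cocycle identity. Write $D_0 = \sup_{g,h}\|c(gh)-c(g)-\rho(g)c(h)\|$ for the defect of the quasi-$\rho$-cocycle $c$; we keep the decomposition $B=B_0\oplus B_1$ with $B_0=B^{\rho(N)}$, $B_1=B'_{\rho(N)}$, and $c=c_0+c_1$ accordingly. Since the projections onto $B_0$ and $B_1$ are $\rho(G)$-equivariant and bounded (Proposition~\ref{prop:decomp}), each $c_i$ is again a quasi-$\rho$-cocycle with defect $O(D_0)$. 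As before it suffices, by Corollary~\ref{cor:cocy}, to bound $c_1(N_1)$. First I would recheck the two bullet facts in Section~\ref{sec:TtoF}: the relation $c_0(Rv)=\rho((R,0))c_0(v)+O(D_0)$ now holds only up to a bounded error, but since any $v\in N$ is a sum of three columns each having an entry equal to $1$, the argument still forces $c_0(N)$ to be bounded (no longer exactly zero, which is harmless); similarly boundedness of $c_1(N_1)$ still implies boundedness of $c_1(N)$ up to a bounded term.

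Next I would redo the commutation estimate. For $h\in N_1$ and $s\in F_0$, since $N_1$ commutes with $F_0$, the chain of (in)equalities
\begin{align*}
\|\rho(s)c_1(h)-c_1(h)\| &\le \|c_1(sh)-c_1(h)-c_1(s)\| + 2D_0 \\
&\le \|c_1(hs)-c_1(h)\| + \|c_1(s)\| + 3D_0 \\
&= \|\rho(h)c_1(s)\| + \|c_1(s)\| + 4D_0 = 2\|c_1(s)\| + 4D_0
\end{align*}
goes through with $D_0$-corrections absorbed, so $\sup_{s\in F_0}\sup_{h\in N_1}\|\rho(s)c_1(h)-c_1(h)\| \le 2C + 4D_0 =: C'$, where $C=\sup_{s\in F_0}\|c_1(s)\|$. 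Then Lemma~\ref{lem:kazhconst}, applied with the relative Kazhdan constant $\mathcal{K}$ (the minimum of the two constants for $G_1\triangleright L$ and $G_2\triangleright N_2$, which is positive by the relative $(\mathrm{T}_B)$ hypothesis), produces for each $\xi\in c_1(N_1)$ a $\rho(L)$-invariant $\eta$ and a $\rho(N_2)$-invariant $\zeta$ with $\|\xi-\eta\|,\|\xi-\zeta\|\le 4\mathcal{K}^{-1}C'$. Writing any $h\in N$ as $h_1h_2$ with $h_2\in N_2$ and $h_1=lh'l^{-1}h'^{-1}$ ($l\in L$, $h'\in N_2$), the telescoping estimate for $\|\rho(h)\xi-\xi\|$ picks up finitely many extra $D_0$ terms from iterated use of the quasi-cocycle identity, giving a bound $\|\rho(h)\xi-\xi\|\le 32\mathcal{K}^{-1}C' + O(D_0)$, uniform in $\xi\in c_1(N_1)$ and $h\in N$.

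Finally I would run the Chebyshev-center contradiction of Lemma~\ref{lem:cheb} exactly as in the original proof: if $c_1(N_1)$ were unbounded, pick $\xi\in c_1(N_1)$ with $\|\xi\|$ exceeding the uniform bound just obtained, so that $\|\rho(h)\xi-\xi\|<\|\xi\|$ for all $h\in N$; uniform convexity of $B_1$ then yields a nonzero $\rho(N)$-invariant vector in $B_1=B'_{\rho(N)}$, contradicting the definition of $B_1$. Hence $c_1(N_1)$, and therefore $c_1(N)$, is bounded, and by Corollary~\ref{cor:cocy} $N=A_k^3$ has a bounded $\beta$-orbit; since $\beta$ was an arbitrary quasi-action, $SL_3(A_k)\ltimes A_k^3\triangleright A_k^3$ has relative $(\mathrm{FF}_B)$. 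The main thing to be careful about — rather than a genuine obstacle — is the bookkeeping of where the defect $D_0$ enters: each step that in the isometric case was an \emph{equality} coming from the cocycle identity now contributes a constant, and one must confirm that only \emph{finitely many} such contributions appear (in particular the decomposition $h=h_1h_2$ and $h_1$ as a single commutator uses the identity a bounded number of times, independently of $h$), so that the final upper bound stays uniform.
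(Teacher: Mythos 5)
Your proposal is correct and is exactly the approach the paper takes: the paper's own ``proof'' of Theorem~\ref{thm:TtoFF} is the one-line remark that the argument for Theorem~\ref{thm:TtoF} extends to quasi-cocycles, and your line-by-line bookkeeping (tracking the defect $D_0$ through the decomposition $c=c_0+c_1$, the commutation estimate on $N_1$, the application of Lemma~\ref{lem:kazhconst}, and the commutator telescoping) is precisely the verification that remark asks the reader to supply. The only minor overstatement is that the final telescoping bound on $\|\rho(h)\xi-\xi\|$ uses only the isometry of $\rho$ and the approximations $\eta,\zeta$, not the quasi-cocycle identity, so no additional $O(D_0)$ term actually enters at that step; this does not affect the conclusion.
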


We define the following property to prove Theorem \ref{thm:FFB}. 

\begin{defn}\label{def:shalomFF}
With the same notation as in Definition \ref{def:shalom}, the triple of subgroups $(G , H_1, H_2)$ of 
$\Gamma$ is said to have the \textit{Shalom property for} $(FF_B)$ if the 
following four  
 conditions $(1)$, $(2)$, $(3)$, and $(4')$ hold: 
conditions $(1)$, $(2)$, 
 and $(3)$ are same as in Definition \ref {def:shalom}. And 
 we define a new condition $(4')$ by replacing 
 \textit{relative} $(F_B)$ with \textit{relative} $(FF_B)$ in condition $(4)$ 
 in Definition \ref{def:shalom}. 
\end{defn}

\begin{prop}\label{prop:bddgen}
Let $B$ be a superreflexive Banach space and $\Gamma$ be a  group. 
Suppose $\Gamma$ has property $(\mathrm{T}_B)$ and 
there exist subgroups $G, H_1$, and $H_2$ of \ $\Gamma$ such that 
$(G, H_1 , H_2)$ has the Shalom property for $(\mathrm{FF}_{B})$. 
Then $\Gamma$ has property $(\mathrm{FF}_{B})/\mathrm{T}$. 
\end{prop}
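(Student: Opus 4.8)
The plan is to mimic the proof of Shalom's machinery (Theorem~\ref{thm:Shalom}), but to replace the ultralimit/Chebyshev‑center device by a direct appeal to property $(\mathrm{T}_B)$; this substitution is precisely what lets us drop the finite abelianization hypothesis while only obtaining the conclusion \emph{modulo the trivial part}. So fix an isometric representation $\rho$ of $\Gamma$ on $B$ and a quasi‑$\rho$‑cocycle $b$, with quasi‑cocycle constant $D=\sup_{x,y}\|b(xy)-b(x)-\rho(x)b(y)\|$; by Remark~\ref{rem:superref} I may assume the norm of $B$ is ucus. Using Proposition~\ref{prop:decomp} and Proposition~\ref{prop:super} I would decompose $B=B_0\oplus B_1$ with $B_0=B^{\rho(\Gamma)}$, $B_1=B'_{\rho(\Gamma)}$, let $P_1\colon B\to B_1$ be the associated projection (of norm $\le 2$), and write $b=b_0+b_1$ accordingly. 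Since $B_0,B_1$ are $\rho(\Gamma)$‑invariant and $P_1\rho=\rho P_1$, the map $b_1=P_1\circ b$ is again a quasi‑$\rho$‑cocycle (constant $\le 2D$) and $\rho$ has no nonzero invariant vector on $B_1$; by the superreflexive form of Definition~\ref{def:ffb} it then suffices to prove that $b_1(\Gamma)$ is bounded.

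The argument proceeds in three steps. First, condition $(4')$ of the Shalom property for $(\mathrm{FF}_B)$ gives that $b(H_1)$ and $b(H_2)$, hence $b_1(H_1)$ and $b_1(H_2)$, are bounded. Second — the crucial step — I would show $b_1(G)$ is bounded. Given $g\in G$ and $h\in H_i$, put $h':=ghg^{-1}\in H_i$ (condition $(2)$); expanding $b(h')=b((g)(hg^{-1}))$ by the quasi‑cocycle identity, together with the elementary estimates $\|b(e)\|\le D$ and $\rho(g)b(g^{-1})=b(e)-b(g)-e_3$ with $\|e_3\|\le D$, and using $gh=h'g$, one rearranges to
\[
\bigl\| \rho(h')\,b(g)-b(g) \bigr\| \;\le\; \|b(h)\| + \|b(h')\| + 4D ,
\]
a bound uniform over $g\in G$ and $h\in H_i$. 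Because $G$ normalizes $H_i$, the element $h'=ghg^{-1}$ ranges over all of $H_i$ as $h$ does; applying $P_1$ shows that the vectors $b_1(g)$ ($g\in G$) are uniformly almost invariant under $H_1\cup H_2$. Since $H_1\cup H_2$ generates $\Gamma$ (condition $(1)$), and $\Gamma$ is finitely generated (the Shalom property is only defined for such groups, cf.\ Definition~\ref{def:shalom}), expressing a fixed finite generating set $F$ of $\Gamma$ as words of bounded length in $H_1\cup H_2$ makes the $b_1(g)$ uniformly almost invariant under $F$; property $(\mathrm{T}_B)$ then makes the relative Kazhdan constant $\mathcal{K}(\Gamma,\Gamma;F,\rho)$ strictly positive, which forces $\sup_{g\in G}\|b_1(g)\|<\infty$. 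Third, condition $(3)$ writes each $\gamma\in\Gamma$ as a product of at most $\ell$ elements of $G\cup H_1\cup H_2$; iterating the quasi‑cocycle identity for $b_1$ (with $\rho$ isometric) and using the bound for $b_1$ on $G\cup H_1\cup H_2$ then bounds $\|b_1(\gamma)\|$ for all $\gamma\in\Gamma$, which is what we wanted.

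I expect the only genuine obstacle to lie in the second step: one must verify that every error term produced by passing from a cocycle to a quasi‑cocycle stays uniformly bounded, and in particular that the almost‑invariance constant for $b_1(g)$ is truly independent of $g\in G$ — this is where the normalization condition $(2)$ is used essentially, since it is what turns "$h'$ ranges over a conjugate of $H_i$" into "$h'$ ranges over all of $H_i$". The reduction to $B_1$, the passage through a ucus norm, the Kazhdan‑constant estimate, and the closing bounded‑generation computation are all routine; note also that $(\mathrm{T}_B)$ is invoked only for the single representation $\rho$ at hand.
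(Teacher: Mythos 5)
Your proof is correct and follows essentially the same route as the paper: decompose $B = B^{\rho(\Gamma)} \oplus B'_{\rho(\Gamma)}$, project $b$ to $b_1$, use $(4')$ to bound $b_1$ on $H_1 \cup H_2$, use the normalization $(2)$ and the quasi-cocycle identity to make the vectors $b_1(g)$, $g \in G$, uniformly almost invariant under a finite generating set via $(1)$, invoke $(\mathrm{T}_B)$ to bound $b_1(G)$, and finish with the bounded generation $(3)$. The paper conjugates by $g^{-1}$ where you conjugate by $g$, and your bookkeeping yields $4D$ where the paper's cleaner manipulation (writing $h g = g(g^{-1}hg)$ directly) yields $2\Delta$, but these are purely cosmetic differences.
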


\begin{proof}
For simplicity, we assume that $B$ is ucus. 
Let $\rho$ be an arbitrary isometric representation of $\Gamma$ on $B$ 
and $b$ be an arbitrary quasi-$\rho$-cocycle. 
We decompose 
$B=$$B_0 \oplus B_1$$=B^{\rho (\Gamma)}\oplus B^{'}_{\rho (\Gamma)}$ and 
$b=b_0+b_1 $. 
We set $
\Delta= \sup_{g ,h \in \Gamma} \|  b_1(g) + \rho (g) b_1(h) -b_1(gh) \|
$. 
From condition $(4')$ of the Shalom property for $(\mathrm{FF}_B)$, there 
exists a positive number $C$ such that for any $h \in H_1 \cup H_2$, 
$
\| b_1(h) \| \leq C
$ holds.
By making use of condition $(2)$, one obtains the following inequality: 
for any $g\in G$ and 
$h \in H_1 \cup H_2$, 
\begin{align*}
  &\| \rho (h) b_1 (g) -b_1(g) \| \leq \|  b_1 (hg)  -b_1(g) \| +\|b_1(h)\| +\Delta \\
   \leq & \| b_1 (hg)- b_1(g) \| +\Delta+ C   = \| b_1 (gg^{-1}hg) - b_1(g) \| +\Delta+ C \\
   \leq & \| \rho (g)b_1(g^{-1}hg)\| +2\Delta+ C \leq 2(\Delta+C).
\end{align*} 
Let $S$ be any finite subset of $\Gamma$. 
From the inequality above and condition $(1)$,  
$\sup_{s \in S}\| \rho(s)\xi - \xi \|$ is bounded independently of the choice of 
$\xi \in b_1 (G)$.

Now suppose that $b_1(G)$ is not bounded. 
Then $\rho$ must admit almost invariant vectors in $B_1$, but 
it contradicts property $(\mathrm{T}_B)$ for $\Gamma$.  
Therefore $b_1(G)$ is bounded. 
Finally, one obtains  the boundedness of  
$b_1(\Gamma)$ through use of the bounded generation (condition $(3)$).
\end{proof}

\begin{proof}(\textit{Theorem \ref{thm:FFB}})
The conclusion follows from Theorem \ref{thm:FSLnZ}, Theorem 
\ref{thm:TtoFF}, and Proposition \ref{prop:bddgen}.
\end{proof}

The author does not know whether similar boundedness property holds for \textit{trivial} linear part.

\begin{rem}
Recently, Ozawa \cite{Oz} has strengthened property $(\mathrm{TT})$$(=$ property $(\mathrm{FF}_{\mathcal{H}})$$)$ and defined the concept of \textit{property} $(TTT)$. In \cite{Oz}, he has proved that $EL_2 (A_k) \ltimes A_k ^2 $$\triangleright A_k ^2$ has relative property $(\mathrm{TTT})$, where $A_k$ means $\mathbb{Z}[x_1 , \ldots , x_k]$. Hence from Proposition \ref{prop:bddgen}, it is established that the universal lattice $SL_3 (A_k)$ has $(\mathrm{FF}_{\mathcal{H}})/\mathrm{T}$ (this property is also written as \textit{property} $(TT)/T$).  Ozawa employs theory of the Fock Hilbert spaces and positive definite kernels to obtain the relative property $(\mathrm{TTT})$ in above. Therefore it may remain unknown whether $SL_3 (A_k)$ possesses property $(\mathrm{FF}_{\mathcal{C}})/\mathrm{T}$ or (more weakly,) property $(\mathrm{F}_{\mathcal{C}})$. Here $\mathcal{C}$ stands for $L^p$ $(2<p<\infty)$ or $[\mathcal{H}]$. 
\end{rem}

\section{\textbf{Applications}}\label{sec:appli}
\subsection{\textbf{Actions on the circle}}
Let $\mathrm{S}^1$ be the unit circle in $\mathbb{R}^2$ and identify $\mathrm{S}^1$ with $[-\pi , \pi )$. We denote by $\mathrm{Diff}_{+}(\mathrm{S}^1)$  the group of orientation preserving group diffeomorphisms of $\mathrm{S}^1$. 
\begin{defn}
Let $\alpha > 0$ be a real number. The group $\mathrm{Diff}^{1+\alpha}_{+}(\mathrm{S}^1)$ is defined as the class of all orientation preserving group diffeomorphisms $f$ of $\mathrm{S}^1$ such that $f'$ and $(f^{-1})'$ are H\"{o}lder continuous with exponent $\alpha$. Here a function $g$ on $\mathrm{S}^1$ is said to be \textit{H\"{o}lder continuous with exponent }$\alpha$ if 
$$
\| g \|_{\alpha}= \sup_{\theta_1 \ne \theta_2} \frac{|g(\theta_1)-g(\theta_2) |}{|\theta_1 -\theta_2|^{\alpha}} < \infty  
$$
holds. 
\end{defn}

A. Navas \cite{Nav1} has shown the following theorem: \textit{For any discrete group }$\Gamma$\textit{ with property }$(\mathrm{T})$\textit{, every homomorphism from }$\Gamma$\textit{ into }$\mathrm{Diff}^{1+\alpha}_{+} (\mathrm{S}^{1}) $\textit{ has finite image, for any }$\alpha > 1/2$\textit{.} He has also noted in \cite[Appendix]{Nav2} that his theorem can be extended to general $L^p$ cases. (See also \cite[\S1.b]{BFGM}.)

\begin{thm}\label{thm:Navas}$($\cite{Nav2}$)$
Let $1<p <\infty$ and $\Gamma$ be a discrete group with property $(\mathrm{F}_{L^p})$. Then for any $\alpha > 1/p$,  every homomorphism $\Gamma \to \mathrm{Diff}^{1+\alpha}_{+} (\mathrm{S}^{1}) $ has finite image.
\end{thm}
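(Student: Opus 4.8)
\textbf{Proof proposal for Theorem \ref{thm:Navas}.}

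The plan is to reduce the statement to the fixed point property $(\mathrm{F}_{L^p})$ by constructing, from a homomorphism $\Phi\colon\Gamma\to\mathrm{Diff}^{1+\alpha}_{+}(\mathrm{S}^1)$ with infinite image, a non-trivial affine isometric action of $\Gamma$ on an $L^p$ space. First I would recall that a countable group acting on $\mathrm{S}^1$ by $C^{1+\alpha}$ diffeomorphisms either preserves a probability measure on $\mathrm{S}^1$ or, after a change of coordinates, one may assume the action is minimal with no invariant probability measure; in the first case Navas' argument shows the image is finite (a group of $C^{1+\alpha}$ circle diffeomorphisms preserving a probability measure is, up to conjugacy, a group of rotations, and a finitely generated property $(\mathrm{T})$ subgroup of the rotation group is finite). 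So I would concentrate on the minimal case.

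In the minimal case the key analytic device is the family of measures $\mu_\theta$ obtained from the Lebesgue measure (or from the unique stationary-type measure), and the assignment $g\mapsto \big(\tfrac{d\,\Phi(g)_*\mu_\theta}{d\mu_\theta}\big)^{1/p}$; the point of the H\"older hypothesis $\alpha>1/p$ is precisely that the derivative cocycle has enough regularity for the associated map into $L^p(\mathrm{S}^1\times\mathrm{S}^1)$ (or into $L^p$ of the space of pairs/geodesics) to be a well-defined $\rho$-cocycle, where $\rho$ is the Koopman-type isometric representation $\rho(g)\xi=\xi\circ\Phi(g)^{-1}\cdot\big(\tfrac{d\,\Phi(g^{-1})_*\mu}{d\mu}\big)^{1/p}$. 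The H\"older exponent controls the $L^p$-norm of $c(g)=\rho(g)\mathbf{1}-\mathbf{1}$: one estimates $\|\Phi(g)' {}^{1/p}-1\|_{L^p}$ using that $\Phi(g)'$ is $\alpha$-H\"older together with the elementary inequality $|s^{1/p}-t^{1/p}|\lesssim |s-t|$ for $s,t$ near $1$, and the condition $\alpha p>1$ is what makes the resulting integral converge. This yields an affine isometric action $\alpha=\rho+c$ of $\Gamma$ on $L^p$.

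Then I would invoke property $(\mathrm{F}_{L^p})$: the action $\alpha$ has a global fixed point, equivalently $c$ is a coboundary, $c(g)=\xi-\rho(g)\xi$ for some $\xi\in L^p$. Unwinding, $\xi+\mathbf{1}$ (or $|\xi+\mathbf{1}|^p$, suitably interpreted as a density) gives a $\Phi(\Gamma)$-invariant finite measure on $\mathrm{S}^1$ absolutely continuous with respect to Lebesgue, contradicting the assumption that the action is minimal with no invariant probability measure — unless the cocycle was trivial to begin with, which forces $\Phi(g)'\equiv 1$ a.e., i.e. $\Phi(\Gamma)$ consists of rotations, reducing again to the amenable/rotation case handled above. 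Hence the image is finite.

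The main obstacle I expect is the careful verification that the candidate map $g\mapsto \big((\Phi(g^{-1}))'\big)^{1/p}-\mathbf{1}$ actually lands in $L^p$ and satisfies the cocycle identity with the correct (isometric) linear part: this is where the chain rule for the derivative cocycle, the change-of-variables formula, and the precise quantitative use of $\alpha>1/p$ all enter, and it is the technical heart of Navas' argument. The passage from ``$c$ is a coboundary'' back to ``invariant absolutely continuous probability measure, hence finiteness'' is then comparatively soft, using minimality and the fact that a non-zero invariant $L^{?}$-density on a minimal action forces the image into the rotation group. I would present the $L^p=\mathcal{H}$ case (where this is classical, going back to Reznikov and Navas) as the model and then indicate the modifications for general $p$, citing \cite{Nav2} and \cite[\S1.b]{BFGM} for the details of the cocycle construction.
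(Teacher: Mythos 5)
The paper does not re-prove this theorem; it simply cites Navas and records the key object, the Liouville $L^p$ cocycle
$$
c_p (g^{-1})(\theta_1,\theta_2)=\frac{[g'(\theta_1)g'(\theta_2)]^{1/p}}{\left| 2\sin{\left((g(\theta_1)-g(\theta_2))/2 \right)}  \right|^{2/p}} - \frac{1}{\left| 2 \sin{\left((\theta_1 -\theta_2 )/2 \right)} \right|^{2/p}}
$$
on $L^p(\mathrm{S}^1\times\mathrm{S}^1)$. Your overall strategy (build a cocycle from the derivative, invoke $(\mathrm{F}_{L^p})$ to get a coboundary, convert that into an invariant object, and then argue finiteness via the finite-abelianization consequence of $(\mathrm{T})$) is exactly the right shape and matches Navas' scheme.

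However, there is a genuine gap in the cocycle you actually write down. You name $L^p(\mathrm{S}^1\times\mathrm{S}^1)$ once in passing, but the object you compute with, $g\mapsto\big((\Phi(g^{-1}))'\big)^{1/p}-\mathbf{1}$, lives on $L^p(\mathrm{S}^1)$ (single circle), and that cocycle is \emph{trivially} a coboundary: since $\mathrm{S}^1$ has finite total measure, $\mathbf{1}\in L^p(\mathrm{S}^1)$, and $c(g)=\rho(g)\mathbf{1}-\mathbf{1}$ is literally $\eta-\rho(g)\eta$ with $\eta=-\mathbf{1}$. Property $(\mathrm{F}_{L^p})$ therefore yields no information from it, and no H\"older hypothesis is even needed for it to be $L^p$-valued. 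The reason Navas' construction is forced onto $\mathrm{S}^1\times\mathrm{S}^1$ with the singular Liouville weight $|2\sin((\theta_1-\theta_2)/2)|^{-2/p}$ is precisely that this reference density is \emph{not} in $L^p$ near the diagonal, so $c_p$ is not \emph{a priori} a coboundary; the hypothesis $\alpha>1/p$ is what tames the diagonal singularity so that $c_p(g)\in L^p(\mathrm{S}^1\times\mathrm{S}^1)$ for each $g$. Without this two-variable, singular-weight structure, the fixed point property has nothing to bite on. Relatedly, "$c$ is a coboundary $\Rightarrow$ invariant absolutely continuous probability measure" does not follow from the one-variable formulation; in the two-variable formulation one instead produces an invariant ``Liouville-type'' structure, and the deduction of finiteness from there (via the measurable conjugacy to rotations and finite abelianization) is the soft part you describe, but it must be run from the correct invariant object.
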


For the proof, Navas generalizes the argument in \cite{Nav1} by using the \textit{Liouville} $L^p$ \textit{cocycle} of $\mathrm{Diff}^{1+\alpha}_{+}(\mathrm{S}^1)$ on $L^p (\mathrm{S}^1 \times \mathrm{S}^1)$, that is, 
$$
c_p (g^{-1})(\theta_1,\theta_2)=\frac{[g'(\theta_1)g'(\theta_2)]^{1/p}}{\left| 2\sin{\left((g(\theta_1)-g(\theta_2))/2 \right)}  \right|^{2/p}} - \frac{1}{\left| 2 \sin{\left((\theta_1 -\theta_2 )/2 \right)} \right|^{2/p}}. 
$$

\begin{proof}(\textit{Corollary \ref{cor:circle}})
It is straightforward from Theorem \ref{thm:Navas} and Theorem \ref{thm:FSLnZ}.
\end{proof}

\subsection{\textbf{Bounded cohomology}}
We would like to refer to  Monod's book \cite{Mon} for details on bounded cohomology. Throughout this subsection, we let $\Gamma$ be a \textit{discrete} group.
\begin{defn}(\cite{Mon})\label{def:bddcoh}
Let $(B, \rho)$ be a \textit{Banach }$\Gamma$\textit{-module}, namely,  $B$ be a Banach space and $\rho$ be an isometric 
representation of $\Gamma$ on $B$. 
\begin{itemize}
 \item The \textit{bounded cohomology} $ H_b ^{\bullet} (\Gamma; B, \rho)$ of 
 $\Gamma$ with coefficients in $(B ,\rho)$ is defined as the cohomology of the 
 following cochain complex:
  $$
  0 \longrightarrow \ell^{\infty}(\Gamma , B)^{\rho (\Gamma )} 
  \longrightarrow \ell^{\infty}(\Gamma^2 , B)^{\rho (\Gamma )} 
  \longrightarrow \ell^{\infty}(\Gamma^3 , B)^{\rho (\Gamma )} 
  \longrightarrow \cdots
  $$
 \item The \textit{comparison map} is the collection of linear maps 
 $
  \Psi^{\bullet} \colon H_b ^{\bullet}(\Gamma; B, \rho) \to H^{\bullet}(\Gamma ;B, \rho),
 $
 where the maps above are naturally determined by the complex inclusion.
\end{itemize}
\end{defn}
We note that in general the comparison map is \textit{neither} injective \textit{nor} 
surjective. 
\begin{proof}(\textit{Corollary \ref{cor:bddcohom}})
From the same argument as one in 
\cite[Proposition 13.2.5]{Mon}, one can show the following isomorphism among vector spaces: 
$$
\mathrm{Ker}\Psi^2 \cong \{\mathrm{all\ quasi}\textrm{-}\rho\textrm{-}\mathrm{cocycles} \}/(\{\mathrm{all}\ \rho\textrm{-}\mathrm{cocycles} \}+ \{\mathrm{all\ bounded\ map\colon}\Gamma\to B\}).
$$ 
Hence the conclusion follows immediately.
\end{proof}

\appendix

\section{\textbf{Relative Kazhdan constant for property }$(\mathrm{T})$\textbf{ for} \textbf{uniformly bounded representations}}
We define the Kazhdan constant for relative property 
$(\mathrm{T}_{[\mathcal{H}]})$. For quantitative treatments, it is 
more convenient to focus on a Hilbert space $\mathcal{H}$. 
Therefore we define the extension of the Kazhdan constant 
in terms of uniformly bounded representations on $\mathcal{H}$.

\begin{defn}\label{def:extended const}
Let $ \Gamma \triangleright N$ be a pair of groups, and $F$ be a compact 
subset of $\Gamma$. For $M\geq1$, we define $\mathcal{A}_M$ as the class of 
all pairs $(\rho , \mathcal{H})$ with $| \rho | \leq M$. We define the 
\textit{generalized relative Kazhdan constant} 
\textit{for uniformly bounded representations} by 
\begin{equation*}
      \overline{\mathcal{K}}(\Gamma ,N;F;M)= \inf_{(\rho,\mathcal{H}) \in \mathcal{A}_M}      \ \inf_{\xi \in S\left( \mathcal{H}'_{\rho (N)}\right) }      \sup_{s \in F}   \| \rho (s) \xi -\xi  \|_{\mathcal{H}}.
\end{equation*}
\end{defn}

Our proof of Proposition \ref{prop:relconst} is a development of Kassabov's work \cite{Kas}, originally by M. 
Burger \cite{Bur} and Shalom \cite{Shal1}.  We make use of the following 
quantitative version of Lemma \ref{lem:dual}. 

\begin{lem}$($\cite[Proposition A.5]{BL}, modified$)$\label{lem:Bl}
Let $B$ be us. Suppose $0<\kappa <2$. Then for all $\xi,\eta \in S(B)$ 
with $\|\xi- \eta\| \leq \kappa$, the inequality 
$\|\xi^* -\eta^* \|_{*}\leq  2  r_{\| \cdot \|}(2 \kappa) / \kappa$ holds.
\end{lem}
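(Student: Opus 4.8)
The plan is to bound the dual norm $\|\xi^{*}-\eta^{*}\|_{*}$ by estimating the scalar $\langle z,\xi^{*}-\eta^{*}\rangle$ against an arbitrary $z\in B$ with $\|z\|\le 1$ and then taking the supremum over all such $z$. Here $\xi^{*}$ is the duality mapping of Lemma~\ref{lem:dual}, which is defined since $B$ is us; recall that $\xi^{*},\eta^{*}\in S(B^{*})$ and $\langle\xi,\xi^{*}\rangle=\langle\eta,\eta^{*}\rangle=1$. The elementary ingredient I would use is the one-sided support inequality: for any $u\in S(B)$, any $w\in B$ and any $s>0$, from $\mathrm{Re}\,\langle u+sw,u^{*}\rangle\le\|u+sw\|\,\|u^{*}\|_{*}=\|u+sw\|$ together with $\mathrm{Re}\,\langle u,u^{*}\rangle=1$ one gets
$$
\mathrm{Re}\,\langle w,u^{*}\rangle\le\frac{\|u+sw\|-1}{s}.
$$

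Next, fix $z\in B$ with $\|z\|\le1$; after multiplying $z$ by a unit scalar we may assume $\langle z,\xi^{*}-\eta^{*}\rangle$ is real and nonnegative, and it suffices to bound this quantity. Applying the support inequality to $(u,u^{*},w)=(\xi,\xi^{*},z)$ and to $(\eta,\eta^{*},-z)$ and adding yields
$$
\langle z,\xi^{*}-\eta^{*}\rangle\le\frac{\bigl(\|\xi+sz\|+\|\eta-sz\|\bigr)-2}{s}.
$$
The point of the argument is now to center everything at the midpoint rather than splitting off $\|\eta-\xi\|$ crudely: put $m=\tfrac12(\xi+\eta)$ and $h=\eta-\xi$, so that $\xi+sz=m+v$ and $\eta-sz=m-v$ with $v=sz-\tfrac12 h$. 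Then $\|m\|\le1$ and $\|v\|\le s+\tfrac12\|h\|\le s+\tfrac{\kappa}{2}$, so by the definition of the modulus of smoothness $\|m+v\|+\|m-v\|\le 2+2\,r_{\|\cdot\|}\!\bigl(s+\tfrac{\kappa}{2}\bigr)$, whence
$$
\langle z,\xi^{*}-\eta^{*}\rangle\le\frac{2\,r_{\|\cdot\|}\!\bigl(s+\tfrac{\kappa}{2}\bigr)}{s}.
$$

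Finally I would choose $s=\kappa$ (legitimate since $0<\kappa<2$), so the right-hand side becomes $2\,r_{\|\cdot\|}(3\kappa/2)/\kappa$; since $r_{\|\cdot\|}$ is nondecreasing (immediate from its definition) and $3\kappa/2\le 2\kappa$, this is at most $2\,r_{\|\cdot\|}(2\kappa)/\kappa$. Taking the supremum over all $z\in B$ with $\|z\|\le1$ gives $\|\xi^{*}-\eta^{*}\|_{*}\le 2\,r_{\|\cdot\|}(2\kappa)/\kappa$, as claimed. The only step requiring care is the midpoint reduction: the naive bound $\|\eta-sz\|\le\|\xi-sz\|+\|h\|$ would contribute a term of order $\|h\|/s$ that cannot be absorbed, whereas centering at $m$ keeps both vectors inside the unit ball so that the modulus of smoothness applies with an argument comparable to $\kappa$.
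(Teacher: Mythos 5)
Your proof is correct, and it cleanly fills in an argument the paper does not actually give: the paper only cites this lemma as ``[BL, Proposition A.5], modified'' without reproducing a proof, so there is no in-text argument to compare against. Your route — the one-sided support inequality $\mathrm{Re}\langle w,u^{*}\rangle\le(\|u+sw\|-1)/s$ followed by re-centering at the midpoint $m=(\xi+\eta)/2$ so that $\xi+sz$ and $\eta-sz$ are $m\pm v$ with $\|m\|\le1$, $\|v\|\le s+\kappa/2$ — is the standard way to bring the modulus of smoothness to bear, and all the steps (normalizing $z$ so that $\langle z,\xi^{*}-\eta^{*}\rangle\ge0$, the choice $s=\kappa$, monotonicity of $r$ so that $r(3\kappa/2)\le r(2\kappa)$, then taking the supremum over $\|z\|\le1$) are valid. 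The midpoint reduction is exactly the right trick: as you note, bounding $\|\eta-sz\|$ via the triangle inequality through $\xi$ would leave an un-absorbable $\|h\|/s$ term, whereas your decomposition makes the full sum $\|m+v\|+\|m-v\|-2$ directly eligible for the modulus of smoothness. In short, the argument is essentially the one behind the cited result in Benyamini--Lindenstrauss, with constants adjusted to match the statement in the paper; it is a correct, self-contained verification of what the paper outsources to the reference.
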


Let $\Gamma$ be a group and $M \geq 1$. For any 
$(\rho , \mathcal{H}) \in \mathcal{A}_M$, we define the norm 
$\| \cdot \| _{\rho}$ on $\mathcal{H}$ as the dual norm of the following norm 
$\| \cdot \|_{\rho*}$: for  $\phi \in \mathcal{H}^{*}$, 
$\| \phi \|_{\rho*} :=$$ \sup_{g \in \Gamma} \| \rho^{\dagger} (g) \phi \|_{\mathcal{H}^*}$. 
This norm $\| \cdot \| _{\rho}$ satisfies the following three properties: 
firstly, $\| \cdot \| _{\rho}$ is compatible with $\| \cdot \| _{\mathcal{H}}$ 
with the norm ratio $\leq M$. 
Secondly, $\rho$ is isometric with respect to $\| \cdot \| _{\rho}$. Thirdly, 
$(\mathcal{H},$ $\| \cdot \| _{\rho})$ is us. Indeed, thanks to Lemma \ref{lem:lindenst} one has that for any 
$\tau >0$, the inequality $r_{\| \cdot \|_{\rho}}(\tau) \leq \sqrt{1+M^2 \tau ^{2}} -1$
$\leq M^2 \tau ^2 /2$ holds. 
 
\begin{proof}(\textit{Proposition \ref{prop:relconst}})
We stick to the notation and the identifications in Subsection \ref{subsec:semid} (with $n=2$). 
Let $\varepsilon >0$. 
Suppose that there exists $(\rho , \mathcal {H}) \in \mathcal{A}_M$ 
such that $\rho$ admits a non-zero vector $\xi$ in $\mathcal{H}_{\rho(N)}^{'}$ 
which satisfies 
$\sup_{s\in F} \| \rho (s) \xi-\xi \|_{\mathcal{H}}  \leq  \varepsilon \|\xi \|_{\mathcal{H}}$. We may assume that $\mathcal{H}^{\rho(N)}=0$. 
For this $(\rho , \mathcal{H})$, we take the us norm 
$\| \cdot \|_{\rho}$ defined in the paragraph above. Thus by applying Lemma 
\ref{lem:Bl}, we can assume that 
there exists $\xi \in \mathcal{H}$ with $\| \xi \|_{\rho}=1$ such that 
$$ \sup_{s\in F} \| \rho (s) \xi-\xi \|_{\rho}  \leq M \varepsilon \mathrm{\ \ and\ \ }  \sup_{s\in F} \| \rho^{\dagger} (s) \xi^*-\xi^* \|_{\rho*}  \leq 4M^3 \varepsilon.
$$

Thanks to Dixmier's unitarization, we have an invertible operator 
$T\in \mathbb{B}(\mathcal{H})$ with 
$\| T  \|_{\mathbb{B}(\mathcal{H})} \| T^{-1} \|_{\mathbb{B}(\mathcal{H})} \leq M^2$ 
such that $ \pi := \mathrm{Ad}(T) \circ \rho \mid_{N} $ 
is unitary. Let $\hat{N}$ denote the Pontrjagin dual of $N$. 
By general theory of Fourier analysis, 
one  obtains a standard unital $*$-hom 
$\sigma : \mathrm{C}(\hat{N}) \to \mathbb{B}(\mathcal{H})$ from 
the unitary operators $\pi (N)$. 
Indeed, for $i \in \{1,2 \}$, let $z_i \in \mathrm{C}(\mathbb{T}^2)$ be the map  $t \mapsto e^{2\pi \sqrt{-1} t_i}$ ($t_i$ is the $i$-th component of $t\in \mathbb{T}^2$) and let $h_i \in F_1$ be $E_{i,3}(1)$ as in Subsection \ref{subsec:semid}. And we define $\sigma$ by setting $\sigma (z_i)=T\rho(h_i)T^{-1}$ for each $i$. 

Then from Riesz--Markov--Kakutani theorem, from this $\sigma$ 
one obtains 
the complexed-valued regular Borel measure 
$\mu $ on $\hat{N}$ satisfying the following: 
for any $f \in \mathrm{C}(\hat{N})$, 
$\int_{\hat{N}}f\,d\mu =  \left\langle T^{-1} \sigma (f) T \xi, {\xi}^* \right\rangle$. 
(We note that $T=I$ in our proof of Theorem \ref{thm:Tsemi}.) 
We take the Jordan decomposition of $\mathrm{Re}\mu=\mu_+ - \mu_-$. 
Here $\mu_+ \perp \mu_-$ (this means they are singular to each other) and both of them are positive regular Borel measure. 
Then the inequality $\mu_+ (\hat{N}) \geq 1$ holds.

For the proof of Proposition  \ref{prop:relconst}, first we discuss the case of that $k=0$. We take the following well-known decomposition of $\hat{N}=\mathbb{T}^2 \cong  \Bigl\{ \begin{scriptsize}
   \Bigl(
  \begin{array}{cc}
  t_1 \\
  t_2
  \end{array}
  \Bigr) \end{scriptsize}
  :t_1,\, t_2 \in \left[ -\frac{1}{2} , \frac{1}{2} \right) \Bigr\}$:
\begin{align*}
&\{ 0 \}, \ D_0 =\{ |t_1| \geq 1/4 \mathrm{\ or\ } |t_2| \geq 1/4 \},\\
&D_1 =\{ |t_2|\leq |t_1|<1/4  \mathrm{\ and\ } t_1 t_2 >0 \}, 
\ D_2 =\{ |t_1|< |t_2|<1/4  \mathrm{\ and\ } t_1 t_2 \geq 0 \}, \\
&D_3 =\{ |t_1|\leq |t_2|<1/4  \mathrm{\ and\ } t_1 t_2 <0 \}, 
\ D_4 =\{ |t_2|< |t_1|<1/4  \mathrm{\ and\ } t_1 t_2 \leq 0 \}.
\end{align*}
We consider the natural $SL_2(\mathbb{Z})$-action on $\mathbb{T}^2$ defined 
as follows: for any $g \in SL_2(\mathbb{Z})$, the action map $\hat{g}$ of $g$ 
$\colon t \mapsto \hat{g}t$ is the left multiplication of the matrix 
$ \hat{g} = (\,^{t}g)^{-1}$. This action naturally induces 
the $SL_2(\mathbb{Z})$-action on $\mathrm{C}(\mathbb{T}^2)$ as 
$\hat{g}f(t)$$=f(\hat{g}t)$. 
Then one can check the following equality: 
for any $g\in SL(2,\mathbb{Z})$ and any $f \in \mathrm{C}(\mathbb{T}^2)$, 
$
 \sigma (\hat{g} f)= T \rho (g) T^{-1} \sigma (f)  T \rho (g^{-1}) T^{-1}
$.  
With some calculation, one can also obtain the following two 
estimations: 
\begin{itemize}
  \item The inequality $\mu_+ (D_0) \leq 4M^7 \varepsilon^2$ holds.
  \item For any Borel subset $Z \subset \mathbb{T}^2$ and any 
  $g \in  F_2 (\subset SL_2(\mathbb{Z}))$ as in Subsection \ref{subsec:semid}, the inequality 
  $| \mu_+ (\hat{g}  Z)- \mu_+ (Z) | \leq 5M^6 \varepsilon$ holds.
\end{itemize}
Indeed, for instance, the former inequality follows from the argument below. For $i=1,2$, set $D_0 ^{i} = \mathrm{supp} \mu_+ \cap \{ |t_1| \geq 1/4 \} $$\subset D_0$. By approximating (pointwisely) $\chi_{D_0 ^{i}}$ by continuous functions and obtaining an associated projection $P \in \mathbb{B}(\mathcal{H})$, one can make estimate as follows: for each $i \in \{ 1,2\}$, 
\begin{align*}
& 2 \mu_+(D_0^{i})  \leq \left| \int_{D_0 ^{i}}\left | 1- z_i \right|^2  \, d\mu  \right|= \left|\int_{\mathbb{T}^2} \overline{\left( 1- z_i \right)} \chi_{D_0 ^{i}} ( 1- z_i )  \, d\mu  \right| \\
=&\left| \left\langle T^{-1} \sigma\left( 1-z_i \right)^* P \,\sigma\left(1-z_i\right) T\xi , \xi^* \right\rangle \right|  \\
=&\left| \left\langle T^{-1} \left(I- T\rho(h_i) T^{-1}\right)^* P  \left(I- T \rho(h_i) T^{-1}\right)T \xi , \xi^* \right\rangle \right| \\
=&\left| \left\langle T^{-1} \left(I- T \rho(h_i^{-1}) T^{-1}\right) P T \left(I- \rho(h_i)\right) \xi , \xi^* \right\rangle \right| ( \mathrm{Recall\ Ad}(T)\circ\rho \mid_{N} \mathrm{\ is\  unitary}), \\
=&\left| \left\langle \left(I- \rho(h_i^{-1})\right)T^{-1} P  T\left(I- \rho(h_i)\right) \xi , \xi^* \right\rangle \right| =\left| \left\langle T^{-1} P  T\left(\xi- \rho (h_i)\xi \right) , \xi^* - \rho^{\dagger}(h_i)\xi^* \right\rangle  \right| \\
\leq&  \left\| T^{-1} P  T\left(\xi- \rho(h_i)\xi\right)\right\|_{\rho} \left\| \xi^* - \rho^{\dagger}(h_i)(\xi^*) \right\|_{\rho *} \\
\leq & M \left\| T^{-1} P  T \right\|_{\mathbb{B}(\mathcal{H})} \left\|\xi- \rho (h_i)\xi\right\|_{\rho} \left\| \xi^* - \rho^{\dagger}(h_i)\xi^* \right\|_{\rho *} 
\leq   4M^{7}\varepsilon^2. 
\end{align*}
(In above, we remark that for $V \in \mathbb{B} (\mathcal{H})$, $V^*$ means the adjoint operator of $V$.)

Thanks to these two estimations, one can verify  
$\mu_+ (D_i) < 5M^6\varepsilon + 4M^7\varepsilon^2 $ for $1\leq i \leq4$. 
(Use for instance, $ \widehat{g_{1,-}}  (D_1 \cup D_2)$$\subset D_2 \cup D_0$, 
where $g_{1,-} =E_{1,2}(-1)$.)   
Hence the inequality  
$\mu_+ (\mathbb{T}^2 \setminus \{ 0 \}) \leq 20 M^6 \varepsilon + 20 M^7 \varepsilon^2 $ holds. 
If $\varepsilon \leq (21M^6)^{-1}$, then there must exist a non-zero $\rho (N)$-invariant vector. 
It is a contradiction.

For the general case, let us recall Kassabov's argument in \cite{Kas}. We 
identify $\widehat{A_k}$ with the set of all formal power series of variables 
$x_l^{-1}$ $(1 \leq l \leq k)$ over $\widehat{\mathbb{Z}} \cong \mathbb{T}$. 
Here the pairing is defined by
$$
\langle a x_1^{i_1} \cdots x_k^{i_k}  | \phi x_1^{-j_1} \cdots x_k^{-j_k}  \rangle  = \phi (a) \delta_{i_1 , j_1} \cdots \delta_{i_k , j_k}.
$$
We define the $valuation$ $v$ on $\widehat{A_k}$ as the minimum of the total 
 degrees of all terms. 
Here we naturally define $v(0)= +\infty$. 
We decompose $\hat{N} \setminus \{ 0 \} $$=\widehat{{A_k}^2} \setminus \{ 0 \}$ as follows: 
\begin{align*}
&A=\{ (\chi_1,\chi_2) : v(\chi_1) > v(\chi_2)>0 \}, 
\ B=\{ (\chi_1,\chi_2) : v(\chi_1) = v(\chi_2)>0 \}, \\
&C=\{ (\chi_1,\chi_2) : v(\chi_2) > v(\chi_1)>0 \}, 
\ D=\{ (\chi_1,\chi_2) : v(\chi_1)v(\chi_2)=0 \}. 
\end{align*}

Then from an argument similar to one in \cite{Kas}, 
we have the following inequalities: 
\begin{align*}
&\mu_+ (A)\leq  \mu_+ (D) + 5(k+1)M^6 \varepsilon , \\
&\mu_+(B) \leq \mu_+ (D) + 5kM^6 \varepsilon , \\
\mathrm{and}\ \  &\mu_+(C) \leq \mu_+ (D) + 5(k+1)M^6 \varepsilon.
\end{align*}
We naturally define the restriction map $\mathrm{res} \colon \hat{N} \to \hat{\mathbb{Z}}^2$ and obtain that $\mu_+ (D)= \mu_+ (\hat{N} \setminus \mathrm{res}^{-1} \{ 0 \} ) \leq  20 M^6 \varepsilon + 20 M^7 \varepsilon^2$. 
Finally, by combining these inequalities we conclude that
$$
1\leq \mu_+ (\hat{N})=\mu_+ (\hat{N} \setminus \{0 \})  \leq (15k+90)M^6 \varepsilon + 80 M^7 \varepsilon^2 . $$
(The middle equality in above follows from the assumption that $\mathcal{H}^{\rho (N)}=0$.) 
Hence in particular $\varepsilon $ must be more than $(15k+100)^{-1}M^{-6}$. 
\end{proof}

\bigskip

\begin{scriptsize}
\begin{center}
Graduate School of Mathematical Sciences, 
University of Tokyo, Komaba, Tokyo, 153-8914, Japan 

\noindent
e-mail: mimurac@ms.u-tokyo.ac.jp

\end{center}
\end{scriptsize}

\begin{thebibliography}{99}
\begin{footnotesize}
\bibitem{AK}A.G. Aksoy, and M.A. Khamsi, 
\textit{Nonstandard Methods in Fixed Point Theory}, Springer-Verlag, New York, 1990

\bibitem{BFGM}U. Bader, A. Furman, T. Gelander and N. Monod, 
\textit{Property (T) and rigidity for actions on Banach spaces}, 
Acta Math., \textbf{198} (2007), no.1, 57--105

\bibitem{BHV}B. Bekka, P. de la Harpe, and A. Valette, 
\textit{Kazhdan's property (T)}, 
New Mathematical Monographs, Vol. 11, Cambridge University Press, Cambridge, 2008


\bibitem{BL}Y. Benyamini and J. Lindenstrauss, 
\textit{Geometric nonlinear functional analysis}, Vol.\  1, Amer.\ Math.\ Sci, 
Providence, RI, 2000


\bibitem{BOz}N. Brown and N. Ozawa, 
\textit{$C^{\ast}$-algebras and Finite-Dimensional Approximations}, Graduate Studies in Mathematics, 88, Amer.\ Math.\ Sci, 
Providence, RI, 2008


\bibitem{Bur} M. Burger, \textit{Kazhdan constants for} $SL(3,\mathbb{Z})$, J.\  reine  angew.\ Math., \textbf{413} (1991), 36--67


\bibitem{BM} M. Burger and N. Monod, 
\textit{Continuous bounded cohomology and application to rigidity theory}, 
Geom.\ Funct.\ Anal., \textbf{12}(2) (2002), 219--280


\bibitem{CK} D. Carter and G. Keller, 
\textit{Bounded elementary generation of }$SL_n (\mathcal{O})$, 
Amer.\ J. Math., \textbf{105}(1983), 673--687


\bibitem{De} P. Delorme, \textit{1-cohomologie des repr\'esentations unitaires des groupes de Lie semisimples et r\'esolubles. Produits tensoriels continus de repr\'esentations}, Bull.\  Soc.\  Math.\ France  \textbf{105} (1977), 281--336


\bibitem{Dix} J. Dixmier, \textit{Les moyennes invariantes dans les semi-groups et leurs applications} Acta Sci.\  Math.\  Szeged.\  \textbf{12} (1950). Leopoldo Fej\'er et Frerico Riesz LXX anno natis dedicatus, Pars A, 213--227


\bibitem{EZ} \textit{M. Ershov} and \textit{A. Jaikin-Zapirain}, Property (T) for noncommutative universal lattices, 
Invent.\ Math. \textbf{179}(2) (2010),  303--347


\bibitem{Gr} M. Gromov, \textit{Random walks in random groups}, Geom.\ Funct.\ Anal., \textbf{13} (2003), 73--148


\bibitem{Gu} A. Guichardet, \textit{Sur la cohomologie des groupes topologiques. II}, Bull.\  Sci.\  Math., \textbf{96} (1972), 305--332


\bibitem{He} S. Heinrich, \textit{Ultraproduct in Banach space theory}, J.\  reine\  angew.\ Math., \textbf{313} (1980), 72--104

\bibitem{Kas}M. Kassabov, \textit{Universal lattices and unbounded rank expanders}, Invent.\ Math., \textbf{170} (2007), no. 2, 297--326



\bibitem{Kazh}D. A. Kazhdan, \textit{Connection of the dual space of a group 
with the structure of its closed subgroups}, Funct.\  Anal.\  Appl., \textbf{1} (1967), 63--65


\bibitem{Lac}H. E.  Lacey, \textit{The isometric theory of classical Banach spaces}, Springer-Verlag,   \textbf{88}, 1974

\bibitem{Laf1}V. Lafforgue, \textit{Un renforcement de la propri\'{e}t\'{e} (T)}, Duke  Math.\  J., \textbf{143} (2008), 559--602

\bibitem{Laf2}\textit{V. Lafforgue}, Propri\'{e}t\'{e} (T) renforc\'{e}e banachique et transformation de Fourier rapide, 
J.\ Topl.\ Anal. \textbf{1}(3) (2009), 191--206

\bibitem{Mon}N. Monod, \textit{Continuous bounded cohomology of locally compact groups}, Springer Lecture notes in Mathematics, 1758, 2001


\bibitem{MS}N. Monod and Y. Shalom, 
\textit{Cocycle superrigidity and bounded cohomology for negatively curved space}, 
J. Diffr.\ Geom., \textbf{67} (2004), 395--455

\bibitem{NS} \textit{A. Naor} and \textit{L. Silberman}, 
Poincar\'{e} inequalities, embeddings, and wild groups, 
preprint, arXiv:1005.4084

\bibitem{Nav1}A. Navas, 
\textit{Actions de groupes de Kazhdan sur le cercle}, 
Ann.\ Sci.\ \'{E}cole Norm.\ Sup., \textbf{35} (2002), 749--758

\bibitem{Nav2}A. Navas, 
\textit{Reduction of cocycles and groups of diffeomorphisms of the circle}, 
Bull.\ Belg.\ Math.\ Soc. Simon Steven \textbf{13} (2006), no. 2, 193--205


\bibitem{Oz}\textit{N. Ozawa}, 
\textit{Quasi-homomorphism rigidity with noncommutative targets}, preprint, to appear in J.\ reine angew.\ Math., arXiv:0911.3975


\bibitem{Shal1}Y. Shalom, \textit{Bounded generation and Kazhdan's property (T)}, Inst.\ Hautes \'{E}tudes Sci.\ Publ.\  Math., \textbf{90} (1999), 145--168


\bibitem{Shal2}Y. Shalom, \textit{Rigidity of commensurators and irreducible lattices}, Invent.\  Math., \textbf{141}(1) (2000), 1--54


\bibitem{Shal3}Y. Shalom, \textit{The algebraization of Kazhdan's property (T)}, International Congress of   Mathematicians.\ Vol. II,  Eur.\ Math.\ Soc., Z\"{u}rich (2006), 1283--1310


\bibitem{Sil}L. Silberman, \textit{Scaling limit}. Note available at Silberman's website


\bibitem{Sus}A. A. Suslin, \textit{On the structure of the special linear group over polynomial rings}, Math.\  USSR Izv., \textbf{11} (1977), 221--238


\bibitem{Vas}L. Vaserstein, \textit{Bounded reduction of invertible matrices over polynomial ring by addition operators}, preprint

\bibitem{Wit}D. Witte, \textit{Arithmetic groups of higher }$\mathbb{Q}$\textit{-rank cannot act on 1-manifolds}, Proc.\ Amer.\ Math.\ Soc., \textbf{122}(1994), 333--340


\bibitem{Yu}G. Yu, \textit{Hyperbolic groups admit proper affine isometric actions on ${\ell}^p$-spaces}, Geom.\ Funct.\ Anal., \textbf{15} (2005), 1144--1151
\end{footnotesize}
\end{thebibliography}
\end{document}